\theoremstyle{plain}
\newtheorem{theorem}{Theorem}[section]
\newtheorem{corollary}[theorem]{Corollary}
\newtheorem{proposition}[theorem]{Proposition}
\newtheorem{lemma}[theorem]{Lemma}
\theoremstyle{definition}
\newtheorem{definition}[theorem]{Definition}
\newtheorem{remark}[theorem]{Remark}
\newtheorem{question}[theorem]{Question}
\numberwithin{equation}{section}
\newcommand{\bc}{\mathbf{c}}
\newcommand{\bd}{\mathbf{d}}
\newcommand{\bdelta}{\boldsymbol{\delta}}
\newcommand{\be}{\mathbf{e}}
\newcommand{\bm}{\mathbf{m}}
\newcommand{\bn}{\mathbf{n}}
\newcommand{\bone}{\mathbf{1}}
\newcommand{\bt}{\mathbf{t}}
\newcommand{\bu}{\mathbf{u}}
\newcommand{\bv}{\mathbf{v}}
\newcommand{\bw}{\mathbf{w}}
\newcommand{\bx}{\mathbf{x}}
\newcommand{\bzero}{\mathbf{0}}
\newcommand{\C}{\mathbb{C}}
\newcommand{\cC}{\mathcal{C}}
\newcommand{\cP}{\mathcal{P}}
\newcommand{\cU}{\mathcal{U}}
\newcommand{\diag}{\mathop{\mathrm{diag}}\nolimits}
\newcommand{\Id}{\mathrm{Id}}
\newcommand{\one}[1]{\bone_{#1 \times #1}}
\newcommand{\R}{\mathbb{R}}
\newcommand{\rd}{\mathrm{d}}
\newcommand{\rk}{\mathop\mathrm{rank}}
\newcommand{\std}{\,\rd}
\newcommand{\stratumsymb}{\mathcal{S}}
\newcommand{\stratum}[1]{\stratumsymb_{#1}}
\newcommand{\Z}{\mathbb{Z}}
\newcommand{\up}[1][\pi]{\Sigma^\uparrow_{#1}}
\newcommand{\down}[1][\pi]{\Sigma^\downarrow_{#1}}
\newcommand{\ccup}[1][\pi]{\Theta^\uparrow_{#1}}
\newcommand{\ccdown}[1][\pi]{\Theta^\downarrow_{#1}}
\renewcommand{\subset}{\subseteq}
\renewcommand{\leq}{\leqslant}
\renewcommand{\geq}{\geqslant}
\newcommand{\editAB}[1]{{\color{black}#1}} %% for editing
\begin{document}
\title[Matrix positivity preservers in fixed dimension. II]%
{Matrix positivity preservers in fixed dimension. II: %
positive definiteness and\\
strict monotonicity of Schur function ratios}

\author{Alexander Belton}
\address[A.~Belton]{School of Engineering, Computing and Mathematics,
University of Plymouth, Plymouth, UK}
\email{\tt alexander.belton@plymouth.ac.uk}

\author{Dominique Guillot}
\address[D.~Guillot]{University of Delaware, Newark, DE, USA}
\email{\tt dguillot@udel.edu}

\author{Apoorva Khare}
\address[A.~Khare]{Indian Institute of Science;
Analysis and Probability Research Group; Bangalore, India}
\email{\tt khare@iisc.ac.in}

\author{Mihai Putinar}
\address[M.~Putinar]{University of California at Santa Barbara, CA,
USA and Newcastle University, Newcastle upon Tyne, UK} 
\email{\tt mputinar@math.ucsb.edu, mihai.putinar@ncl.ac.uk}

\date{\today}

\begin{abstract}
We continue the study of real polynomials acting entrywise on
matrices of \editAB{fixed dimension to preserve positive semidefiniteness,
together with the related} analysis of order properties of Schur polynomials.

\editAB{Previous work has shown that, given a real polynomial
with positive coefficients that is perturbed by adding a
higher-degree monomial, there exists a negative lower bound 
for the coefficient of the perturbation which characterises when the
perturbed polynomial remains positivity preserving.

We show here that, if the perturbation coefficient is strictly
greater than this bound then the transformed matrix becomes
positive definite given a simple genericity condition that can be readily verified.
We identity a slightly stronger \editAB{genericity} condition that
ensures positive definiteness occurs at the boundary.}

The analysis is \editAB{complemented} by computing the rank of
\editAB{the transformed matrix in terms of the location of the original
matrix in a} Schubert cell-type stratification \editAB{that we have
introduced and explored previously}.
The proofs require enhancing to strictness a Schur monotonicity
result of Khare and Tao, to show that the ratio of
\editAB{Schur polynomials is} strictly increasing along each coordinate
on the positive \editAB{orthant} and non-decreasing on its closure
\editAB{whenever the defining tuples satisfy a coordinate-wise
domination condition}.
\end{abstract}

\subjclass[2010]{15B48 (primary); 05E05, 15A24, 15A45, 26C05 (secondary)}

\maketitle

%\tableofcontents
%\settocdepth{section}

%{{{1 Section 1 - Introduction
\section{Background and setup}

The study of entrywise positivity preservers involves understanding the
structure of functions \editAB{of the form} $f : I \to \R$,
\editAB{for some complex} domain $I$, such that, if a complex Hermitian
matrix $A = ( a_{i j} )$ with entries in $I$ is positive
semidefinite then so is \editAB{the matrix}
$f[ A ] := \bigl( f( a_{i j} ) \bigr)$; \editAB{when $f$ is a power
function, so that $f( x ) \equiv x^\alpha$ for some $\alpha$,
we also use the Schur product notation $f[ A ] = A^{\circ \alpha}$.}

This subject has a rich history, beginning with the Schur
product theorem~\cite{Schur}, which implies that all
\editAB{functions represented by} power
series with non-negative coefficients preserve positivity
\editAB{in this sense for square matrices of arbitrary size}. 
The converse, that there are no other preservers in all dimensions,
was first shown by Schoenberg~\cite{Schoenberg42} for
continuous functions defined on $I = [-1,1]$, and subsequently by several
others. \editAB{For domains of the form}
$I = (-\rho,\rho)$, with $0 < \rho \leq \infty$, we
mention Rudin~\cite{Rudin59} and recent work~\cite{BGKP-hankel} for
variants with greatly reduced test sets in each dimension. \editAB{The}
book \cite{K} \editAB{contains} additional details and references.

The situation is more involved in \editAB{a} fixed dimension $N$,
where \editAB{the} complete classification of the entrywise positivity preservers
remains open to date even for \editAB{$3 \times 3$ matrices, that is, when}
$N = 3$. For matrices with positive entries, the real powers
which are entrywise \editAB{positivity} preservers were classified by
FitzGerald and Horn in~\cite{FitzHorn}:
\editAB{these are the non-negative integers and
all real powers beyond the threshold $N - 2$, that is, elements of}
the set $\Z_+ \cup [ N - 2, \infty )$.
If one considers polynomial preservers instead then
\editAB{no such preservers were known in fixed dimension
beyond the case of non-negative coefficients, until the previous part of}
this work~\cite{BGKP-fixeddim}, \editAB{which was} subsequently
extended by Khare \editAB{and} Tao~\cite{KT}.

\subsection{Polynomial preservers yield positive definite matrices}

We lay out \editAB{here} in a condensed form the \editAB{key results}
from \cite{BGKP-fixeddim,KT} \editAB{that are relevant to our work here},
and  \editAB{provide from this context} the first novel observation of this paper.

By a \editAB{result} of Loewner (see \cite{horn}), if $\rho > 0$
\editAB{and the smooth function $f : ( 0, \rho ) \to \R$ is such that
$f[ A ]$ is positive semidefinite for any positive semidefinite}
$A \in ( 0, \rho )^{N \times N}$, then $f$, $f$', \ldots, $f^{( N - 1 )}$ are
non-negative on $( 0, \rho )$, but \editAB{this} need not hold for any higher
derivative of~$f$. \editAB{More generally}, if~$f$ is a real polynomial
preserver with exactly $N + 1$ monomial terms, then 
\editAB{the first}~$N$ non-zero Maclaurin coefficients of $f$ are
positive. The question of \editAB{whether the leading coefficient could be
negative} was eventually answered \editAB{positively}
in~\cite{BGKP-fixeddim,KT} 
\editAB{with an explicit sharp negative lower bound}
in several slightly different settings. \editAB{We begin here}
fixing some notation, \editAB{introducing these settings and then
providing the common bound which holds for} all of them.

\begin{definition}\label{Dsetup}
Given a domain $I \subset \C$  and \editAB{positive integers $N$ and $k$ with}
$k \leq N$, denote by $\cP_N^k( I )$ the set of positive semidefinite
$N \times N$ matrices with entries in $I$ and rank at most $k$;
\editAB{recall that any positive semidefinite complex matrix is
automatically Hermitian. For convenience, we} also set
$\cP_N( I ) := \cP_N^N ( I )$.
The Loewner \editAB{partial} order on $N \times N$ Hermitian
matrices is \editAB{defined by setting} $A \geq B$ if \editAB{and only if}
$A - B \in \cP_N( \C )$.

\editAB{For any $\rho > 0$,} let $\overline{D}( 0, \rho )$ denote the closed disc
in~$\C$ with center~$0$ and radius $\rho$. \editAB{We are interested in the
entrywise action of the function
\begin{equation}\label{Ef}
f ( z ) := \sum_{j = 0}^{N - 1} c_j z^{n_j}  + c' z^M = h( z ) + c' z^M
\end{equation}
on some suitable set of test} matrices
$\cP_0 \subset \cP_N\bigl( \overline{D}\bigl( ( 0, \rho ) \bigr)$,
\editAB{where the number of terms $N$ is a positive integer,
the coefficients $c_0$, \ldots, $c_{N - 1}$ and $c'$ are real numbers and
the powers are arranged in increasing order:}
$n_0 < n_1 < \cdots < n_{N - 1} < M$.
\editAB{The test set $\cP_0$ is may depend on the form of $f$, as follows.}
\begin{enumerate}
\item \editAB{The minimal subset} $\cP_0 = \cP_N^1\bigl( ( 0, \rho ) \bigr)$,
for arbitrary real powers \editAB{$n_0$, \ldots, $n_{N - 1}$ and~$M$.}
\item \editAB{A subset $\cP_0$ such that}
$\cP_N^1\bigl( ( 0, \rho ) \bigr) \subset \cP_0 \subset \cP_N^1\bigl( [ 0, \rho ] \bigr)$,
for non-negative powers \editAB{$n_0$, \ldots, $n_{N - 1}$ and $M$}. Here
\editAB{and elsewhere} we set $0^0 := 1$.
\item \editAB{A subset $\cP_0$ such that}
$\cP_N^1\bigl( (0,\rho) \bigr) \subset \cP_0 \subset \cP_N( [ 0, \rho ] \bigr)$,
where \editAB{$n_0$, \ldots, $n_{N - 1}$ and $M$ are elements of the set}
$\Z_+ \cup [ N - 1, \infty )$. \footnote{\editAB{As mentioned previously},
it is known~\cite{FitzHorn}
that all real powers $\alpha \geq N - 2$ preserve positivity \editAB{when
acting entrywise} on $\cP_N\bigl( [ 0, \rho ] \bigr)$,
but we need more for our purposes, namely,
powers \editAB{that preserve the Loewner order} on $\cP_0$:
if $A$, $B \in \cP_0$ with $A - B \in \cP_N\bigl( [ 0, \rho ] \bigr)$
then $A^{\circ \alpha} - B^{\circ \alpha} \in \cP_N( \R )$.
\editAB{See \cite[Theorem~5.1(ii)]{Hiai}.}}

\item \editAB{A subset $\cP_0$ such that}
$\cP_N^1\bigl( ( 0, \rho ) \bigr) \subset \cP_0 \subset %
\cP_N\bigl( \overline{D}( 0, \rho ) \bigr)$, where
\editAB{$n_0$, \ldots, $n_{N - 1}$} are
successive non-negative integers
\editAB{(so that $n_j = n_0 + j$ for $j = 0$, \ldots, $N - 1$)}
and $M$ is an integer.
\end{enumerate}
\end{definition}

\editAB{In the complex case (4) above, if the 
polynomial $f$ has the form (\ref{Ef}) with coefficients
$c_0$, \ldots, $c_{N - 1} > 0$
and $c_M < 0$, and the powers $n_0$, \ldots, $n_{N - 1}$ are not
successive non-negative integers then $f$ does not preserve positive
semidefiniteness entrywise on $\cP_0$ for some $M > n_{N - 1}$: see~\cite[Proposition~7.1]{KT}.}

Having \editAB{described these possibilities}, we recall the 
\editAB{corresponding} classification of
entrywise polynomial preservers.

\begin{theorem}[{\cite[Theorem~1.1]{BGKP-fixeddim} and
\cite[Section~1.3]{KT}}]\label{Tthreshold}
\editAB{Let $f$ be as in (\ref{Ef}), let $\rho > 0$ and set
\begin{equation}\label{DC}
\cC = \cC( f, \rho ) := %
\sum_{j = 0}^{N - 1} \frac{V( \bn_j )^2}{V( \bn )^2} %
\frac{\rho^{M - n_j}}{c_j},
\end{equation}
where the Vandermonde determinant
\[
V( \bm ) := \prod_{1 \leq k < l \leq N} ( m_l - m_k ) \qquad %
\textrm{for any } \bm = ( m_1, \ldots, m_N )
\]
and the $N$-tuples
\begin{equation}\label{Erankone}
\bn_j := ( n_0, \ldots, \widehat{n_j}, \ldots, n_{N - 1}, M ) %
\qquad \textrm{and} \qquad \bn := ( n_0, \ldots, n_{N - 1} ),
\end{equation}
where $\widehat{n_j}$ indicates that $n_j$ is omitted.
Given a test set $\cP_0$ according to Definition~\ref{Dsetup},
the following are equivalent.}
\begin{enumerate}
\item The map $f[ - ]$ preserves positivity on $\cP_0$.
\item The coefficients of $f$ satisfy either
(a)~$c_0$, \ldots, $c_{N - 1}$, $c' \geq 0$, or
(b)~$c_0$, \ldots, $c_{N - 1} > 0$ and
$c' \geq -\cC^{-1}$.
\item The map $f[ - ]$ preserves positivity on the \editAB{subset of}
Hankel matrices in $\cP_N^1\bigl( ( 0, \rho ) \bigr)$.
\end{enumerate}
\end{theorem}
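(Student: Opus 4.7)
The plan is to establish the cyclic chain of implications $(1) \Rightarrow (3) \Rightarrow (2) \Rightarrow (1)$. The first implication is immediate, since the rank-one Hankel test matrices in $\cP_N^1\bigl( ( 0, \rho ) \bigr)$ are contained in every admissible $\cP_0$ from Definition~\ref{Dsetup}.

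For $(3) \Rightarrow (2)$, I would test on the two-parameter family of rank-one Hankel matrices $\bu \bu^T$ with $\bu = ( v, v r, \ldots, v r^{N - 1} )^T$, where $v, r > 0$ are chosen so that every entry lies in $( 0, \rho )$. Since $f[ \bu \bu^T ] = \sum_{j = 0}^{N - 1} c_j ( \bu^{\circ n_j} )( \bu^{\circ n_j} )^T + c' ( \bu^{\circ M} )( \bu^{\circ M} )^T$ is a sum of $N + 1$ rank-one pieces, Cauchy--Binet expresses $\det f[ \bu \bu^T ]$ as a sum over the $N + 1$ choices of which summand to omit, each contribution being a coefficient product times the square of a generalized Vandermonde determinant. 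Every such determinant factors as $V( \bu ) \cdot s_\lambda( \bu )$, where $\lambda$ is the partition associated with one of the $N$-tuples $\bn_j$ or $\bn$ of~(\ref{Erankone}). Requiring non-negativity of $\det f[ \bu \bu^T ]$ for all admissible $( v, r )$ and passing to the degenerating limit $r \to 1$ with $v^2 \to \rho$---which invokes the principal specialisation $s_\lambda( 1, \ldots, 1 ) = V( \text{index tuple} ) / V( 0, 1, \ldots, N - 1 )$---yields the bound $c' \geq -\cC^{-1}$ together with $c_0, \ldots, c_{N - 1} > 0$, unless some $c_j$ vanishes, in which case the analysis forces all coefficients to be non-negative (case~(2a)).

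For $(2) \Rightarrow (1)$, case~(2a) is a direct application of the Schur product theorem monomial by monomial. For case~(2b), one first reverses the Cauchy--Binet calculation above to confirm that $f[ \bu \bu^T ]$ is positive semidefinite on rank-one test matrices: strict positivity of the $c_j$ furnishes positive leading principal minors, while the lower bound $c' \geq -\cC^{-1}$ together with the sharp limit already identified gives $\det f[ \bu \bu^T ] \geq 0$. Extending to the larger test sets proceeds case by case: no extension is needed for Definition~\ref{Dsetup}(1)--(2); for Definition~\ref{Dsetup}(3) one invokes the Loewner monotonicity of the Schur power map for exponents in $\Z_+ \cup [ N - 1, \infty )$ recorded in the footnote, combined with a rank-reducing telescoping of each $A \in \cP_N( [ 0, \rho ] )$ into rank-one contributions; for Definition~\ref{Dsetup}(4), the hypothesis of successive integer exponents permits a direct Cauchy--Binet argument at arbitrary rank, once one passes to a suitable diagonal gauge.

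The principal obstacle is the sufficiency direction $(2) \Rightarrow (1)$ beyond the rank-one test set, where $f[ - ]$ must be controlled on genuinely higher-rank matrices across the four distinct settings of Definition~\ref{Dsetup}. The unifying tool throughout is the identification of generalized Vandermonde determinants with Schur polynomials, which both furnishes the sharp constant $\cC$ and underpins the Loewner monotonicity invoked in the higher-rank reductions.
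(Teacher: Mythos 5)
This theorem is not proved in the present paper at all: it is quoted from \cite{BGKP-fixeddim} and \cite{KT}, and the paper only records the ingredients those proofs need. Measured against those actual arguments, your sketch has two genuine gaps, both in the sufficiency direction $(2)\Rightarrow(1)$. For the rank-one case, the Cauchy--Binet expansion reduces positivity of $\det f[\bu\bu^*]$ (when $c'<0$) to the inequality $\sum_{j=0}^{N-1}\det(\bu^{\circ\bn_j})^2/\bigl(c_j\det(\bu^{\circ\bn})^2\bigr)\leq\cC$ for \emph{every} $\bu\in(0,\sqrt\rho)^N$ with distinct entries, not just along the geometric family $u_i=vr^{i-1}$. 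Your limit $r\to1$, $v^2\to\rho$ identifies $\cC$ as the value at the degenerate corner, but it does not show that this corner value dominates the quantity over the whole cube; that step is precisely the Schur monotonicity lemma (Theorem~\ref{TSchurmon}), which this paper explicitly flags as the crucial ingredient in \cite{KT} (the alternative in \cite{BGKP-fixeddim} is an inductive proof of the linear matrix inequality $A^{\circ M}\leq\cC\,h[A]$). Saying ``the sharp limit already identified gives $\det f[\bu\bu^T]\geq0$'' conflates the two. Relatedly, ``strict positivity of the $c_j$ furnishes positive leading principal minors'' is not correct: every principal minor of size $k<N$ also contains the negative coefficient $c'$, so each size would need its own bound; the clean route is to prove the rank-one LMI $\bu^{\circ M}(\bu^{\circ M})^*\leq\cC\,h[\bu\bu^*]$, which settles all minors at once.

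The second gap is the passage from rank one to the full test sets of Definition~\ref{Dsetup}(3)--(4), which is the genuinely hard part. ``Rank-reducing telescoping of $A$ into rank-one contributions'' does not work as stated, because $f[-]$ is not additive, and there is no ``direct Cauchy--Binet argument at arbitrary rank'' for $\det f[A]$ (nor would nonnegativity of one determinant suffice); the ``diagonal gauge'' remark does not correspond to any step in the known proofs. What the proofs actually do is induct: choose $\bu$ from a row of $A$ so that $A-\bu\bu^*$ is positive semidefinite with a zero row, and combine the FitzGerald--Horn integral identity (quoted in this paper as \cite[(3.16)]{BGKP-fixeddim}) with Loewner monotonicity of entrywise powers in $\Z_+\cup[N-1,\infty)$; this is exactly the mechanism reused in the proof of Theorem~\ref{Tlmi}(2) here. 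Your implications $(1)\Rightarrow(3)$ and $(3)\Rightarrow(2)$ are essentially right and are indeed the easy half, though deducing $c_0,\ldots,c_{N-1}>0$ when $c'<0$ requires the Horn--Loewner-type analysis of lower-order minors rather than the top determinant alone.
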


\editAB{Fundamentally, our work involves} the constructive analysis of the
largest \editAB{eigenvalue for linear pencils} of Hermitian matrices
\editAB{of the form}
\[
h[ A ] - \lambda A^{\circ M},
\]
where $h$ is \editAB{the unperturbed} polynomial adapted to the \editAB{size}
of \editAB{the positive matrix} $A$ and the power~$M$ exceeds the degree of~$h$.
One of the results we show in the present work is an
\editAB{enhancement of previous work to show the} positive definiteness of
$f[ A ]$ for generic $A$:

\begin{theorem}\label{T1}
\editAB{Let $f$, $\cC$ and $\cP_0$ be} as in Theorem~\ref{Tthreshold},
\editAB{with $c_0$, \ldots, $c_{N - 1} > 0$ and $c' > \cC^{-1}$}. If all
\editAB{of the} rows of \editAB{$A \in \cP_0$} are distinct and
\editAB{$n_0 = 0$ when $A$ has a zero row} then $f[ A ]$ is positive definite.
\end{theorem}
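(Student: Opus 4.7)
The plan is to split $f[ A ]$ into two positive-semidefinite pieces whose kernels meet only at zero. With $\epsilon := c' + \cC^{-1} > 0$ and the boundary polynomial $f_0( z ) := h( z ) - \cC^{-1} z^M$, we have
\[
f[ A ] = f_0[ A ] + \epsilon A^{\circ M}.
\]
Theorem~\ref{Tthreshold} applied to $f_0$ gives $f_0[ A ] \in \cP_N( \R )$, and in every case of Definition~\ref{Dsetup} one also has $A^{\circ M} \in \cP_N( \R )$: in cases~(1) and~(2) because $A$ is rank one, and in cases~(3) and~(4) because the permitted values of $M$ preserve positivity entrywise on $\cP_0$. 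The identity $\bv^* f[ A ] \bv = \bv^* f_0[ A ] \bv + \epsilon \bv^* A^{\circ M} \bv$ then exhibits a sum of non-negative terms, so any $\bv \in \ker f[ A ]$ lies in $\ker f_0[ A ] \cap \ker A^{\circ M}$, and the theorem reduces to showing that this intersection is trivial under the hypotheses on $A$.

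The core is the rank-one case $A = \bu \bu^*$. Setting $\bw_j := \bu^{\circ n_j}$ for $0 \leq j \leq N - 1$ and $\bw := \bu^{\circ M}$, we have $A^{\circ M} = \bw \bw^*$ with kernel $\bw^\perp$, and
\[
f_0[ A ] = P - \cC^{-1} \bw \bw^*, \qquad P := \sum_{j = 0}^{N - 1} c_j \bw_j \bw_j^*.
\]
On $\bw^\perp$ the rank-one correction vanishes and $f_0[ A ]$ coincides with $P$, so it suffices to check that $P$ is strictly positive, equivalently that the generalized Vandermonde $[ \bw_0 \mid \cdots \mid \bw_{N - 1} ]$ is non-singular. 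When $\bu$ has distinct positive entries this is classical; when one coordinate $u_i$ vanishes, the hypothesis $n_0 = 0$ is precisely what averts an all-zero row in the Vandermonde and preserves its invertibility (otherwise $\be_i$ would sit in both kernels and sabotage the argument).

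For the higher-rank cases~(3) and~(4) the same decomposition reduces the question to triviality of $\ker f_0[ A ] \cap \ker A^{\circ M}$, though $A^{\circ M}$ is no longer rank one. I would factor $A = B B^*$ and exploit the tensor-type expansion
\[
A^{\circ n} = \sum_{ \mathbf{k} } ( B_{ \cdot, k_1 } \circ \cdots \circ B_{ \cdot, k_n } ) ( B_{ \cdot, k_1 } \circ \cdots \circ B_{ \cdot, k_n } )^*,
\]
which displays $A^{\circ M}$ and each $A^{\circ n_j}$ as sums of rank-one Hadamard-product outer products; the distinct-rows hypothesis on $A$ carries over to the rows of $B$, and the rank-one analysis can then be propagated summand by summand. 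I expect the main obstacle to lie precisely in this transfer: one must lift the clean Vandermonde non-singularity from the rank-one setting to the combinatorial Hadamard-product setting without disturbing the common-kernel bookkeeping, while the $n_0 = 0$ clause continues to govern the degenerate case of a zero row of $A$. The strict Schur ratio monotonicity announced in the abstract is not required for the present theorem, since the positive margin $\epsilon A^{\circ M}$ already supplies the necessary slack; it is instead what will underpin the companion result on positive definiteness at the boundary $c' = -\cC^{-1}$.
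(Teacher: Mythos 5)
Your reduction is sound as far as it goes: writing $f[A] = g[A] + \epsilon A^{\circ M}$ with $\epsilon = c' + \cC^{-1} > 0$, invoking Theorem~\ref{Tthreshold} for the boundary polynomial, and reducing to triviality of $\ker g[A] \cap \ker A^{\circ M}$ is a legitimate variant of the paper's pencil argument (Corollary~\ref{Clmi} plus Lemma~\ref{Llmi}); indeed, on $\ker A^{\circ M}$ the forms of $g[A]$ and $h[A]$ agree, so everything comes down to positive definiteness of $h[A]$. Your rank-one analysis, via non-singularity of the generalized Vandermonde matrix $[\bu^{\circ n_0} \mid \cdots \mid \bu^{\circ n_{N-1}}]$ with the $n_0 = 0$ clause handling a zero entry, is exactly the paper's treatment of settings (1) and (2) of Definition~\ref{Dsetup}, and your observation that the strict Schur monotonicity is only needed for the boundary case $c' = -\cC^{-1}$ is also accurate.

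The genuine gap is in the higher-rank settings (3) and (4), which you explicitly leave as a sketch. The proposed expansion of $A^{\circ n}$ into outer products of Hadamard products of columns of a factor $B$ only makes sense for integer exponents, whereas setting (3) allows arbitrary real powers in $[N-1,\infty)$; and even for integer powers, "propagating the rank-one analysis summand by summand" does not work, because no individual summand is positive definite -- what is needed is that the vectors arising across all $j$ and all multi-indices together span $\C^N$, which is a global statement (essentially the determinantal identity of Theorem~\ref{Tjacobi-trudi}) and not something the distinct-rows hypothesis hands you term by term. The paper closes this gap with a different idea: a rank-one domination lemma. If the rows of $A$ are distinct, then there exists $\bu$ with \emph{distinct} entries (zero exactly where $A$ has a zero row) such that $A \geq \bu\bu^*$ -- in the complex setting this follows from an elementary generic choice of $\bv$ with $\bw = A\overline{\bv}$ and Proposition~\ref{Ppos-constant}, while in the real non-negative setting, where $\bu$ must also have non-negative entries, it requires the Perron--Frobenius argument of Theorem~\ref{Tdom}. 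One then uses Loewner monotonicity of the admissible entrywise powers (integers, or real powers $\geq N-1$, which is precisely why setting (3) restricts to $\Z_+ \cup [N-1,\infty)$) to get $h[A] \geq h[\bu\bu^*]$, reducing to your rank-one Vandermonde case. Without this domination-plus-monotonicity step, or a worked-out substitute, the proof of the theorem in settings (3) and (4) is incomplete.
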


This is stated and proved in Theorems~\ref{Tlmi} and~\ref{Tlmi2} below.

\editAB{To establish these two theorems, we rely on a lower-bound result, that if
a positive semidefinite matrix $A$ has distinct rows then it has a rank-one lower
bound $\bu$, such that~$A \geq \bu \bu^T$, and $\bu$ may be chosen to have
distinct entries. In the complex setting this is elementary, but if $A$
has non-negative entries and $\bu$ is required to as well then we
establish the existence of such a lower bound using Perron--Frobenius theory.
This result, Theorem~\ref{Tdom}, may be of independent interest.}

\subsection{Strict monotonicity of Schur polynomial ratios}

Next we switch tracks and focus on Schur polynomials from an
\editAB{order} perspective. \editAB{While this may seem a non
\editAB{sequitur}, it is not: the proofs of Theorem~\ref{Tthreshold} in
\cite{KT} rely} crucially on
\begin{itemize}
\item[(i)] a combinatorial determinant formula involving Schur polynomials
(Theorem~\ref{Tjacobi-trudi}) and
\item[(ii)] a Schur monotonicity \editAB{lemma (Section~\ref{SSstrict}).}
\end{itemize}
\editAB{We will now introduce some notation to facilitate the statement
of the monotonicity lemma.}

\editAB{For any set of real numbers $S$, the collection of $N$-tuples
of distinct elements of~$S$ is denoted by $S^N_{\neq}$
and its subset of $N$-tuples with entries in increasing order
is denoted by~$S^N_<$.
Given vectors $\bu = ( u_i )_{i = 1}^N \in ( 0, \infty )^N$ and
$\bm = ( m_j )_{j = 1}^N \in \R^N$, we let the matrix
$\bu^{\circ \bm} := ( u_i^{m_j} )_{i, j = 1}^N$.} 

\begin{theorem}[Schur monotonicity lemma,
{\editAB{\cite[Corollary~8.7 and Proposition~8.1]{KT}}}]\label{TSchurmon}
\editAB{Let $\bm$, $\bn \in \R^N_<$ be such that $m_j \leq n_j$ for all $j$,
where $N \geq 1$.}
The symmetric function 
\[ 
f : ( 0, \infty )^N_{\neq} \to \R; \ %
\bu \mapsto \frac{\det \bu^{\circ \bn}}{\det \bu^{\circ \bm}}
\]
is non-decreasing in each coordinate. If, moreover,
\editAB{the entries of the vectors $\bm$ and $\bn$ are}
non-negative integers then $f$ \editAB{extends uniquely
to the whole of $( 0, \infty )^N$
and coordinate-wise monotonicity holds \editAB{everywhere}}.
\end{theorem}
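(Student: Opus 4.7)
The plan is to first establish the result for non-negative integer exponents via Schur polynomial machinery and then transfer to general real exponents by a scaling and continuity argument. By symmetry of~$f$ it suffices to prove monotonicity in a single coordinate, say $u_N$; and by the shift-invariance $( \bm, \bn ) \mapsto ( \bm + c \bone, \bn + c \bone )$, which leaves~$f$ unchanged (the common factor $\prod_i u_i^c$ cancels), we may always assume $m_1 \geq 0$.

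In the case of non-negative integer exponents, reversing the index order and subtracting the staircase $( N - 1, N - 2, \ldots, 0 )$ transforms $\bn$ and $\bm$ into partitions $\lambda$ and~$\mu$ with $\mu \subset \lambda$. The Weyl bialternant formula then identifies
\[
f( \bu ) = \frac{\det \bu^{\circ \bn}}{\det \bu^{\circ \bm}} = \frac{s_\lambda( \bu )}{s_\mu( \bu )}.
\]
Via the semistandard Young tableau expansion $s_\lambda( \bu ) = \sum_T \prod_i u_i^{c_i( T )}$, where $c_i( T )$ counts the entries equal to~$i$ in~$T$, one obtains
\[
u_k \partial_{u_k} \log s_\lambda( \bu ) = \langle c_k \rangle_\lambda,
\]
with $\langle \cdot \rangle_\lambda$ denoting expectation under the Schur probability measure $T \mapsto \prod_i u_i^{c_i( T )} / s_\lambda( \bu )$ on SSYT of shape~$\lambda$. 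Hence $u_k \partial_{u_k} \log f( \bu ) = \langle c_k \rangle_\lambda - \langle c_k \rangle_\mu$, and monotonicity reduces to the stochastic-domination inequality $\langle c_k \rangle_\lambda \geq \langle c_k \rangle_\mu$ whenever $\mu \subset \lambda$. I would approach this by reducing, along a chain of partitions differing by one box at a time, to the case $|\lambda / \mu| = 1$ and constructing an explicit weight-preserving injection between SSYT of the two shapes.

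To pass to arbitrary real exponents, approximate $\bm, \bn \in \R^N_<$ by rescaled integer tuples $\bm_K / K$ and $\bn_K / K$, with $K \to \infty$ and $\bm_K, \bn_K$ non-negative integer vectors in $\R^N_<$ satisfying $\bm_K \leq \bn_K$ coordinatewise. The substitution $v_i := u_i^{1 / K}$ yields
\[
\frac{\det \bu^{\circ ( \bn_K / K )}}{\det \bu^{\circ ( \bm_K / K )}} = \frac{\det \bv^{\circ \bn_K}}{\det \bv^{\circ \bm_K}},
\]
which is non-decreasing in each $v_i$ by the integer case and, since $u_i \mapsto u_i^{1 / K}$ is increasing, also non-decreasing in each $u_i$; passing to the limit preserves non-decreasingness. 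For the extension to the closed orthant $( 0, \infty )^N$ in the integer case, $s_\mu$ is a polynomial with non-negative coefficients that is strictly positive on $( 0, \infty )^N$, so $f = s_\lambda / s_\mu$ is smooth there and monotonicity extends from the open dense subset $( 0, \infty )^N_{\neq}$ by continuity.

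The main obstacle is the combinatorial inequality $\langle c_k \rangle_\lambda \geq \langle c_k \rangle_\mu$ for $\mu \subset \lambda$. Although intuitively natural---it expresses the fact that a larger Young diagram produces a larger expected content at each fixed value---constructing an explicit weight-preserving coupling of the Schur measures on differently shaped SSYT is the technical heart of the argument.
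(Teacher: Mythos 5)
You should first note that the paper does not prove Theorem~\ref{TSchurmon} at all: it is quoted from Khare--Tao \cite{KT} (their Corollary~8.7 and Proposition~8.1), and the paper's own contribution is the \emph{strict} versions, Theorems~\ref{Tkt} and~\ref{Tkt2}. So your proposal must stand on its own, and it does not: after the (correct) reductions --- symmetry in the coordinates, shifting the exponents, identifying the integer-exponent ratio with $s_\lambda/s_\mu$ for $\mu \subset \lambda$, the log-derivative identity $u_k \partial_{u_k} \log s_\lambda = \langle c_k \rangle_\lambda$, and the scaling/limiting argument $v_i = u_i^{1/K}$ that transfers the integer case to real exponents (this last step is fine, since non-strict monotonicity survives pointwise limits and the generalized Vandermonde denominators are non-zero on $( 0, \infty )^N_{\neq}$) --- everything hinges on the inequality $\langle c_k \rangle_\lambda \geq \langle c_k \rangle_\mu$ for $\mu \subset \lambda$. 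But that inequality \emph{is} the theorem (it is exactly the statement that $u_k \partial_{u_k} \log f \geq 0$), and you do not prove it; you only announce an intended strategy and concede it is "the technical heart". That is a genuine gap, not a routine verification left to the reader.

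Moreover, the strategy you sketch is unlikely to work as stated. A weight-preserving injection from $\mathrm{SSYT}( \mu )$ into $\mathrm{SSYT}( \lambda )$ cannot by itself compare the two expectations, because they are normalized by different Schur polynomials: clearing denominators, the one-box case requires non-negativity (on the positive orthant, or monomial-positivity if you want a purely combinatorial proof) of the \emph{cross} difference
\[
s_\mu( \bu )\, u_k \partial_{u_k} s_\lambda( \bu ) - s_\lambda( \bu )\, u_k \partial_{u_k} s_\mu( \bu ),
\]
an inequality about pairs $( T, S )$ of tableaux of the two shapes, so any combinatorial argument must couple or pair such pairs, not single tableaux. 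This is precisely the non-elementary content: the known proofs (cited in the paper) go through log-supermodularity and totally positive matrices, through the Lam--Postnikov--Pylyavskyy Schur-positivity theorem \cite{LPP}, or through Chebyshev blossoming in M\"untz spaces. Unless you can actually construct the claimed coupling for the one-box step, your argument reduces the theorem to an open step of essentially the same depth as the theorem itself.
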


\editAB{To see the connection with Schur, we note that when
$\bm$ and $\bn$ are composed of non-negative integers then
$f( \bu ) \equiv s_\bn( \bu ) / s_\bm( \bu )$, the ratio of
Schur polynomials $s_\bm$ and $s_\bn$ as defined in
(\ref{Dschur}) below.}

Theorem~\ref{TSchurmon} is interesting for multiple reasons. First, it
provided the missing ingredient required to extend the positivity
preserver results in~\cite{BGKP-fixeddim} to general polynomials
in~\cite{KT}.
Second, it led to novel characterizations in the theory of real
inequalities~\cite{KT}: of weak majorization, as well as of
majorization for all real tuples, extending the integer-tuple
case in~\cite{CGS,Sra}.
\editAB{Third,} this result admits several different proofs:
via a log-supermodularity phenomenon and totally positive matrices~\cite{KT},
using a result of Lam, Postnikov and Pylyavskyy~\cite{LPP} from
representation theory and the theory of symmetric functions~\cite{KT},
and relying on the theory of Chebyshev blossoming in M\"untz spaces,
as developed by Ait-Haddou
\editAB{and co-authors}~\cite{Blossom2, Blossom1}.

\editAB{In fact, the hypotheses of this theorem serve
to deliver a stronger conclusion and this is our second} main result:

\begin{theorem}\label{T2}
\editAB{With the hypotheses of} Theorem~\ref{TSchurmon},
\editAB{when $\bm$ and $\bn$ are distinct the function} $f$ is
\editAB{actually} strictly increasing in each coordinate.
\editAB{Moreover, when $\bm$ and $\bn$ also have non-negative-integer
entries, this} coordinate-wise strict monotonicity
holds \editAB{for the extension of~$f$ to} all of~$( 0, \infty )^N$.
\end{theorem}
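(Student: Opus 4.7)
The plan is to upgrade Theorem~\ref{TSchurmon} from weakly to strictly monotone by combining the real-analyticity of~$f$ on $(0,\infty)^N_{\neq}$ with a monomial-support argument. The central claim that will imply strict monotonicity is that, for any fixed fiber $I := (u_i^*)_{i \neq k} \in (0,\infty)^{N-1}_{\neq}$, the function $f_I(u_k) := f(u_1^*, \ldots, u_{k-1}^*, u_k, u_{k+1}^*, \ldots, u_N^*)$ is not constant in~$u_k$.

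To establish this, I would expand the two determinants along row~$k$ to write
\[ \det \bu^{\circ \bn} = \sum_{j=1}^N \alpha_j(I)\, u_k^{n_j}, \qquad \det \bu^{\circ \bm} = \sum_{j=1}^N \beta_j(I)\, u_k^{m_j}, \]
noting that each cofactor $\alpha_j(I)$ (and similarly $\beta_j(I)$) is, up to sign, an $(N-1) \times (N-1)$ generalized Vandermonde determinant with distinct positive entries~$u_i^*$ and distinct exponents $\bn \setminus \{n_j\}$, hence is non-zero. If $f_I \equiv c$, then $\sum_j \alpha_j u_k^{n_j} = c \sum_j \beta_j u_k^{m_j}$ as a generalized polynomial in~$u_k$. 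Since $\bm \neq \bn$ in $\R^N_<$, the sets $\{n_1, \ldots, n_N\}$ and $\{m_1, \ldots, m_N\}$ differ as subsets of~$\R$, so some exponent occurs on only one side; the corresponding non-zero coefficient forces a contradiction.

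Having shown that $f_I$ is non-decreasing, real-analytic, and non-constant on each connected component of $(0, \infty) \setminus \{u_i^* : i \neq k\}$, strict monotonicity on each such component follows from the one-variable principle that a non-decreasing real-analytic function is either identically constant or strictly increasing. At each point $u_k = u_i^*$ both determinants vanish to first order, so $f_I$ extends continuously with a finite limit; combining strict monotonicity on each open component with the global non-decreasing property propagates strict monotonicity across these removable singularities, giving strict monotonicity of~$f$ in each coordinate on all of~$(0, \infty)^N_{\neq}$.

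The hardest part will be the integer case, where $f$ extends to the ratio $s_\lambda / s_\mu$ of Schur polynomials on the full set $(0, \infty)^N$ and one needs strict monotonicity there. The monomial-support argument above degenerates at fibers with coordinate repetitions, since the relevant cofactors then all vanish. My plan is to instead exploit the fact that the one-variable function obtained by fixing all coordinates but~$u_k$ at those of any $\bu^* \in (0, \infty)^N$ is a non-decreasing rational function of $u_k$, hence either strictly increasing or globally constant. Ruling out constancy then reduces to a degree-in-$u_k$ comparison: both $s_\lambda$ and $s_\mu$ have degrees $\lambda_1$ and $\mu_1$ respectively in $u_k$ (by symmetry of the Schur polynomials), with leading coefficients expressible as sums of positive monomials in the remaining variables; the case $\lambda_1 > \mu_1$ is immediate, while the case $\lambda_1 = \mu_1$ would be settled by comparing lower-order coefficients via the Pieri branching formula and the linear independence of Schur polynomials in $N-1$ variables, ultimately forcing a constraint incompatible with $\lambda \neq \mu$.
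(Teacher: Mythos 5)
Your argument for the first assertion---strict coordinate-wise monotonicity on $(0,\infty)^N_{\neq}$---is correct and is essentially the argument in the paper (Theorem~\ref{Tkt}): if strictness fails, weak monotonicity (Theorem~\ref{TSchurmon}) forces constancy on a subinterval of the $u_k$-fiber; expanding both generalized Vandermonde determinants along the $k$th row gives $\sum_j \alpha_j(I) u_k^{n_j} = c \sum_j \beta_j(I) u_k^{m_j}$ with non-zero cofactors, contradicting the linear independence of distinct real power functions on an interval. The paper does the same thing after substituting $u_k = e^y$ and using independence of exponentials, a purely cosmetic difference; your handling of the removable singularities $u_k = u_i^*$ (strictness within components plus weak monotonicity across them) is also fine.

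The gap is in the ``Moreover'' clause, precisely the step you flag as hardest and leave as a plan. Note first that strictness on the dense set of tuples with distinct entries plus continuity does not give strictness along a fiber through a point with repeated coordinates, so this case genuinely requires an argument. Your proposed tool cannot supply it: after the branching expansion, the coefficient of $u_k^d$ in $s_\lambda(u_k,\bu_0)$ is a \emph{sum} of Schur values $\sum_\nu s_\nu(\bu_0)$, and constancy of the fiber function only yields numerical equalities between such sums at the single fixed point $\bu_0 = (u_i^*)_{i \neq k}$, which may have equal coordinates. Linear independence of Schur polynomials in $N-1$ variables concerns polynomial identities and says nothing about evaluations at one point; two different non-negative combinations of Schur polynomials can agree there. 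Your degree and lowest-degree comparisons only force $\lambda_1 = \mu_1$ and $\lambda_N = \mu_N$, which for $N \geq 3$ does not force $\lambda = \mu$, and no ``constraint incompatible with $\lambda \neq \mu$'' is actually derived. The paper closes exactly this case by clearing the Vandermonde factor: the row-expansion identity~(\ref{Eschurdet}), extended to all of $(0,\infty)^N$ by continuity, expresses the fiber function as a ratio whose coefficients are \emph{single} Schur polynomials $s_{\bn^{(j)}}(\bu_0)$ and $s_{\bm^{(j)}}(\bu_0)$, strictly positive even when $\bu_0$ has repeated entries; constancy on an interval then makes a one-variable polynomial vanish identically, while $\bm \neq \bn$ provides an exponent occurring on only one side whose coefficient is non-zero, a contradiction. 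Your outline needs this device (or an equivalent one) to be a proof.
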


In fact, we show a stronger result than the final assertion here, by
extending \editAB{the function}
to parts of the boundary of the positive orthant. Moreover,
it is not the generalized Vandermonde ratio with non-integer powers but
the Schur polynomial ratio with integer exponents whose strict
monotonicity has the more involved proof. See Theorems~\ref{Tkt}
and~\ref{Tkt2} \editAB{for details}.

Apart from its intrinsic interest, Theorem~\ref{T2} is the key to proving
\editAB{Theorem~\ref{T1}} and
\editAB{its variations in Section~\ref{Sstrict}}.
The proofs of both \editAB{main} results combine
\editAB{techniques from analysis}
with properties of Schur polynomials, which are inherently
algebraic objects with a representation-theoretic flavour. Our exploration
reinforces the need \editAB{for further study of} Schur functions from an
\editAB{analytical}
viewpoint. Prior work has already revealed the essential role of Schur functions in \editAB{the investigation of} positivity transforms \editAB{(see}
\cite{BGKP-fixeddim, KT} \editAB{and also~\cite{McSN}),}
and we can add two more
contributions from recent work \cite{horndet}. The first creates a bridge
between analysis and algebra: \editAB{the Schur polynomials lie within}
 the Maclaurin expansion of $\det f[ \bu \bv^T ]$
\editAB{for every} smooth function~$f$. The second walks across this bridge to contribute to algebra: the well-known determinant formula of Cauchy in
symmetric function theory, its extension by Frobenius, and a
determinant computation by Loewner \cite{horn} all admit a
common extension, to \editAB{power series over an arbitrary}
commutative ring.

While the main theme of our work is the classification of positivity transforms,
at least two ingredients in the proofs below may be of independent interest:
the strict monotonicity of certain \editAB{ratios} of Schur functions
\editAB{and} the continuity of \editAB{certain Rayleigh quotients}
on isogenic strata of positive matrices.

\editAB{One conclusion that may be drawn from} the present article is that
applications of Schur functions \editAB{to topics beyond algebra} are far from being fully explored. Further discoveries
\editAB{and more surprises undoubtedly lie in wait.}

\subsection*{Organisation of the remainder of this paper}

\editAB{Section~\ref{Sstrict} contains the statements and proofs of
extended versions of the two new theorems stated above,
Theorems~\ref{T1} and~\ref{TSchurmon}. This section concludes
by resolving the question of whether Loewner's necessary condition
for smooth functions to preserve positive semidefiniteness in fixed
dimension is also sufficient.}

\editAB{In Section~\ref{Srank}, we 
recall the isogenic block stratification from~\cite{BGKP-pmp, BGKP-strata}
and use this to find the rank} of the matrix $f [A ]$ for $A$ in
\editAB{any given} stratum and $f$ as in
\editAB{Theorem~\ref{Tthreshold}(2)(b)}.

\editAB{We conclude with} Section~\ref{Scont}, in which we recall
\editAB{the interpretation}
from~\cite{BGKP-fixeddim} of the
\editAB{bound~$\cC$ in terms of a Rayleigh quotient.}
We prove that this Rayleigh quotient is
continuous as a function of \editAB{the underlying matrix}
when restricted to each isogenic stratum.

For the reader's convenience, we append before the bibliography a list of
symbols used throughout this article.
%}}}

%{{{1 Section 2 - Strictness of LMIs for Hadamard powers, and the Schur strict monotonicity lemma
\section{Strictness of linear matrix inequalities for Hadamard
powers, and the Schur strict monotonicity lemma}\label{Sstrict}

In this section, we \editAB{obtain two variations on
Theorem~\ref{Tthreshold}. We note first the following consequence
of this theorem.}

\begin{corollary}\label{Clmi}
\editAB{Let $f$, $\cC$ and $\cP_0$ be} as in Theorem~\ref{Tthreshold}.
If $c_0$, \ldots, $c_{N - 1} > 0$ then
\begin{equation}\label{Elmi}
A^{\circ M} \leq \cC \sum_{j=0}^{N-1} c_j A^{\circ n_j} 
\qquad \textrm{for any } A \in \cP_0,
\end{equation}
where $\leq$ denotes the Loewner ordering,
\editAB{and the} constant $\cC$ is sharp.
\end{corollary}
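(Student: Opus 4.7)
The plan is to observe that Corollary~\ref{Clmi} is essentially a rephrasing of Theorem~\ref{Tthreshold} once one interprets the Loewner inequality as the positivity of a difference of Hadamard powers. I would first apply Theorem~\ref{Tthreshold} in the critical case $c' = -\cC^{-1}$: since the hypothesis $c_0, \ldots, c_{N-1} > 0$ places us precisely in option~(2)(b) with the boundary value of $c'$, the map $f[-]$ preserves positivity on $\cP_0$, and therefore
\[
\sum_{j=0}^{N-1} c_j A^{\circ n_j} - \cC^{-1} A^{\circ M} \in \cP_N(\C) \qquad \text{for every } A \in \cP_0.
\]
Multiplying through by $\cC > 0$ and rearranging in the Loewner order yields the desired inequality~(\ref{Elmi}).

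For sharpness, I would proceed by contradiction: suppose some constant $\cC' \in (0, \cC)$ also makes (\ref{Elmi}) hold for every $A \in \cP_0$. Then, rescaling, the polynomial $h(z) + c'' z^M$ with $c'' := -(\cC')^{-1}$ would entrywise preserve positivity on $\cP_0$. Because $\cC' < \cC$ we have $c'' < -\cC^{-1}$, so the pair $(c_0, \ldots, c_{N-1}, c'')$ satisfies neither alternative (a) nor~(b) of the coefficient condition in Theorem~\ref{Tthreshold}(2). By the equivalence $(1) \Leftrightarrow (2)$ of that theorem, this contradicts the preservation of positivity and so no such $\cC'$ exists.

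I do not expect any serious obstacle; the only point requiring mild care is that $\cP_0$ contains $\cP_N^1\bigl( (0, \rho) \bigr)$ in every variant of Definition~\ref{Dsetup}, so both the ``preserve positivity on $\cP_0$'' conclusion and the ``fails on $\cP_0$'' half of the sharpness argument are consistent with the test-set conventions used in Theorem~\ref{Tthreshold}. No new analysis of Schur polynomials, Vandermonde determinants or rank-one reductions is needed at this stage: the corollary is a Loewner-order repackaging of the threshold result, and the strength of (\ref{Elmi}) is inherited directly from the sharpness of $-\cC^{-1}$.
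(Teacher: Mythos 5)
Your proposal is correct and matches the paper's (implicit) argument: the corollary is stated there as a direct consequence of Theorem~\ref{Tthreshold}, obtained exactly as you do by taking $c' = -\cC^{-1}$ in the implication (2)(b)$\Rightarrow$(1) and rearranging in the Loewner order, with sharpness following from (1)$\Rightarrow$(2) applied to any hypothetical smaller constant. No gaps; your care about $\cP_0$ containing $\cP_N^1\bigl((0,\rho)\bigr)$ in every variant is the only point worth noting and you handled it.
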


It follows immediately from this Corollary that \editAB{the matrix
\[
f[ A ] = \sum_{j = 0}^{N - 1} c_j A^{\circ n_j} + c' A^{\circ M}
\]
is} positive semidefinite, \editAB{whenever
$c_0$, \ldots, $c_{N-1} > 0$ and $c' \geq -\cC^{-1}$, for any
$A \in \cP_0$. We introduce and recall some notation for two important
boundary cases:
\[
g( z ) = \sum_{j = 0}^{N - 1} c_j z^{n_j} -\cC^{-1} z^M %
\qquad \textrm{and} \qquad %
h( z ) = \sum_{j = 0}^{N - 1} c_j z^{n_j}.
\]}
It is natural to ask when \editAB{the matrices $f[ A ]$, $g[ A ]$ and $h[ A ]$}
are positive definite.
The following strengthening of Theorem~\ref{Tthreshold} shows that these
matrices are generically positive definite in a strong sense, and zero
only in the one-dimensional, degenerate case.

\begin{theorem}\label{Tlmi}
\editAB{Let $f$ and $\cP_0$ be as in Definition~\ref{Dsetup}(4), so
that $n_0$ and $M$ are non-negative integers and $n_j = n_0 + j$
for $j = 0$, \ldots, $N -1$. Suppose $c_0$, \ldots, $c_{N - 1} > 0$
and $c' > -\cC^{-1}$, where $\cC$ is as in (\ref{DC}).}
\begin{enumerate}
\item \editAB{Let $A \in \cP_0$ and suppose $n_0 = 0$
if $A$ has a zero row.} The following are equivalent.
\begin{enumerate}
\item There exists a vector $\bu \in \C^N$ with distinct entries such
that $A \geq \bu \bu^*$ and \editAB{$\bu$ has a zero entry if and only if
$A$ has a zero row.}

\item All \editAB{of the} rows of $A$ are \editAB{distinct}.

\item The matrix $h[ A ]$ is positive definite.

\item The inequality~(\ref{Elmi}) is strict, that is, $f[ A ]$ is
positive definite.
\end{enumerate}
\item Suppose $A \in \cP_0$ has a row with distinct entries and $n_0 = 0$ if any
entry in this row is zero. Then $g[ A ]$ is positive definite.
\end{enumerate}
\editAB{Furthermore,} equality in~(\ref{Elmi}) is attained on~$\cP_0$
\editAB{if and only if either $N = 1$ and $A = \rho$, or
$n_0 > 0$ and $A = \bzero_{N \times N}$.}
\end{theorem}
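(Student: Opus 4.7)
The plan is to prove part~(1) via the cyclic chain (a) $\Rightarrow$ (c) $\Rightarrow$ (d) $\Rightarrow$ (b) $\Rightarrow$ (a), and to handle part~(2) together with the equality characterisation by leveraging the strict Schur monotonicity of Theorem~\ref{T2}. Throughout, the starting point is the inequality $g[ A ] = h[ A ] - \cC^{-1} A^{\circ M} \geq 0$ supplied by Corollary~\ref{Clmi}.

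For (a) $\Rightarrow$ (c), I first observe that $A \geq \bu \bu^*$ entails $A^{\circ n} \geq ( \bu \bu^* )^{\circ n} = \bu^{\circ n} ( \bu^{\circ n} )^*$ for every non-negative integer $n$, via the telescoping identity
\[
A^{\circ n} - B^{\circ n} = A^{\circ ( n - 1 )} \circ ( A - B ) + ( A^{\circ ( n - 1 )} - B^{\circ ( n - 1 )} ) \circ B
\]
and induction using the Schur product theorem. Summing with positive weights gives $h[ A ] \geq h[ \bu \bu^* ] = \sum_j c_j \bu^{\circ n_j} ( \bu^{\circ n_j} )^*$, and the rank of the latter equals that of the generalised Vandermonde $\bu^{\circ \bn}$. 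Since the $n_j$'s are successive integers, $\det \bu^{\circ \bn}$ factors as $\prod_i u_i^{n_0} \cdot V( \bu )$; the hypothesis that $\bu$ vanishes exactly where $A$ has a zero row, combined with $n_0 = 0$ in that case, keeps this determinant nonzero, so $h[ A ]$ is positive definite. The passage (c) $\Rightarrow$ (d) is handled by the convex combination $f[ A ] = ( 1 - \lambda ) h[ A ] + \lambda g[ A ]$ with $\lambda := -c' \cC \in ( 0, 1 )$ when $c' \in ( -\cC^{-1}, 0 )$, while for $c' \geq 0$ positivity is immediate from $f[ A ] \geq h[ A ] > 0$. The implication (d) $\Rightarrow$ (b) is clear, since equal rows descend through the entrywise map. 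Finally (b) $\Rightarrow$ (a) is delivered by the Perron--Frobenius-based domination result Theorem~\ref{Tdom}, which produces a rank-one lower bound $\bu \bu^*$ with the prescribed distinctness and zero-row compatibility.

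For part~(2) and the equality characterisation, the driving tool is Theorem~\ref{T2}. Revisiting the derivation of $\cC$ in Corollary~\ref{Clmi}, $\cC^{-1}$ emerges as the extremal value of a Rayleigh-type ratio governed by Schur polynomial ratios of the type appearing in Theorem~\ref{TSchurmon}; strict coordinate-wise monotonicity then forbids interior saturation, so under the hypothesis that some row of $A$ has distinct entries, strict inequality holds in~(\ref{Elmi}) and $g[ A ]$ is positive definite. The same mechanism pins down the equality locus: saturation forces collapse onto the extremal rank-one configuration, which, given $A \in \cP_0$ and the non-negativity of $n_0$, leaves only $N = 1$ with $A = \rho$, or $A = \bzero_{N \times N}$ when $n_0 > 0$.

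The main obstacle lies in part~(2): converting the pointwise strict Schur ratio monotonicity into a strict linear matrix inequality for $g[ A ]$ under the hypothesis that merely one row of $A$ has distinct entries. A helpful simplification is that a single row with distinct entries forces all rows of $A$ to be distinct by Hermitian symmetry, so part~(1) already yields $f[ A ]$ positive definite for every $c' > -\cC^{-1}$; the new content is to push this conclusion to the boundary $c' = -\cC^{-1}$. Doing so requires, after reducing via a Rayleigh quotient, evaluating the relevant Schur ratio at a tuple with at least one coordinate strictly inside the open orthant and invoking Theorem~\ref{T2} for a strict gap, then lifting this pointwise gap back to the matrix level -- delicate because the sharpness of $\cC$ is attained precisely along rank-one boundary configurations, and so the strictness must be carefully localised to the distinguished row whose entries are known to be distinct.
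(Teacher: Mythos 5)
Your part~(1) is essentially sound, and your convex-combination argument for (c)~$\Rightarrow$~(d), writing $f[A] = (1-\lambda)h[A] + \lambda g[A]$ with $\lambda = -c'\cC \in (0,1)$, is a clean alternative to the paper's appeal to Lemma~\ref{Llmi}. However, your step (b)~$\Rightarrow$~(a) misapplies Theorem~\ref{Tdom}: in the setting of Definition~\ref{Dsetup}(4) the matrix $A$ has entries in $\overline{D}(0,\rho)$, so it is in general complex, whereas Theorem~\ref{Tdom} (and its Perron--Frobenius proof) applies only to $A \in \cP_N\bigl([0,\infty)\bigr)$ and produces a non-negative vector. Since (a) only asks for $\bu \in \C^N$, the correct argument here is the elementary one: pick $\bv$ not orthogonal to any difference $\bu_j - \bu_k$ of distinct rows nor to any non-zero row, set $\bw := A\overline{\bv}$, so that $\bw$ has distinct entries and vanishes exactly where $A$ has a zero row, and apply Proposition~\ref{Ppos-constant} to get $A \geq t\,\bw\bw^*$. (The paper reserves Theorem~\ref{Tdom} for Theorem~\ref{Tlmi2}, where $\bu$ must additionally be non-negative.)

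The genuine gap is part~(2), on which the equality characterisation also rests. You outline a plan---reduce via a Rayleigh quotient, invoke Theorem~\ref{T2} for a strict gap, then ``lift back to the matrix level''---and you yourself flag the lifting as an unresolved obstacle; no argument is actually given either for the rank-one case or for the reduction of a general $A$ to rank one. Worse, in this complex setting the Schur-monotonicity route cannot work as stated: Theorems~\ref{Tkt} and~\ref{Tkt2} concern tuples in the closed positive orthant, whereas the extremal rank-one matrices here are $\bu\bu^*$ with $\bu \in \overline{D}(0,\sqrt{\rho})^N$ complex. The paper's proof instead specialises the determinantal identity~(\ref{Ejacobi-trudi}) and runs an equality analysis in the triangle inequality on the Littlewood monomial expansion (this is exactly where consecutive integer exponents are needed) to force all entries of $\bu$ to coincide, a contradiction; it then reduces a general $A$ having a row with distinct entries to this rank-one case by subtracting $\bu\bu^*$ built from that row and invoking the integral identity (3.16) of \cite{BGKP-fixeddim}, finishing with Lemma~\ref{Lpd} when $n_0 > 0$. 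Strict Schur monotonicity is the engine for the real case, Theorem~\ref{Tlmi2}(2), not for Theorem~\ref{Tlmi}(2). Finally, the equality statement is much simpler than your ``collapse onto the extremal configuration'': equality in~(\ref{Elmi}) means $g[A] = \bzero_{N \times N}$, and for $N > 1$, $A \neq \bzero_{N \times N}$ one applies part~(2) to a rank-one matrix whose diagonal contains a positive diagonal entry $x$ of $A$ to conclude $g(x) > 0$, so some diagonal entry of $g[A]$ is positive. As written, part~(2) and the equality claim remain unproven in your proposal.
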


Note that part~(1)(a) of Theorem~\ref{Tlmi} does not depend on the
coefficients $c_0$, \ldots, $c_{N - 1}$ and~$c'$,
\editAB{and that the existence of $\bu$ follows immediately
from Proposition~\ref{Ppos-constant} if $A$ is positive definite.}
Note also that ``row''
may be replaced with ``column'' \editAB{throughout, as all the
matrices are Hermitian.}

\editAB{Theorem~\ref{Tlmi2} below provides a variation on 
Theorem~\ref{Tlmi} for the other three settings of Definition~\ref{Dsetup}.}

The proof of Theorem~\ref{Tlmi} relies on the following preliminary
observations.

\begin{lemma}\label{Llmi}
Suppose $N \geq 1$ and $C$, $D \in \cP_N( \C )$ with $C \geq D$. Then
$C - t D$ has the same kernel and rank as~$C$ for all
$t \in [ 0, 1 )$.
\end{lemma}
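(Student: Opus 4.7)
The plan is to exploit the convex combination identity
\[
C - t D = ( 1 - t ) C + t ( C - D ) \qquad ( t \in [ 0, 1 ) ),
\]
which expresses $C - t D$ as a non-negative combination of the positive semidefinite matrices $C$ and $C - D$. Hence $C - t D \in \cP_N( \C )$ for every $t \in [ 0, 1 )$, and so, by the standard characterisation of the kernel of a positive semidefinite matrix, a vector $v \in \C^N$ lies in $\ker( C - t D )$ if and only if $v^* ( C - t D ) v = 0$.

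First I would verify the chain of kernel inclusions $\ker( C ) \subset \ker( D ) \subset \ker( C - t D )$. The second inclusion is immediate: if $D v = 0$ then, since $C \geq D$ forces $v^* C v \geq v^* D v = 0$ while $v^* C v = 0$ would be required to match any kernel vector, one applies the observation below to $C v = 0$. More directly, for the first inclusion, if $C v = 0$ then $0 = v^* C v \geq v^* D v \geq 0$, so $v^* D v = 0$ and hence $D v = 0$ because $D$ is positive semidefinite; the second inclusion is then obvious.

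For the reverse inclusion $\ker( C - t D ) \subset \ker( C )$, which is the substantive content, I would take $v \in \ker( C - t D )$, so that $C v = t D v$, and form the quadratic form on both sides to get $v^* C v = t \, v^* D v$. Combined with the defining inequality $v^* C v \geq v^* D v$ for $C \geq D$, this yields $( 1 - t ) v^* D v \leq 0$. Since $t < 1$ and $D \geq 0$, this forces $v^* D v = 0$, hence $D v = 0$, and then $C v = t D v = 0$, so $v \in \ker( C )$.

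This establishes $\ker( C - t D ) = \ker( C )$ for all $t \in [ 0, 1 )$, and the equality of ranks follows via the rank-nullity identity $\rk( \cdot ) = N - \dim \ker( \cdot )$. There is no genuine obstacle here; the only thing one must be careful about is restricting to $t \in [ 0, 1 )$, since at $t = 1$ the step $( 1 - t ) v^* D v \leq 0$ gives no information and the conclusion can genuinely fail (for instance when $C = D$).
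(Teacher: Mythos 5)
Your proof is correct and follows essentially the same route as the paper: both arguments work with quadratic forms and the fact that a positive semidefinite matrix annihilates exactly those vectors on which its quadratic form vanishes; your reverse inclusion (deriving $(1-t)\,\bv^* D \bv \leq 0$ directly from $C \geq D$) is a slightly more streamlined version of the paper's contradiction with $C - \tfrac{1+t}{2}D$.

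One slip to fix: the displayed chain $\ker C \subset \ker D \subset \ker(C - tD)$ asserts the inclusion $\ker D \subset \ker(C - tD)$, which is false in general (take $D = 0$ and $C$ nonsingular), and the sentence offered in its support is not a proof of it. Fortunately you never need it: what the forward direction requires is $\ker C \subset \ker(C - tD)$, and this follows at once from the inclusion $\ker C \subset \ker D$ that you do prove correctly, since $C\bv = 0$ then gives $D\bv = 0$ and hence $(C - tD)\bv = 0$. With the chain restated in that form, the argument is complete.
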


\begin{proof}
Fix $t \in ( 0, 1 )$. If $C \bu = 0$ for some $\bu \in \C^N$, then, as
$0 \leq C - t D \leq C$, it follows that
\[
0 \leq \bu^* ( C - t D ) \bu \leq \bu^* C \bu = 0,
\]
so $\ker C \subset \ker ( C - t D )$. Conversely, if
$( C - t D ) \bu = 0$ for some $\bu \in \C^N$, then
\[
0 = \bu^* ( C - t D ) \bu = \bu^* C \bu - t ( \bu^* D \bu )
\quad \implies \quad
\bu^* C \bu = t ( \bu^* D \bu ).
\]
Now, if $\bu^* C \bu > 0$ \editAB{then $\bu^* D \bu > 0$, so}
\[
\bu^* \Bigl( C - \frac{1 + t}{2} D \Bigr) \bu = %
\frac{t - 1}{2} \bu^* D \bu < 0,
\]
which is impossible as $0 \leq C - D \leq C - \frac{1 + t}{2} D$. Thus
$C \bu = 0$, proving the reverse inclusion. We are now done, by the
rank-nullity theorem.
\end{proof}

\begin{proposition}[{\cite[Proposition~4.2]{BGKP-fixeddim}}]\label{Ppos-constant}
Suppose $N \geq 1$ and $C$, $D \in \cP_N( \C )$. The following are
equivalent.
\begin{enumerate}
\item If $\bv^* C \bv = 0$ for some $\bv \in \C^N$, then
$\bv^* D \bv = 0$.

\item \editAB{The inclusion} $\ker C \subset \ker D$ \editAB{holds.}

\item There exists a constant $t > 0$ such that $C \geq t D$.
\end{enumerate}
\end{proposition}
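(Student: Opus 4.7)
The plan is to prove the cycle $(3) \Rightarrow (1) \Rightarrow (2) \Rightarrow (3)$, of which the first two implications are formal bookkeeping and the third carries the genuine content.

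For $(3) \Rightarrow (1)$, if $C \geq t D$ with $t > 0$ and $\bv^* C \bv = 0$, then the sandwich $0 \leq t \bv^* D \bv \leq \bv^* C \bv = 0$ combined with $t > 0$ yields $\bv^* D \bv = 0$. For $(1) \Rightarrow (2)$, I would invoke the standard factorisation $C = B^* B$ available for any positive semidefinite matrix, so that $\bv^* C \bv = \|B \bv\|^2$; this vanishes exactly when $B \bv = 0$, and hence exactly when $C \bv = B^* B \bv = 0$. The analogous remark for $D$ rewrites condition~(1) verbatim as the kernel containment in~(2).

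The substantive step is $(2) \Rightarrow (3)$. I would fix the orthogonal decomposition $\C^N = \ker C \oplus V$, with $V := ( \ker C )^\perp$. Since $C$ is Hermitian it respects this splitting, and its restriction to $V$ is positive definite with some smallest eigenvalue $\lambda > 0$. The containment $\ker C \subset \ker D$ forces $D$ to vanish on $\ker C$, so, letting $P$ denote the orthogonal projection onto $V$, one has $D ( I - P ) = 0$ and, by Hermiticity of $D$, also $( I - P ) D = 0$. Expanding $\bv = P \bv + ( I - P ) \bv$ therefore gives $\bv^* D \bv = ( P \bv )^* D ( P \bv ) \leq \|D\| \, \|P \bv\|^2$, while the analogous expansion for $C$ yields $\bv^* C \bv \geq \lambda \|P \bv\|^2$. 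Setting $t := \lambda / \|D\|$ (with any $t > 0$ allowed in the trivial case $D = 0$) produces $C \geq t D$ as required.

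I do not foresee any real obstacle. The one subtle point is the reduction to the positive definite restriction $C|_V$, which depends entirely on the hypothesis $\ker C \subset \ker D$ via the identity $D ( I - P ) = 0$; without this, the kernel component of $\bv$ could spoil the inequality, since $C$ annihilates it while $D$ need not.
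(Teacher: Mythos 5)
Your proof is correct: each implication is verified, the kernel decomposition $\C^N = \ker C \oplus (\ker C)^\perp$ together with $t = \lambda/\|D\|$ does yield $(2) \Rightarrow (3)$, and you handle the degenerate case $D = 0$ (which also covers $C = 0$). The paper itself does not prove this statement but imports it from \cite[Proposition~4.2]{BGKP-fixeddim}, and your argument is essentially the standard one used there, so no further comparison is needed.
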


\editAB{It follows immediately from the previous result
that if $C$, $D \in \cP_N$ with $C \geq D$ and $D$ is positive definite,
so invertible, then $C$ is also invertible, so positive definite.}

While Theorem~\ref{Tlmi}(2) is a result on positive definiteness, its
proof uses connections to Schur polynomials and Young tableaux. The key
step in this respect is Theorem~\ref{Tjacobi-trudi}, which requires the following
definition (\editAB{which adopts a different convention to that often found in the} literature \cite{Macdonald}).

\begin{definition}
\editAB{As above, if $S$ is any subset of real numbers, we let
$S^N_<$ denote the set of all increasing $N$-tuples of the form
$\bn = ( n_0 < \ldots < n_{N - 1} )$ with entries in $S$. For such an
$N$-tuple $\bn$, we let $| \bn | := n_0 + \cdots + n_{N - 1}$.

Given any $\bn \in ( \Z_+ )^N_<$,} the corresponding
\emph{Schur polynomial} $s_\bn( u_1, \ldots, u_N )$
is the unique polynomial extension of \editAB{the rational expression}
\begin{equation}\label{Dschur}
s_\bn( u_1, \ldots, u_N ) := %
\frac{\det ( u_i^{n_{j - 1}} )_{i, j = 1}^N}%
{\det ( u_i^{j - 1} )_{i, j =1}^N}.
\end{equation}
Note that the denominator is precisely the Vandermonde determinant
\[
V( \bu ) = V( u_1, \ldots, u_N ) := \det ( u_i^{j - 1} )_{i, j = 1}^N %
= \prod_{1 \leq k < l \leq N} ( u_l - u_k )
\]
\editAB{and we can write $s_\bn( \bu ) V( \bu ) = \det \bu^{\circ \bn}$,
where}
the matrix $\bu^{\circ \bn} := ( u_i^{n_{j - 1}} )_{i, j = 1}^N$.
\editAB{Since the right-hand side of (\ref{Dschur}) is unchanged after
swapping any two elements of $\bu$, each Schur polynomial is a
symmetric function.

For any $q \neq 0$ we have the product identity
\cite[((7.105)]{Stan2}
\begin{equation}\label{Eproduct}
s_\bn( 1, q, \ldots, q^{N - 1} ) = %
\frac{\det\bigl( ( q^{n_{j - 1}} )^{i - 1} \bigr)_{i, j = 1}^N}%
{\det( q^{j - 1} )^{i - 1} )_{i, j = 1}^N} = %
\prod_{1 \leq k < l \leq N} %
\frac{q^{n_{l  - 1}} - q^{n_{k - 1}}}{q^{l -1} - q^{k - 1}}
\end{equation}
as the numerator and
denominator are both Vandermonde determinants. Taking $q \to 0$
leads to the specialisation
\[
s_\bn( u \bone_T^N ) = u^{| \bn - \bdelta |} %
\prod_{1 \leq k < l \leq N} \frac{n_{l - 1} - n_{k - 1}}{l - k} = %
u^{| \bn - \bdelta |} \frac{V( \bn )}{V( \bdelta )}
\qquad \textrm{for all } u,
\]
where $\bdelta := ( 0, 1, 2, \ldots, N - 1 )$.}
As is well known \cite[Chapter~I, Equation~(5.12)]{Macdonald},
\editAB{thanks to Littlewood we have the identity
\begin{equation}\label{Elittlewood}
s_\bn( \bu ) = \sum_\bt \bu^\bt,
\end{equation}
a sum of $s_\bn( \bone_N^T) = V( \bn ) / V( \bdelta )$ monomials,
where the monomial $\bu^\bt := \prod_{j = 1}^N u_j^{t_j}$
has degree $| \bn - \bdelta |$ and the sum is taken over all
semistandard Young tableau $\bt$ of shape $\bn - \bdelta$.

In particular, if $\bn_j$ and
$\bn = n_0 \bone_N^T + \bdelta$ are as in~(\ref{Erankone}) and
we let $\bn_j' := \bn_j - n_0 \bone_N^T$ then
\begin{equation}\label{Esprime}
s_{\bn_j'}( \sqrt{\rho} \, \bone_N^T )^2 = %
\rho^{M - n_0 - j} \frac{V( \bn_j )^2}{V( \bn )^2} \qquad %
\textrm{for any } \rho > 0.
\end{equation}
Furthermore, it may be shown by the hook-content formula
\cite[Theorem~7.21.2]{Stan2} that
\[
\frac{V( \bn_j )}{V( \bn )} = s_{\bn_j}( \bone_N^T ) = %
s_{\bn_j'}( \bone_N^T ) = {M \choose j}{M - j - 1 \choose N - j - 1}.
\]}
\end{definition}

\begin{theorem}[\cite{KT}]\label{Tjacobi-trudi}
\editAB{Let $S$ be a finite set of real numbers of cardinality at least $N$ and suppose
\[
F( x ) = \sum_{n \in S} c_n x^n,
\]
where each coefficient $c_n$ is real. If $\bu \in \C^N$ then
\begin{equation}\label{Ejacobi-trudi2}
\det F[ \bu \bu^* ] = \sum_{\bn \in S^N_<} | \det \bu^{\circ \bn} |^2
\prod_{n \in \bn} c_n.
\end{equation}
In particular, if the elements of $S$ are non-negative integers then
\begin{equation}\label{Ejacobi-trudi}
\det F[ \bu \bu^* ] = \sum_{\bn \in S^N_<} | s_\bn( \bu ) |^2
| V( \bu ) |^2 \prod_{n \in \bn} c_n.
\end{equation}}
\end{theorem}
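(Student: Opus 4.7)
My plan is to recognise $F[\bu \bu^*]$ as a product of three rectangular matrices and unpack the determinant via the Cauchy--Binet formula. Fix any ordering of~$S$, let $V$ denote the $N \times |S|$ matrix whose columns are the vectors $\bu^{\circ n}$ for $n \in S$, and let $D$ denote the $|S| \times |S|$ diagonal matrix with corresponding diagonal entries $c_n$. Since the $(i, j)$ entry of $F[\bu\bu^*]$ equals
\[
F( u_i \overline{u_j} ) = \sum_{n \in S} c_n u_i^n \overline{u_j}^{\,n},
\]
one obtains the factorisation $F[\bu\bu^*] = V D V^*$.

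Next, I would apply the Cauchy--Binet formula to the product $(V D) \cdot V^*$. For any $N$-subset $T$ of~$S$, indexed in increasing order by the tuple $\bn \in S^N_<$, let $V_\bn$ and $(V D)_\bn$ denote the $N \times N$ submatrices formed by the columns with indices in $T$. Since $D$ is diagonal, $\det (V D)_\bn = \bigl( \prod_{n \in \bn} c_n \bigr) \det V_\bn$, while the matching row submatrix of $V^*$ is $(V_\bn)^*$, whose determinant is $\overline{\det V_\bn}$. But by construction $V_\bn = \bu^{\circ \bn}$, so the two factors multiply to $| \det \bu^{\circ \bn} |^2 \prod_{n \in \bn} c_n$, and summation over $T$ yields identity~(\ref{Ejacobi-trudi2}).

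When the elements of $S$ are non-negative integers, identity~(\ref{Ejacobi-trudi}) then follows by substituting $\det \bu^{\circ \bn} = s_\bn( \bu ) V( \bu )$, which is immediate from the defining equation~(\ref{Dschur}) wherever the Vandermonde $V( \bu )$ is non-zero, and extends to all of $\C^N$ by continuity since both sides are polynomials in $\bu$. I anticipate no serious obstacle here: the substantive content is a correctly aimed Cauchy--Binet, with only routine bookkeeping for the indexing of $T$ as an element of $S^N_<$ and for complex conjugation within~$V^*$.
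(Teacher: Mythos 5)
Your proposal is correct and is the standard argument: the paper does not prove this result itself but cites~\cite{KT}, and the proof there is exactly this factorisation $F[\bu\bu^*] = VDV^*$ followed by Cauchy--Binet, with the passage to~(\ref{Ejacobi-trudi}) via $\det\bu^{\circ\bn} = s_\bn(\bu)V(\bu)$. The only point worth flagging is that writing $F(u_i\overline{u_j}) = \sum_{n\in S} c_n u_i^n\overline{u_j}^{\,n}$ requires integer exponents or positive real entries of $\bu$ (branch-of-power issues otherwise), but this caveat is already implicit in the statement of the theorem itself, so your argument is at the same level of generality.
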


\editAB{We state and prove a short lemma before we give the proof of
Theorem~\ref{Tlmi}.}

\begin{lemma}\label{Lpd}
Suppose $\bw \in \C^N$ \editAB{has no zero entries. If
$B \in \cP_N( \C )$ is positive definite} then so is the Schur product
$( \bw \bw^*) \circ B$.
\end{lemma}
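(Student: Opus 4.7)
The plan is to observe that this Schur product is nothing other than a diagonal congruence of $B$, which automatically preserves positive definiteness as long as the diagonal matrix is invertible.

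More precisely, let $D_\bw := \diag( w_1, \ldots, w_N )$. A direct calculation of the $( i, j )$ entry gives
\[
( D_\bw B D_\bw^* )_{i j} = w_i B_{i j} \overline{w_j} = ( w_i \overline{w_j} ) B_{i j} = \bigl( ( \bw \bw^* ) \circ B \bigr)_{i j},
\]
so $( \bw \bw^* ) \circ B = D_\bw B D_\bw^*$. Since $\bw$ has no zero entries, $D_\bw$ is invertible, hence so is~$D_\bw^*$. For any non-zero $\bv \in \C^N$, positive definiteness of~$B$ then gives
\[
\bv^* \bigl( ( \bw \bw^* ) \circ B \bigr) \bv = ( D_\bw^* \bv )^* B ( D_\bw^* \bv ) > 0,
\]
because $D_\bw^* \bv \neq \bzero$. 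This establishes the positive definiteness of $( \bw \bw^* ) \circ B$.

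There is no real obstacle here: the only content is the identification of Hadamard product against a rank-one matrix with conjugation by the corresponding diagonal matrix. No appeal to the Schur product theorem or to any of the Schur-polynomial machinery is needed.
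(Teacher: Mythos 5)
Your proof is correct and is essentially the paper's own argument: since $D_\bw^* \bv = \bv \circ \overline{\bw}$, your displayed quadratic-form identity is exactly the one used in the paper, with the diagonal congruence $( \bw \bw^* ) \circ B = D_\bw B D_\bw^*$ simply making that identity explicit. No gap; the reasoning and the use of the hypothesis that $\bw$ has no zero entries coincide with the paper's proof.
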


\begin{proof}
For any vector $\bv \neq \bzero$, we \editAB{have} that
$\bv \circ \overline{\bw} \neq \bzero$ \editAB{and therefore}
\[
\bv^* \bigl( ( \bw \bw^* ) \circ B ) \bv = %
( \bv \circ \overline{\bw} )^* B (v \circ \overline{\bw} ) > 0.
\qedhere
\]
\end{proof}

\begin{proof}[Proof of Theorem~\ref{Tlmi}]

\editAB{For part (1), we first} show that~(a) implies~(c).
Suppose~$\bu \in \C^N$ has distinct entries and is such that
$A \geq \bu \bu^*$. \editAB{Then}
$h[ \bu \bu^* ]$ is the sum of $N$ rank-one matrices with
linearly independent column spaces, since the 
\editAB{determinant of the matrix $( u_k^{n_0 + l - 1} )_{k, l = 1}^N$
is the product of a Vandermonde determinant and
$\prod_{k = 1}^N u_k^{n_0}$; recall that we take $0^0 = 1$.}
Thus, $h[ \bu \bu^* ]$ is non-singular and
so positive definite. As \editAB{noted above, entrywise powers
of non-negative integers are Loewner monotone on $\cP_N$, so}
$h[ A ] \geq h[ \bu \bu^* ]$
\editAB{and} $h[ A ]$ is also positive definite,
\editAB{by the remark after Proposition~\ref{Ppos-constant}.}

Next, \editAB{we note that (c) implies (b) because} the contrapositive is immediate.
\editAB{We now suppose that~(b) holds and deduce (a). Let
$\bu_1^T$, \ldots, $\bu_N^T$} denote the rows
of $A$. As~$\C^N$ is not a finite union of proper subspaces, we can
choose a vector $\bv \in \C^N$ that is not
orthogonal to \editAB{any vector of the form}
$\bu_j - \bu_k$ \editAB{with} $j \neq k$
nor any vector $\bu_j$ \editAB{that is} non-zero.
We set $\bw := A \overline{\bv}$ and
note that $\bw$ has distinct entries by the choice of $\bv$;
moreover, $\bw$ has a zero entry if and only if the corresponding
row of $A$ is zero. 
By Proposition~\ref{Ppos-constant}, \editAB{we have that}
$A \geq t \bw \bw^*$ for some
scalar~$t > 0$, so (a) follows by setting
$\bu := \sqrt{t} \bw$.

\editAB{Finally, that (c) implies~(d) follows from the remark after
Proposition~\ref{Ppos-constant} with $C = f[ A ]$ and $D = h[ A ]$
when $c' \geq 0$, and from} Corollary~\ref{Clmi} and
Lemma~\ref{Llmi} with \editAB{$C = h[ A ]$ and $D = \cC^{-1} A^{\circ M}$
when $c' < 0$. Conversely, that (d) implies~(c) follows from the same remark
when $c' \leq 0$, while if $c' > 0$, the implication follows from
Lemma~\ref{Llmi} with $C = f[ A ]$ and $D = ( c' + \cC^{-1} ) A^{\circ M}$,
together with Corollary~\ref{Clmi}.} This concludes the proof of part~(1).

To prove part~(2), we first show the rank-one case: if $A = \bu \bu^*$
for some column vector $\bu \in \overline{D}( 0, \sqrt{\rho} )^N$ 
\editAB{and $A$ has a row with distinct entries then $\bu$ has
distinct entries} and $g[ A ]$ is positive definite.

Suppose for contradiction that $\det g[ \bu \bu^* ] = 0$, and note
that, by specialising~(\ref{Ejacobi-trudi}) to the given parameters
\editAB{and using the fact that
$s_{\bn}( \bu ) = \prod_{j = 1}^N u_j^{n_0}$,}
\[
\sum_{j = 0}^{N - 1} \frac{| s_{\bn_j}( \bu ) |^2}{c_j} = %
\cC \prod_{j = 1}^N | u_j |^{2 n_0} = %
\prod_{j = 1}^N | u_j |^{2 n_0} %
\sum_{j = 0}^{N - 1} %
\frac{s_{\bn'_j}( \sqrt{\rho} \, \bone_N^T )^2}{c_j},
\]
\editAB{by (\ref{Esprime}),} where the partition $\bn_j$ is as
in~(\ref{Erankone}) \editAB{and} $\bn'_j := \bn_j - n_0 \bone_N^T$.

\editAB{We note from} the definitions that
$s_{\bn_j}( \bu ) = s_{\bn'_j}( \bu ) \prod_{j=1}^N u_j^{n_0}$.
\editAB{It now follows from} the triangle inequality and the
\editAB{Littlewood identity (\ref{Elittlewood})} that
\[
| s_{\bn'_j}( \bu ) |^2 \leq %
s_{\bn'_j}( \sqrt{\rho} \, \bone_N^T )^2,
\]
since $\bu \in \overline{D}( 0, \sqrt{\rho} )^N$, \editAB{and therefore}
$| s_{\bn'_j}( \bu ) | = s_{\bn'_j}( \sqrt{\rho} \, \bone_N^T ) = %
\editAB{\rho^{( M - n_0 - j ) / 2} V( \bn_j ) / V( \bn )}$
for all $j$.
\editAB{Another} application of the triangle inequality
implies that all monomials $\bu^\bt$ in the sum for $s_{\bn'_j}( \bu )$
have modulus $\rho^{( M - n_0 - j ) / 2}$ and so are equal
\editAB{(since the identity
$| z_1 + \cdots + z_n | = | z_1 | + \cdots + | z_n |$ implies that
the non-zero complex numbers $z_1$, \ldots, $z_n$ have the same argument).
Furthermore, as each entry of $\bu$ appears in some monomial, none of the
entries is zero.}

\editAB{If $M > n_0 + N$ then
$u_1^{M - n_0 - N} u_2 \cdots u_{N - j} u_k$}
is a monomial that occurs in the
\editAB{Littlewood formula for} $s_{\bn'_j}( \bu )$ for $k = 1$, \ldots, $N$,
and it follows that $u_1$, \ldots, $u_N$ are all equal.
\editAB{The edge case $M = n_0 + N$ must be dealt with separately,
but in this case $s_{\bn_j'}( \bu )$ is the sum of all monomials made up of
$N - j$ distinct entries of $\bu$ and the same conclusion holds.}
This contradicts the assumption that the entries of $\bu$ are
distinct, showing that $g[ \bu \bu^* ]$ is indeed positive definite.

Now suppose $A$ has a row $\bv^*$ with distinct entries; in particular,
the diagonal entry~$v'$ in~$\bv$ is real and positive. Set
$\bu := \bv / \sqrt{v'}$ and note that $A - \bu \bu^*$
has a zero row and column. If
\[
p_t[ B; R, \bd ] := t ( d_0 \one{N} + d_1 B + \cdots + %
d_{n - 1} B^{\circ ( N - 1 )} ) - B^{\circ ( N + R )}
\]
for \editAB{any} $\bd = ( d_0, \ldots, d_{N - 1} )$, then
\cite[(3.16)]{BGKP-fixeddim} yields \editAB{the identity}
\begin{align*}
p_t[ A; M - N, \bc ] & = p_t[ \bu \bu^*; M - N, \bc ] \\
 & + \int_0^1 ( A - \bu \bu^* ) \circ %
M p_{t / M}[ \lambda A + ( 1 - \lambda ) \bu \bu^*; M - N, \bc' ] %
\std \lambda,
\end{align*}
where $\bc' := ( c _1, 2 c_2, \ldots, ( N - 1 ) c_{N - 1} )$ and both
terms on the right-hand side are positive semidefinite,
\editAB{by \cite[(3.7)]{BGKP-fixeddim}}. Thus,
if~$t = \cC$ and $g_0( z ) := z^{-n_0} g( z )$ then
\[
g_0[ A ] = t^{-1} p_t[ A; M - N - n_0, \bc ] \geq %
t^{-1} p_t[ \bu \bu^*; M - N - n_0, \bc ] = g_0[ \bu \bu^* ],
\]
which is positive definite by the previous rank-one case. Thus $g_0[A]$
is positive definite, which completes the proof of part~(2) if $n_0 = 0$.
Otherwise, $n_0 > 0$ and all \editAB{the entries of~$\bu$} are non-zero
by hypothesis. In this case, the following calculation implies
\editAB{that the conclusion of part~(2) holds}:
\[
g[ A ] = A^{\circ n_0} \circ g_0[ A ] \geq %
( \bu^{\circ n_0} ( \bu^{\circ n_0})^* ) \circ g_0[ A ]
\]
and the right-hand side is positive definite by applying
\editAB{Lemma~\ref{Lpd}} with $\bw = \bu^{\circ n_0}$ and
$B = g_0[ A ]$.

The final assertion is immediate \editAB{when} $N = 1$, \editAB{so}
we conclude by showing equality does not hold in
(\ref{Elmi}) \editAB{whenever} $N > 1$ and $A \neq \bzero_{N \times N}$.
\editAB{As $A$ is positive semidefinite, some entry $x$ on the diagonal of $A$ is positive. Suppose}
$\bu \in ( 0, \sqrt{\rho} )^N$ \editAB{has} distinct entries,
one of which is~$\sqrt{x}$. The matrix $g[ \bu \bu^* ]$ is positive definite by
part~(2), so $g( x ) > 0$. Now equality holds in~(\ref{Elmi}) if and only
if $g[ A ] = 0$, but this working shows that at least one entry on the
main diagonal of $g[ A ]$ is strictly positive.
\end{proof}

Analogously to Theorem~\ref{Tlmi}, one has the following result for the
other test sets $\cP_\rho$ above.

\begin{theorem}\label{Tlmi2}
\editAB{Let $f$ and $\cP_0$ be as in Definition~\ref{Dsetup}(1--3)
and suppose $c_0$, \ldots, $c_{N - 1} > 0$ and $c' > -\cC^{-1}$,
where $\cC$ is as in (\ref{DC}).}
\begin{enumerate}
\item \editAB{Let $A \in \cP_0$ and suppose $n_0 = 0$
if $A$ has a zero row. The following are equivalent.}
\begin{enumerate}
\item There exists a vector $\bu \in  [ 0, \sqrt{\rho} ]^N$
with distinct entries such that $A \geq \bu \bu^*$ and
\editAB{$\bu$ has a zero entry if and only if $A$ has a zero row.}

\item All \editAB{of the} rows of $A$ are \editAB{distinct}.

\item The matrix $h[ A ]$ is positive definite.

\item The inequality~(\ref{Elmi}) is strict, that is, $f[ A ]$ is
positive definite.
\end{enumerate}
Moreover, (c) is equivalent to (d).
\item Suppose $A \in \cP_0$ has a row with distinct entries and $n_0 = 0$ if any
entry in this row is zero. Then $g[ A ]$ is positive definite.
\end{enumerate}
\editAB{Furthermore,} equality in~(\ref{Elmi}) is attained on~$\cP_0$
\editAB{if and only if either $N = 1$ and $A = \rho$, or
$n_0 > 0$ and $A = \bzero_{N \times N}$.}
\end{theorem}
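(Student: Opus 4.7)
The plan is to mirror the structure of the proof of Theorem~\ref{Tlmi}, namely the cycle (a)$\Rightarrow$(c)$\Rightarrow$(b)$\Rightarrow$(a) followed by the separate equivalence (c)$\Leftrightarrow$(d), with three adjustments to accommodate the settings of Definition~\ref{Dsetup}(1--3). First, the powers $n_0,\ldots,n_{N-1}$ may be arbitrary reals (in cases~(1),~(2)) rather than successive integers, so the linear independence of the vectors $\bu^{\circ n_j}$ appearing in $h[\bu\bu^*]$ will be justified via the generalised Vandermonde identity (valid for distinct real exponents evaluated at distinct positive reals), with the single possible zero entry of $\bu$ matched by the assumption $n_0 = 0$ and the convention $0^0 = 1$. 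Second, to transfer positive definiteness from $h[\bu\bu^*]$ to $h[A]$ in case~(3), the Loewner monotonicity of entrywise powers on $\cP_0$ is precisely the property guaranteed by the footnote after Definition~\ref{Dsetup}(3); in cases~(1) and~(2) the matrix $A$ is itself rank-one and the argument simplifies accordingly. The contrapositive (c)$\Rightarrow$(b) is immediate as before.

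The essential new ingredient is (b)$\Rightarrow$(a). In the complex case, one produced $\bu = \sqrt{t}\, A \overline{\bv}$ for a generic $\bv \in \C^N$, exploiting that $\C^N$ is not a finite union of proper subspaces. In the real non-negative setting this construction need not yield a non-negative vector. Instead, I would invoke Theorem~\ref{Tdom}, the Perron--Frobenius-based rank-one lower bound advertised in the introduction, which delivers exactly the required $\bu \in [0,\sqrt{\rho}]^N$: distinct entries, zero entries matching zero rows of $A$, and $A \geq \bu\bu^*$. The equivalence (c)$\Leftrightarrow$(d) then follows verbatim from the proof of Theorem~\ref{Tlmi}(1) using Corollary~\ref{Clmi} and Lemma~\ref{Llmi}; this is what the ``Moreover'' clause is highlighting, since (c)$\Leftrightarrow$(d) does not depend on the zero-row hypothesis that couples (a)--(c) together.

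For part~(2), I would reduce the general case to the rank-one case exactly as in the proof of Theorem~\ref{Tlmi}(2), using the integral identity \cite[(3.16)]{BGKP-fixeddim} and Lemma~\ref{Lpd}. The rank-one step consists in showing that $\det g[\bu\bu^*] > 0$ whenever $\bu \in [0,\sqrt{\rho}]^N$ has distinct entries (and no zero entry when $n_0 > 0$). For this I would use the Jacobi--Trudi-type formula~(\ref{Ejacobi-trudi2}), which is stated for arbitrary real powers, combined with the sharpness of the Vandermonde-ratio sum defining $\cC$ in~(\ref{DC}). The closing assertion about equality in~(\ref{Elmi}) follows as in the proof of Theorem~\ref{Tlmi}: for $N > 1$ some diagonal entry of a non-zero $A$ is strictly positive, and rank-one positivity from part~(2) then produces a strictly positive diagonal entry of $g[A]$.

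I expect the main obstacle to be the rank-one case of part~(2) in cases~(1) and~(2), where the powers $n_j$ are arbitrary reals rather than non-negative integers. The corresponding step in the proof of Theorem~\ref{Tlmi}(2) relied on the Littlewood monomial expansion~(\ref{Elittlewood}) of Schur polynomials and a delicate triangle-inequality argument to rule out the equality case; this tool is unavailable when $n_j \notin \Z_+$. A substitute argument will have to extract strict positivity directly from~(\ref{Ejacobi-trudi2}) by expanding each $|\det \bu^{\circ \bn_j}|^2$ via Leibniz's formula and exploiting the distinctness of the positive real entries of $\bu$ to rule out the sole equality configuration.
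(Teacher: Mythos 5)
Your skeleton matches the paper's proof closely: the cycle (a)$\Rightarrow$(c)$\Rightarrow$(b)$\Rightarrow$(a) with the generalised Vandermonde non-vanishing for real exponents handling (a)$\Rightarrow$(c) (plus Loewner monotonicity in setting~(3)), Theorem~\ref{Tdom} supplying (b)$\Rightarrow$(a), the unchanged (c)$\Leftrightarrow$(d) argument, and the reduction of part~(2) to the rank-one case via \cite[(3.16)]{BGKP-fixeddim} and Lemma~\ref{Lpd} are all exactly what the paper does. The problem is the step you yourself flag as the main obstacle: the rank-one case of part~(2) with real exponents. There you propose to expand each $|\det \bu^{\circ \bn_j}|^2$ in (\ref{Ejacobi-trudi2}) by Leibniz's formula and ``rule out the sole equality configuration'' using distinctness of the entries of $\bu$. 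This is not a proof, and it is unlikely to become one: the Leibniz expansion of a generalised Vandermonde determinant is a signed sum, so there is no termwise triangle-inequality comparison available, and in particular no visible route from such an expansion to the sharp constant $\rho^{|\bn_j - \bn|} V(\bn_j)^2 / V(\bn)^2$ that each ratio $\det(\bu^{\circ \bn_j})^2 / \det(\bu^{\circ \bn})^2$ must be compared against.

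The paper's mechanism, which you do not invoke, is Theorem~\ref{Tkt} (Schur strict monotonicity lemma~1), which is stated for arbitrary real tuples $\bm \leq \bn$ precisely so that it applies in settings (1)--(3): supposing $\det g[\bu\bu^*] = 0$ and specialising (\ref{Ejacobi-trudi2}) yields the equality (\ref{Eratio}), while Theorem~\ref{Tkt} gives that each summand $\det(\bu^{\circ \bn_j})^2 / \bigl( c_j \det(\bu^{\circ \bn})^2 \bigr)$ is \emph{strictly} less than $\rho^{|\bn_j - \bn|} V(\bn_j)^2 / \bigl( c_j V(\bn)^2 \bigr)$ for $\bu \in (0,\sqrt{\rho}\,]^N_{\neq}$, a contradiction. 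Indeed, the entire point of Section~\ref{SSstrict} is to make this one step work, and the strictness and the explicit upper bound of Theorem~\ref{Tkt} are exactly the missing ingredients in your outline. You also omit the boundary subcase where $\bu$ has a zero entry (permitted when $n_0 = 0$): the paper handles it separately by noting that $\det \bu^{\circ \bn_0} = 0$ and reducing the remaining ratios to the truncated vector $\bu_\times \in (0,\sqrt{\rho}\,]^{N-1}_{\neq}$, together with the comparison $V(\bn_j')/V(\bn') \leq V(\bn_j)/V(\bn)$. So the structure of your argument is right, but the crucial strict-inequality step of part~(2) is a genuine gap that should be closed by applying Theorem~\ref{Tkt} rather than a Leibniz expansion.
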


This is proved presently.

\subsection{Stronger Schur monotonicity lemmas}\label{SSstrict}

The proof of Theorem~\ref{Tlmi2} relies on the following strengthening of
the Schur monotonicity lemma above, Theorem~\ref{TSchurmon}.
\editAB{As above, for any set of real numbers} $S$,
denote by $S^N_{\neq}$ the set of all $N$-tuples of distinct elements
of~$S$ and by $S^N_<$ its subset of $N$-tuples with increasing entries.

\begin{theorem}[Schur strict monotonicity lemma 1]\label{Tkt}
Fix an integer $N \geq 1$ and \editAB{distinct} $N$-tuples
$\bm = ( m_0 < \cdots < m_{N - 1} )$ and
$\bn = ( n_0 < \cdots < n_{N-1} )$
in $\R^N_<$ such that $m_j \leqslant n_j$ for all $j$. \editAB{The}
symmetric function 
\[ 
f : ( 0, \infty )^N_{\neq} \to \R; \ \bu \mapsto %
\frac{\det \bu^{\circ \bn}}{\det \bu^{\circ \bm}}
\]
is strictly increasing in each coordinate \editAB{and,
for any $\rho \in ( 0, \infty )$, is}
bounded above by
the constant $\rho^{| \bn - \bm | / 2} V( \bn) / V( \bm )$
on~$( 0, \sqrt{\rho} ]^N_{\neq}$.
\editAB{Furthermore,  if $m_0 = n_0 = 0$ then
$f$ is well defined on $[ 0, \sqrt{\rho} ]^N_{\neq}$
and these two properties hold there.}
\end{theorem}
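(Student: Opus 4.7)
The plan is to upgrade the non-strict Theorem~\ref{TSchurmon} to strict monotonicity via a real-analyticity argument on each one-variable slice, then derive the upper bound by symmetry and specialisation, and finally transport everything to the closed boundary when $m_0 = n_0 = 0$.

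For strict monotonicity, I would fix an index $i \in \{1, \ldots, N\}$, freeze all other coordinates at distinct positive values, and analyse the restriction $g(t) := f(u_1, \ldots, t, \ldots, u_N)$. Expanding both determinants along row $i$ presents $g$ as a quotient $N(t)/D(t)$, where $N(t) = \sum_{k=1}^N (-1)^{i+k} \tilde A_k\, t^{n_{k-1}}$ and $D(t) = \sum_{l=1}^N (-1)^{i+l} \tilde B_l\, t^{m_{l-1}}$ with $\tilde A_k$ and $\tilde B_l$ the respective $(i,k)$- and $(i,l)$-minors. Both $N$ and $D$ are finite Newton series, hence real-analytic on $(0,\infty)$, and $D$ is nonzero on each connected component of the slice domain $(0,\infty) \setminus \{u_j : j \neq i\}$ because $\det \bu^{\circ \bm}$ is a generalised Vandermonde determinant with distinct positive bases and distinct real exponents. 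Theorem~\ref{TSchurmon} gives $g$ non-decreasing; were $g$ not strictly increasing on some component, it would be constant on a subinterval, hence by real-analyticity constant on the entire component, so $N(t) = c D(t)$ on an open set and therefore on all of $(0,\infty)$ by analytic continuation of Newton series.

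The key step is then to invoke the linear independence of $\{t^\alpha\}_{\alpha \in \R}$ on $(0,\infty)$ and compare coefficients. Since $\bm \neq \bn$ are increasing tuples of the same length, $\bn \setminus \bm$ is nonempty; for any $n_{k-1} \in \bn \setminus \bm$, the coefficient of $t^{n_{k-1}}$ in $N(t) - c D(t)$ receives no contribution from $D$, forcing $\tilde A_k = 0$. But $\tilde A_k$ is itself an $(N-1) \times (N-1)$ generalised Vandermonde determinant with distinct positive bases $\{u_j : j \neq i\}$ and distinct real exponents $\{n_{l-1} : l \neq k\}$, so it is nonzero---a contradiction. Thus $g$ is strictly increasing on each connected component. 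To bridge across the crossings $t = u_j$ in the same slice, I would use that $f$ extends continuously to $(0,\infty)^N$: each of $\det \bu^{\circ \bn}$ and $\det \bu^{\circ \bm}$ is antisymmetric under swapping two coordinates, and the identity $u_j^\alpha - u_i^\alpha = (u_j - u_i) \int_0^1 \alpha (u_i + s (u_j - u_i))^{\alpha - 1} \std s$ lets the common factor $u_j - u_i$ cancel cleanly in the ratio. Continuity of the extension together with component-wise strict increase yields strict increase on the whole $u_i$-slice of $(0, \infty)^N_{\neq}$.

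For the upper bound on $(0, \sqrt{\rho}]^N_{\neq}$, symmetry of $f$ together with strict coordinate-wise monotonicity gives $f(\bu) \leq \lim_{\bu \to \sqrt{\rho} \, \bone_N^T} f(\bu) = \rho^{|\bn - \bm|/2} V(\bn)/V(\bm)$, the limit being evaluated by the specialisation $s_\bn(u \bone_N^T) = u^{|\bn - \bdelta|} V(\bn)/V(\bdelta)$ recorded after~(\ref{Eproduct}). When $m_0 = n_0 = 0$, the tuples have non-negative entries, so at a boundary point with $u_j = 0$, expanding along the row $(1, 0, \ldots, 0)$ of $\bu^{\circ \bn}$ and $\bu^{\circ \bm}$ yields a nonzero generalised Vandermonde on the remaining positive coordinates; hence $f$ extends continuously to $[0, \sqrt{\rho}]^N_{\neq}$, and the preceding analytic argument applies verbatim because all exponents are now non-negative and each $t \mapsto t^{n_{k-1}}$ is continuous at $t = 0$. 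The main obstacle will be the continuous extension of $f$ across the diagonals $\{u_i = u_j\}$, which is required to promote strict monotonicity on each slice component to strict monotonicity across the full slice; a subsidiary point is the clean extraction of a pure $\bn \setminus \bm$ coefficient, which is what makes the analyticity argument succeed irrespective of overlap between $\bn$ and $\bm$.
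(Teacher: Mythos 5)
Your strict-monotonicity argument is essentially the paper's: assume failure, get constancy of a one-variable slice on a subinterval, and contradict linear independence of the functions $t \mapsto t^{\alpha}$ (the paper substitutes $t = e^y$ and uses exponentials) using the fact that the cofactors are non-vanishing generalized Vandermonde determinants in the frozen, distinct, positive coordinates. The treatment of the boundary case $m_0 = n_0 = 0$ (expand along the row $(1,0,\ldots,0)$, reduce to $N-1$ variables, and handle the zero coordinate by a one-sided limit) also matches the paper. Two remarks on the slice argument: your bridge across the excluded crossings $t = u_j$ leans on a continuous extension of $f$ across coincident coordinates, and for \emph{real} exponents the ``clean cancellation'' of the factor $u_j - u_i$ only gives continuity of the reduced numerator and denominator; you still need the reduced (confluent) generalized Vandermonde in the denominator to be non-zero at the coincidence, which is a Chebyshev-system fact you do not justify. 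This is avoidable: as in the paper, if $f$ takes equal values at two admissible points of a slice, non-strict monotonicity (Theorem~\ref{TSchurmon}) squeezes $f$ to be constant on any subinterval between them avoiding the crossings, so no extension across the diagonal is ever needed.

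The genuine gap is in the upper bound. You write $f(\bu) \leq \lim_{\bu \to \sqrt{\rho}\,\bone_N^T} f(\bu)$ and evaluate the limit by the specialisation $s_\bn( u \bone_N^T ) = u^{|\bn - \bdelta|} V(\bn)/V(\bdelta)$, but that identity is a Schur-polynomial statement requiring non-negative integer exponents, whereas Theorem~\ref{Tkt} allows arbitrary real tuples $\bm, \bn$, for which no polynomial $s_\bn$ exists; moreover the point $\sqrt{\rho}\,\bone_N^T$ lies outside the domain $(0,\sqrt{\rho}]^N_{\neq}$, both determinants vanish there, and you assume without proof that the limit exists (directional independence is exactly the delicate confluent-Vandermonde issue above). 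The paper's route repairs this: dominate $\bu$ coordinatewise, after relabelling, by a geometric tuple $\bv(\epsilon) = \sqrt{\rho}\,(1,\epsilon,\ldots,\epsilon^{N-1})$ with $\epsilon \in (0,1)$; then $f(\bv(\epsilon)) = \rho^{|\bn-\bm|/2}\, V(\epsilon^{\bn})/V(\epsilon^{\bm})$ by the Vandermonde product identity (which, unlike the Schur specialisation, is valid for real exponents since it is a Vandermonde in the quantities $\epsilon^{n_j}$), this quantity is non-decreasing in $\epsilon$ by Theorem~\ref{TSchurmon}, and its limit as $\epsilon \to 1^-$ is $\rho^{|\bn-\bm|/2}\, V(\bn)/V(\bm)$ by L'H\^opital applied factorwise. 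Without some such argument your bound is only established when $\bm$ and $\bn$ are integer tuples.
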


As announced in Theorem~\ref{T2}, \editAB{an extension
of this result holds for Schur polynomials. This will be stated and
proved below,} after the proof \editAB{of the present theorem}.
We state and prove \editAB{the extended} result separately, because
\editAB{the} behavior of $f$ on the boundary of the orthant is
\editAB{somewhat delicate}.

\begin{proof}
We begin by showing the result on $( 0, \sqrt{\rho} ] ^N_{\neq}$
\editAB{for arbitrary $\rho \in ( 0, \infty )$.}
The first step is to prove that $f$ is strictly increasing in each
coordinate, say in $u_N$. If not, then by Theorem~\ref{TSchurmon},
the function $f$ is constant on
$( u_1, \ldots, u_{N - 1} ) \times [ x, x' ]$ for some
\editAB{$x$, $x' \in ( 0, \sqrt{\rho} ]$ with $x' < x$,}
and we may \editAB{shrink this interval to ensure that $u_j \not\in [ x, x ']$
for $j \neq N$. The function
\[
h : [ \log x, \log x' ] \to \R; \ y %
\mapsto f( u_1, \ldots, u_{N - 1}, e^y ) = %
\frac{\sum_{j = 0}^{N - 1} g_j e^{n_j y}}%
{\sum_{j = 0}^{N - 1} g_j' e^{m_j y}}
\]
is constant, and}
$g_j$ and $g_j'$ are generalized Vandermonde determinants
in $u_1$, \ldots, $u_{N - 1}$ for any $j$, \editAB{so are non-zero.
Since functions of the form $y \mapsto e^{\lambda y}$ are linearly
independent for distinct real $\lambda$, this implies that $\bm = \bn$,
contrary to our initial assumption.

Next, we note that} any vector in $( 0, \sqrt{\rho} ]^N_{\neq}$ is
coordinatewise bounded above
(up to relabeling co\-ordinates) by a vector of the form
\editAB{$\bv = \bv( \epsilon ) := %
\sqrt{\rho} ( 1, \epsilon, \ldots, \epsilon^{N - 1} )^T$,
where $\epsilon \in ( 0, 1 )$.}
Hence, \editAB{by~\ref{TSchurmon}} and~(\ref{Eproduct}),
\[
\frac{\det( \bu^{\circ \bn} )}{\det( \bu^{\circ \bm} )} \leq %
\editAB{\frac{\det( \bv^{\circ \bn} )}{\det( \bv^{\circ \bm} )} = %
\rho^{| \bn - \bm | / 2}} \frac{V( \epsilon^{\bn} )}{V( \epsilon^{\bm} )},
\]
\editAB{where
$\epsilon^{\bn} := ( \epsilon^{n_{i - 1}} )_{i = 1}^N$.
It now} suffices to show \editAB{that}
$V( \epsilon^{\bn} ) / V( \epsilon^{\bm} )$ is bounded above on
$( 0, 1 ]$ by $V( \bn ) / V( \bm )$. As this ratio is non-decreasing in $\epsilon$,
by \editAB{Theorem~\ref{TSchurmon}, the least upper bound
will equal the limit as $\epsilon \to 1^-$, if it exists, but this limit is as claimed,
by L'H\^opital's rule.}
This shows the result on $( 0, \sqrt{\rho} ]^N_{\neq}$.

We now show that $f$ is \editAB{well defined and} strictly increasing at
$\bu \in [ 0, \rho ]^N_{\neq}$, where one coordinate of
$\bu$, say~$u_1$, is zero. Then $m_0 = n_0 = 0$ by assumption,
so the matrices $\bu^{\circ \bm}$ and $\bu^{\circ \bn}$ both have
first row \editAB{$\be_1 := ( 1, 0, \ldots, 0 )$.}
Now if $\bv_1$ denotes the truncation of the vector~$\bv$ by removing its first coordinate, then
\[
f( \bu ) = \frac{\det (\bu_1^{\circ \bn_1})}{\det (\bu_1^{\circ \bm_1})},
\]
by expanding both determinants along their first rows;
\editAB{in particular, $f( \bu )$ is well defined. As}
$\bu_1 \in (0,\sqrt{\rho}]^{N-1}_{\neq}$,
\editAB{the previous working implies that} the
right-hand side is strictly increasing in the coordinates of $\bu_1$,
\editAB{that is,} in all but the first coordinate of $\bu$,
\editAB{and has the requisite upper bound.}

Finally, say $\nu > 0$ and
$\bv := \bu + \nu \be_1 \in ( 0, \infty )^N_{\neq}$;
\editAB{we wish to show that} $f( \bv ) > f( \bu )$.
We may assume that $\nu < \min\{ u_2, \ldots, u_N \}$, by
\editAB{transitivity and the previous working.  Hence}
$\bv( t ) := \bu + t \be_1$ \editAB{is well defined for any
$t \in [ 0, \bu ]$ and we see that}
\[
f( \bv ) = f\bigl( \bv( \nu ) \bigr) >  f\bigl( \bv( \nu / 2 ) \bigr) > %
f\bigl( \bv( t ) \bigr) \qquad \textrm{for any } t \in ( 0, \nu ).
\]
Taking the limit as $t \to 0^+$, it follows that
\[
f(\bv) > \lim_{t \to 0^+} f\bigl( \bv( t ) \bigr) = %
f\bigl( \bv( 0 ) \bigr) = f( \bu ),
\]
\editAB{as desired.}
\end{proof}

The next result is the analogue of Theorem~\ref{Tkt} for ratios of Schur
polynomials on the positive orthant. Given
\editAB{Theorem~\ref{TSchurmon} and} the preceding Theorem~\ref{Tkt},
it is natural to ask if strict monotonicity extends to the boundary of
the orthant $[ 0, \infty )^N$. The following remark \editAB{explains why
this cannot happen and why Theorem~\ref{Tkt2} is the best possible
result that may be obtained.}

\begin{remark}\label{Rfails}
Here we describe two ways in which the coordinatewise monotonicity of the
Schur-polynomial ratio $s_\bn / s_\bm$ fails to extend to strict
monotonicity on all of $[ 0, \infty )^N \setminus \{ \bzero \}$. 

\editAB{Suppose $\bm \in ( \Z_+ )^N_<$ is such that $\bm - \bdelta$}
has exactly $N - k$ non-zero entries, \editAB{where $0 \leq k \leq N$.
Then}
$s_\bm( \bu )$ vanishes \editAB{whenever}
$u_1 = \cdots = u_{k + 1} = 0$, \editAB{so for} every
vector $\bu \in [ 0, \infty )^N$ with at least $k + 1$ coordinates equal to zero.
This is because every semi-standard Young tableau of shape $\bm - \bdelta$
necessarily contains at least one entry \editAB{in the set
$\{ 1, \ldots, k + 1 \}$.} Thus, the ratio $s_\bn( \bu ) / s_\bm( \bu )$
\editAB{has} domain of definition
\begin{equation}\label{EUk}
\cU_k := \{ \bu \in [ 0, \infty )^N : %
\textrm{at most $k$ coordinates of $\bu$ are } 0 \},
\end{equation}
\editAB{as some of monomials in the Littlewood identity (\ref{Elittlewood})
must be non-zero when $\bu \in \cU_k$.

Even restricted} to the domain $\cU_k$, the function
$\bu \mapsto s_\bn( \bu ) / s_\bm( \bu )$ need not be
strictly increasing \editAB{in each coordinate. If}
$\bn - \bdelta$ has \editAB{exactly $l$ zero entries
and} $\bm - \bdelta$ \editAB{has exactly $k$ zero entries,
with $l < k$, then $s_\bn( \bu )$ vanishes whenever
$l + 1$ or more coordinates of $\bu$ are zero,
so $f( \bu ) = s_\bn( \bu ) / s_\bm( \bu )$
vanishes whenever $\bu$ has between $l + 1$
and $k$ coordinates equal to $0$.}
In particular, \editAB{the function $f$ cannot be
strictly increasing on the collection of all such vectors}.
\end{remark}

Given \editAB{the understanding of obstructions to strict monotonicity afforded by} Remark~\ref{Rfails}, we now state and prove the
\editAB{strongest-possible monotonicity result} for ratios of Schur polynomials
\editAB{on the closed orthant $[ 0, \infty )^N$.}

\begin{theorem}[Schur strict monotonicity lemma 2]\label{Tkt2}
Fix an integer $N \geq 1$ and \editAB{distinct $N$-tuples}
$\bm = ( m_0 < \cdots < m_{N - 1} )$ and
$\bn = ( n_0 < \cdots < n_{N - 1} )$ in $( \Z_+ )^N_<$ such that
$m_j \leqslant n_j$ for all $j$.
\begin{enumerate}
\item The symmetric function 
\[
f : ( 0, \infty )^N \to \R; \ \bu \mapsto %
\frac{s_\bn( \bu )}{s_\bm( \bu )}
\]
is strictly increasing in each \editAB{coordinate and,
for any $\rho \in ( 0, \infty )$, is bounded above by
the constant
$\rho^{| \bn - \bm | / 2} V( \bn) / V( \bm ) = f( \sqrt{\rho} \bone_N^T )$
on $( 0, \sqrt{\rho} ]^N$.}

\item Suppose \editAB{that} $n_j = j$ for $j = 0$, \ldots, $k - 1$ but
$n_k > k$, \editAB{where $0 \leq k \leq N$
and the final condition holds vacuously if $k = N$.}
Then $f$ is non-decreasing in each coordinate on its
\editAB{extended} domain of definition
$\cU_k$ \editAB{given by}~(\ref{EUk}).

\item Suppose that $m_j = m_j = j$ for $j = 0$, \ldots, $k - 1$ and
$m_k$, $n_k > k$, \editAB{where $0 \leq k \leq N$
and the final condition holds vacuously if $k = N$.}
Then $f$ is strictly increasing in each
coordinate \editAB{on} $\cU_k$.
\end{enumerate}
\end{theorem}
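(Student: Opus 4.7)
My plan is to treat the three parts of Theorem~\ref{Tkt2} in sequence, combining three tools: the conclusion of Theorem~\ref{Tkt} on the open set $(0,\infty)^N_{\neq}$; Littlewood's identity~(\ref{Elittlewood}) to establish positivity of $s_\bm$ and to factorise Schur polynomials along coordinate hyperplanes; and the elementary observation that a real-analytic non-decreasing function that is non-constant must be strictly increasing, since its derivative is a non-negative real-analytic function with isolated zeros. The principal obstacle will be strict monotonicity in a zero coordinate for Part~(3), which I plan to resolve by an induction on $k$ coupled with this last observation.

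For Part~(1), cancelling $V(\bu)$ from numerator and denominator shows that, on $(0,\infty)^N_{\neq}$, the Schur ratio $s_\bn(\bu)/s_\bm(\bu)$ agrees with the generalised Vandermonde ratio $\det \bu^{\circ \bn}/\det \bu^{\circ \bm}$, which is strictly increasing in each coordinate by Theorem~\ref{Tkt}. Since every monomial in the Littlewood sum for $s_\bm$ is positive on $(0,\infty)^N$, the ratio $f$ is continuous and locally real-analytic there. For any $\bu \in (0,\infty)^N$ and any index $j$, the map $t \mapsto f(\bu + t\be_j)$ is a rational function with non-vanishing denominator on $[0,\infty)$; it is strictly increasing on the cofinite subset of $t$ where $\bu + t\be_j$ has distinct coordinates, hence non-constant, hence strictly increasing on the whole of $[0,\infty)$. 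The upper bound follows from the explicit value $f(\sqrt{\rho}\bone_N^T) = \rho^{|\bn - \bm|/2}V(\bn)/V(\bm)$, obtained from the homogeneity of Schur polynomials and the limit $q \to 1$ in~(\ref{Eproduct}), together with coordinate-wise monotonicity.

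For Part~(2), the hypothesis $n_j = j$ for $j < k$, combined with $m_j \leq n_j$ and the bound $m_j \geq j$ that holds automatically for every $\bm \in (\Z_+)^N_<$, forces $m_j = j$ for $j < k$ as well. Hence $\bm - \bdelta$ has at least $k$ leading zeros, so the SSYT argument of Remark~\ref{Rfails} yields $s_\bm > 0$ on $\cU_k$, and $f$ extends continuously to $\cU_k$. The non-decreasing property then follows by applying Part~(1) to the perturbations obtained by adding $\epsilon > 0$ to each coordinate of $\bu$ and $\bv = \bu + t\be_j$ (which brings them into $(0,\infty)^N$) and letting $\epsilon \to 0^+$.

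For Part~(3), both $\bm - \bdelta$ and $\bn - \bdelta$ have exactly $k$ leading zeros, so $s_\bm$ and $s_\bn$ are strictly positive throughout $\cU_k$; thus $f$ is continuous and locally real-analytic on $\cU_k$. I induct on $k$, the base case $k = 0$ being Part~(1). The crucial ingredient is a factorisation derived from Littlewood's identity: if $\bu \in \cU_k$ has $\ell$ zero coordinates then the nonvanishing SSYTs for $s_\bm(\bu)$ are those avoiding the $\ell$ indices where $\bu$ vanishes, and after shifting values by $\ell$ one obtains $s_\bm(\bu) = s_{\bm'}(\tilde\bu)$ in dimension $N - \ell$, where $\tilde\bu$ collects the nonzero coordinates of $\bu$ and $\bm' - \bdelta^{(N-\ell)}$ has $k - \ell$ leading zeros (explicitly $\bm'_i = i$ for $i < k - \ell$ and $\bm'_i = m_{i + \ell} - \ell$ otherwise); the same identity holds for $\bn$. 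Fix $\bu \in \cU_k$ with $\ell$ zero coordinates and a direction $\be_j$. If $\ell = 0$ then Part~(1) applies. If $\ell \geq 1$ and $u_j > 0$, the factorisation reduces the problem to dimension $N - \ell$ with parameter $k - \ell$, where Part~(1) (when $\ell = k$) or the inductive hypothesis (when $\ell < k$) supplies strict monotonicity. The main obstacle is the case $u_j = 0$: for $s > 0$ the vector $\bu + s\be_j$ has $\ell - 1$ zero coordinates and positive $j$-th entry, so the previous case applies and $g(s) := f(\bu + s\be_j)$ is strictly increasing on $(0, T]$ for small $T > 0$. Since $s_\bm(\bu) > 0$, the map $g$ is rational in $s$ with non-vanishing denominator near $s = 0$, hence real-analytic; being non-decreasing on $[0, T]$ by Part~(2) and non-constant (as just shown), it must be strictly increasing by the real-analytic observation noted at the outset, completing the induction.
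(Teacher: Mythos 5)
Your parts (2) and (3) follow essentially the paper's route: positivity of $s_\bm$ on $\cU_k$ via tableaux, the $\epsilon \to 0^+$ limit for part (2), the Littlewood reduction $s_\bm( \bu ) = s_{\bm'}( \tilde{\bu} )$ after discarding zero coordinates for part (3), and the treatment of a zero-coordinate direction by combining strict increase for positive parameters with non-decrease at $0$ (your real-analyticity device is a sound, if unnecessary, way to finish that step). However, everything rests on part (1), and your proof of part (1) has a genuine gap at exactly the points it is meant to add beyond Theorem~\ref{Tkt}. You claim that, for $\bu \in ( 0, \infty )^N$ and an index $j$, the set of $t$ for which $\bu + t \be_j$ has distinct coordinates is cofinite, and you deduce non-constancy of $t \mapsto f( \bu + t \be_j )$ from Theorem~\ref{Tkt} on that set. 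If two coordinates of $\bu$ other than the $j$th coincide --- which happens as soon as $N \geq 3$, for example $\bu = ( 1, 1, 2 )$ with $j = 3$, or $\bu = \bone_N^T$ with any $j$ --- then no value of $t$ puts $\bu + t \be_j$ in $( 0, \infty )^N_{\neq}$: the set is empty, not cofinite, Theorem~\ref{Tkt} says nothing, and your ``hence non-constant'' is unsupported. These are precisely the points where part (1) goes beyond Theorem~\ref{Tkt}, so the base case and the positive-direction case of your induction in part (3), and your part (2), inherit the gap.

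The difficulty cannot be waved away by perturbation: between $\bu$ and $\bu + t \be_j$ only the $j$th coordinate moves, so the segment never meets the distinct-coordinate locus, and the generalized Vandermonde ratio of Theorem~\ref{Tkt} degenerates (both determinants vanish) at such points. The paper's remedy is algebraic: expanding $\det \bu^{\circ \bn}$ and $\det \bu^{\circ \bm}$ along the row of the varying coordinate and dividing by $V( \bu_0 )$ gives the identity (\ref{Eschurdet}); if $x \mapsto f( \bu_0, x )$ were constant, with value $c$, on an interval, then the polynomial
\[
\sum_{j = 0}^{N - 1} ( -1 )^{N + j + 1}
\bigl( s_{\bn^{(j)}}( \bu_0 ) x^{n_j} - c\, s_{\bm^{(j)}}( \bu_0 ) x^{m_j} \bigr)
\]
would vanish identically, and since the coefficients $s_{\bn^{(j)}}( \bu_0 )$ and $s_{\bm^{(j)}}( \bu_0 )$ are strictly positive even when $\bu_0$ has repeated entries, linear independence of distinct monomials forces $\bn = \bm$, a contradiction. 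Some argument of this kind (or another proof that constancy along a coordinate line forces proportionality of $s_\bn$ and $s_\bm$ there, hence $\bm = \bn$) is needed to establish non-constancy before your monotonicity-plus-analyticity conclusion can be invoked; the Theorem~\ref{Tkt}-on-a-cofinite-set step does not supply it.
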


\begin{proof}
While this result is similar to Theorem~\ref{Tkt}, its proof is slightly different:
the first part uses Schur polynomials rather than \editAB{exponentials},
while the other parts use semi-standard Young tableaux.

\editAB{Given $\bu \in ( 0, \infty )^N_{\neq}$,}
we can expand both determinants
\editAB{along the $N$th row to see that}
\[
f( \bu ) = \frac{\det( \bu^\bn )}{\det( \bu^\bm )} = %
\frac{\sum_{j = 0}^{N - 1} %
( -1 )^{N + j + 1} \det( \bu_0^{\bn^{(j)}} ) u_N^{n_j}}%
{\sum_{j = 0}^{N - 1} %
( - 1)^{N + j + 1} \det( \bu_0^{\bm^{(j)}} ) u_N^{m_j}},
\]
\editAB{where $\bu_0 := ( u_1, \ldots, u_{N - 1} )$,
$\bm^{(j)}$ equals $\bm$ with $m_j$ removed, and
similarly for $\bn^{(j)}$. Dividing numerator and denominator
by the Vandermonde determinant $V( \bu_0 )$, we see that}
\begin{equation}\label{Eschurdet}
f( \bu ) = \frac{\sum_{j = 0}^{N - 1} %
( -1 )^{N + j + 1} s_{\bn^{(j)}}( \bu_0 ) u_N^{n_j}}%
{\sum_{j = 0}^{N - 1} %
( -1 )^{N + j + 1} s_{\bm^{(j)}}( \bu_0 ) u_N^{m_j}}.
\end{equation}
\editAB{As both sides are continuous
on $( 0, \infty )^N$, the identity (\ref{Eschurdet}) holds on the
entire open orthant.}

With~(\ref{Eschurdet}) at hand, we turn to the proof of the theorem.
\begin{enumerate}
\item By symmetry, it suffices to show $f( \bu )$ is strictly increasing
\editAB{as a function of $u_N$.} If not, by Theorem~\ref{TSchurmon}
there exists a point $\bu \in ( 0, \infty )^N$ and some $\epsilon > 0$ such that
\editAB{the function $x \mapsto f( \bu + x \be_N )$ is constant,
say with value $c$, on $[ 0, \epsilon ]$, where
$\be_N := ( 0, \ldots, 0, 1 )$. It}
follows via~(\ref{Eschurdet}) that the \editAB{function}
\[
g : x \mapsto \sum_{j = 0}^{N - 1} ( -1 )^{N + j + 1} %
( s_{\bn^{(j)}}( \bu_0 ) x^{n_j} - %
c s_{\bm^{(j)}}( \bu_0 ) x^{m_j} )
\]
is identically zero \editAB{on $[ 0, \epsilon ]$.
As} $g$ is a non-constant polynomial, since $\bm \neq \bn$,
\editAB{this} yields a contradiction.

\item Let $\bu = ( u_1, \ldots, u_N )^T \in \cU_k$
\editAB{and suppose without loss of generality that
$u_j = 0$ if~$j \leq l$ and $u_j > 0$ if $j > l$,
where $0 \leq l \leq k$. 
Given any $i \in \{ 1, \ldots, N \}$ and $t > 0$, we wish to show that
$f( \bu + t \be_i ) \geq f( \bu )$.
If $\epsilon$ is positive and sufficiently small, we have that
\[
\bu_\epsilon := %
( \epsilon, \ldots, \epsilon, u_{l + 1}, \ldots, u_N )^T = %
\epsilon \sum_{j = 1}^l \be_j + \bu \in ( 0, \infty )^N.
\]}
By Theorem~\ref{TSchurmon}, we know that
\editAB{\[
f( \bu_\epsilon + t \be_i ) - f( \bu_\epsilon ) \geq 0.
\]
We have that $s_\bm( \bu_\epsilon + t \be_i ) > 0$ and
$s_\bm( \bu_\epsilon ) > 0$, and the same holds for
$s_\bm( \bu + t \be_i )$ and $s_\bm( \bu )$, so we may
take $\epsilon \to 0^+$ to obtain the desired}
inequality.

\item \editAB{Let $\bu$, $k$, $l$, $i$ and $t$ be as for (2). We wish to
show that $f( \bu + t \be_i ) > f( \bu )$.

We first suppose $i > l$ and so we may take} $i = l+1$ by symmetry.
We now use the Littlewood \editAB{identity (\ref{Elittlewood}).
As} $u_1 = \cdots = u_l = 0$,
\editAB{the Schur polynomial}~$s_\bn( \bu )$
\editAB{is obtained} by adding monomials
corresponding to all semistandard Young tableau of shape
$\bn - \bdelta$ \editAB{that do not contain any of the
labels} $1$, \ldots, $l$. \editAB{Hence} this sum can be written as a
Schur polynomial in the reduced set of variables
$\bu' := ( u_{l + 1}, \ldots, u_N )^T$
\editAB{and the Littlewood sum involves
tableau of the shape $\bn' - \bdelta$, where
\[
\bn' := ( n_l - l, \ldots, n_{N - 1} - l ).
\]}
In other words,
\[
f( \bu ) = \frac{s_\bn( \bu )}{s_\bm( \bu )} = %
\frac{s_{\bn'}( \bu' )}{s_{\bm'}( \bu' )}
\]
\editAB{and} this last ratio is strictly increasing in each of the variables in
$\bu'$, by \editAB{part~(1). Hence $f( \bu + t \be_i ) > f( \bu )$
for any $i > l$.}

The remaining case is when $1 \leq i \leq l$, so by symmetry we
\editAB{may assume $i = l$. We proceed similarly to} the previous case,
now summing over all semistandard Young tableaux
which do not contain the labels $1$, \ldots, $l - 1$, and form the
Schur polynomials $s_{\bn''}( u_l, \bu' )$ and
$s_{\bm''}( u_l, \bu' )$, where $\bu' \in ( 0, \infty )^{N - l}$ is as in
\editAB{the previous paragraph,}
\begin{align*}
\bm'' & := ( m_{l - 1} - l + 1, \ldots, m_{N - 1} - l  + 1) \\
\textrm{and} \quad \bn'' & := ( n_{l - 1} - l + 1, \ldots, n_{N - 1} - l + 1 ).
\end{align*}
As above, we have \editAB{that}
\[
f( \bu ) = \frac{s_\bn( \bu )}{s_\bm( \bu )} =%
\frac{s_{\bm''}( u_l, \bu' )}{s_{\bn''}( u_l, \bu' )}.
\]
\editAB{Hence if the function $x \mapsto f( \bu + x \be_l )$
is not strictly monotone on $[ 0, t ]$}
then \editAB{there exist $a$, $b \in ( 0, t )$ with $a < b$ such that}
the function
\[
g : [ a, b ] \to \R; \ x \mapsto %
f( 0, \ldots, 0, x, u_{l + 1}, \ldots, u_N ) = %
\frac{s_{\bm''}( x, \bu' )}{s_{\bn''}( x, \bu' )}
\]
\editAB{is constant. However} this contradicts \editAB{part (1).}\qedhere
\end{enumerate}
\end{proof}

\editAB{The following result is used to show that (b) implies (a)
in Theorem~\ref{Tlmi2}(1). The need to ensure the vector $\bu$ has
non-negative entries means that the elementary argument used in
the proof of Theorem~\ref{Tlmi}(1) does not translate to this setting.

\begin{theorem}\label{Tdom}
Let $A \in \cP_N\bigl( [ 0, \infty ) \bigr)$, where $N \geq 1$,
and suppose the rows of $A$ are distinct.
There exists a vector $\bu \in [ 0, \infty )^N$ with distinct entries
such that $A \geq \bu \bu^T$ and~$\bu$
has a zero entry if and only if $A$ has a zero row.
\end{theorem}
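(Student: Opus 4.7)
The plan is to reduce the theorem to a direct application of Proposition~\ref{Ppos-constant}, by constructing an auxiliary non-negative vector $\bw$ with distinct entries and the correct support, and then scaling to obtain $\bu$.

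Let $\br_1^T, \ldots, \br_N^T$ denote the rows of $A$. Since these are distinct, at most one of them can be the zero vector. I would next choose a vector $\bv \in (0, \infty)^N$ avoiding the finitely many proper hyperplanes $\{ \bv : (\br_i - \br_j)^T \bv = 0 \}$ for $i \neq j$; such a $\bv$ exists because the positive orthant is open in $\R^N$ and no finite union of proper hyperplanes through the origin can cover it. Setting $\bw := A \bv$, three properties follow: (i)~$\bw \in [0, \infty)^N$, since $A$ has non-negative entries and $\bv > \bzero$; (ii)~$w_i = \br_i^T \bv$ vanishes if and only if $\br_i = \bzero$, because $\bv$ is strictly positive and $\br_i$ has non-negative entries; and (iii)~$w_i \neq w_j$ whenever $i \neq j$, by the choice of $\bv$. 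Hence $\bw$ is non-negative, has distinct entries, and its zeros occur exactly at the zero rows of $A$.

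To conclude, observe that $\bw = A \bv$ lies in $\mathrm{range}\, A$, so by self-adjointness of $A$ we have $\ker A = (\mathrm{range}\, A)^\perp \subset \bw^\perp = \ker(\bw \bw^T)$. Proposition~\ref{Ppos-constant} then yields a constant $t > 0$ with $A \geq t \bw \bw^T$, and $\bu := \sqrt{t}\, \bw$ satisfies the desired conclusion.

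The principal subtlety lies in property~(ii): the requirement that $\bw$ have precisely the correct support depends crucially on the joint non-negativity of $A$ and the strict positivity of $\bv$, and contrasts with the complex setting of Theorem~\ref{Tlmi}(1), where the absence of a positivity constraint allows $\bv$ to range freely in $\C^N$. This is presumably why the authors flag the Perron--Frobenius theory as the natural framework in the introduction, although the route sketched above bypasses its explicit invocation.
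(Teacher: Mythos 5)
Your proof is correct, and it takes a genuinely different and more elementary route than the paper's. The paper asserts that the argument of Theorem~\ref{Tlmi}(1) does not carry over because of the non-negativity constraint, and instead works with the spectral decomposition $A = \sum_j \bu_j \bu_j^T$: a Schur-complement lemma (Lemma~\ref{Ldom}) shows that convex combinations of rank-one lower bounds are again lower bounds, a $3 \times 3$ minor computation shows that for every pair of indices some eigenvector separates the corresponding coordinates, the Perron--Frobenius theorem supplies a strictly positive leading eigenvector in the irreducible case, and the general case is assembled from the Frobenius normal form, treating a possible zero row separately. Your observation is that the obstruction the paper works around can be avoided at the outset: choosing the test vector $\bv$ strictly positive makes $\bw = A\bv$ automatically non-negative with support exactly the non-zero rows of $A$, while distinctness of the entries of $A\bv$ is still generic because the finitely many hyperplanes $\{ \bx : (\br_i - \br_j)^T \bx = 0 \}$ are proper and so cannot cover the open orthant; Proposition~\ref{Ppos-constant} then applies exactly as in the complex case, since $\bw^T \bx = \bv^T A \bx = 0$ for every $\bx \in \ker A$, giving $\ker A \subset \ker( \bw \bw^T )$ and hence $A \geq t\, \bw \bw^T$ for some $t > 0$; rescaling by $\sqrt{t}$ preserves distinctness and the zero pattern. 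All steps check, including the edge cases $N = 1$ and a single zero row. What the paper's longer route buys is the auxiliary convex-combination lemma and a representation of $\bu$ as a perturbation of the Perron eigenvector, which may have independent interest; as a proof of Theorem~\ref{Tdom} itself, your argument is complete, shorter, and dispenses with Perron--Frobenius theory entirely.
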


The condition that $A$ must have distinct rows in Theorem~\ref{Tdom}
and for corresponding implication in Theorem~\ref{Tlmi}(1)
is necessary as well as sufficient, as the rank-one case shows.
If $A = \bv \bv^*$ for some $\bv \in \C^N$, and
$\bu \in \C^N$ is such that $A \geq \bu \bu^*$, then
$\bu$ is a scalar multiple of $\bv$, by Proposition~\ref{Ppos-constant}.
If $A$ has two equal rows, then two coordinates of $\bv$ are equal,
whence the same holds for $\bu$, and so the
conclusion of Theorem~\ref{Tdom} and the implication in
Theorem~\ref{Tlmi}(1) do not hold.

We note that the full-rank case of Theorem~\ref{Tdom} is immediate,
either by Proposition~\ref{Ppos-constant} or simply because
$A \geq \lambda_1 \Id_N \geq \lambda_1 \bu \bu^T$ for any unit vector $\bu$,
where $\lambda_1$ is the smallest eigenvalue of $A$ and $\Id_N$ is the
$N \times N$ identity matrix. Similarly, the rank-one case is immedate.

\begin{lemma}\label{Ldom}
Let $A$ be a real symmetric $N \times N$ matrix,
where $N \geq 1$, and suppose the vectors
$\bu_1$, \ldots, $\bu_m \in \R^N$
are such that $A \geq \bu_j \bu_j^T$  for all $j$.
If $\bu = \sum_{j = 1}^m \lambda_j \bu_j$ is an arbitrary
convex combination of $\bu_1$, \ldots, $\bu_m$, so that
$\lambda_j \in [ 0, 1 ]$ for all $j$ and
$\sum_{j = 1}^m \lambda_j = 1$, then $A \geq \bu \bu^T$.
\end{lemma}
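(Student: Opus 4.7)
The plan is to verify the inequality $A \geq \bu \bu^T$ pointwise, by fixing an arbitrary test vector $\bx \in \R^N$ and showing that $\bx^T A \bx \geq (\bx^T \bu)^2$. Since $\bu \bu^T$ is quadratic rather than linear in $\bu$, one cannot simply average the hypothesis $A \geq \bu_j \bu_j^T$ over $j$; one must control $(\bx^T \bu)^2$ in terms of the individual quantities $(\bx^T \bu_j)^2$. The convexity of $t \mapsto t^2$ on $\R$ is the key tool, supplying exactly the inequality needed.

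First, I would observe that the assumption $A \geq \bu_j \bu_j^T$ says precisely that $\bx^T A \bx \geq (\bx^T \bu_j)^2$ for every $\bx \in \R^N$ and every $j \in \{1, \ldots, m\}$. Next, applying Jensen's inequality (equivalently, the Cauchy--Schwarz inequality) with the convex function $t \mapsto t^2$ and the probability weights $\lambda_1, \ldots, \lambda_m$, one has
\[
( \bx^T \bu )^2 = \Bigl( \sum_{j = 1}^m \lambda_j \bx^T \bu_j \Bigr)^2
\leq \sum_{j = 1}^m \lambda_j ( \bx^T \bu_j )^2.
\]
Combining these two inequalities and using $\sum_j \lambda_j = 1$ yields
\[
( \bx^T \bu )^2 \leq \sum_{j = 1}^m \lambda_j \bx^T A \bx = \bx^T A \bx,
\]
so $\bx^T ( A - \bu \bu^T ) \bx \geq 0$. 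Since $\bx$ was arbitrary and $A - \bu \bu^T$ is real symmetric, this gives $A \geq \bu \bu^T$, as required.

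There is no real obstacle here; the only subtle point is recognising that the hypothesis is really about positivity of the scalar quantities $\bx^T A \bx - ( \bx^T \bu_j )^2$, after which the proof reduces to the convexity of the squaring function. The argument works verbatim with $\R$ replaced by $\C$ (using $| \bx^* \bu |^2$ in place of $( \bx^T \bu )^2$ and the triangle inequality followed by convexity of squaring on $[ 0, \infty )$), in case a complex version is needed elsewhere.
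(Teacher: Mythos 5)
Your proof is correct. You verify $A \geq \bu \bu^T$ directly on an arbitrary test vector $\bx$: the hypothesis gives $\bx^T A \bx \geq ( \bx^T \bu_j )^2$ for each $j$, and Jensen's inequality for the convex function $t \mapsto t^2$ with weights $\lambda_1, \ldots, \lambda_m$ gives $( \bx^T \bu )^2 \leq \sum_j \lambda_j ( \bx^T \bu_j )^2 \leq \bx^T A \bx$. The paper argues differently, via the Schur-complement equivalence
\[
A \geq \bv \bv^T \quad \iff \quad
\begin{pmatrix} A & \bv \\ \bv^T & 1 \end{pmatrix} \geq 0:
\]
one substitutes $\bv = \bu_j$, multiplies each block matrix by $\lambda_j$ and sums; since the block matrix is \emph{linear} in $\bv$ (and in $A$ and the constant $1$), the convex combination of positive semidefinite matrices is again positive semidefinite and equals the block matrix associated with $\bu$, whence $A \geq \bu \bu^T$. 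The two arguments are close cousins: the Schur complement is exactly the device that linearises the quadratic dependence on $\bu$, while your Jensen step handles that same quadratic dependence at the level of scalars. Yours is arguably the more elementary and self-contained route (no Schur-complement lemma needed), and your closing remark that the argument transfers verbatim to the complex Hermitian setting with $| \bx^* \bu |^2$ is a small bonus; the paper's version buys brevity by outsourcing the convexity to the linearity of the block-matrix construction.
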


\begin{proof}
We recall the following elementary Schur-complement property:
for any $\bv \in \R^N$ we have the equivalence
\[
A \geq \bv \bv^T \quad \iff \quad
\begin{pmatrix} A & \bv \\ \bv^T & 1 \end{pmatrix} \geq 0.
\]
Replacing $\bv$ by $\bu_j$ in the right-hand side,
multiplying through by $\lambda_j$
and summing over $j$ gives the result.
\end{proof}

\begin{proof}[Proof of Theorem~\ref{Tdom}]
We have the spectral decomposition
$A = \sum_{j=1}^m \bu_j \bu_j^T$, where
the eigenvectors $\bu_1$, \ldots, $\bu_m \in \R^N$ are orthogonal and
non-zero. We have that $A \geq \bu_j \bu_j^T$ for all $j$
and, by Lemma~\ref{Ldom}, it suffices to show that
some convex combination of
these vectors has non-negative and distinct entries, with a zero
appearing if and only if $A$ has a zero row.

We first note that, for any pair of distinct indices $j$ and $k$ in
$\{ 1, \ldots, N \}$, there exists some eigenvector $\bu_i$ 
whose $j$th and $k$th coordinates are distinct. If this does not
hold for some such pair then the
$\{ j, k \} \times \{ j, k \}$ principal submatrix of $A$ has the form
$\begin{pmatrix} \alpha & \alpha \\ \alpha & \alpha \end{pmatrix}$
for some $\alpha > 0$.
However, if $l \not\in \{ j, k \}$ then, up to a simultaneous re-indexing
of rows and columns, the $\{ j, k, l \} \times \{ j, k, l \}$ minor
of $A$ is such that
\[
0 \leq \det\begin{pmatrix} \alpha & \alpha & a_{j l} \\
 \alpha & \alpha & a_{k l} \\
 a_{j l} & a_{k l} & a_{l l}
\end{pmatrix} = - \alpha ( a_{j l} - a_{k l})^2 \leq 0.
\]
From this it follows that $a_{j l} = a_{k l}$ for all $l \not\in \{ j, k \}$,
which shows that the $j$th and $k$th rows of $A$ are equal.
This contradiction establishes our first observation.

We next consider the affine map
\[
\Psi : \R^{m - 1} \to \R^N; \ %
\bc = ( c_2, \ldots, c_m )  \mapsto \bu_1 + \sum_{j=2}^m c_j \bu_j.
\]
and note that $\Psi( \bc )$ has distinct coordinates if and only if
$p( \bc ) \neq 0$, where
\[
p( \bc ) := %
\prod_{1 \leq k < l \leq N} \bigl( \Psi( \bc )_l - \Psi( \bc )_k \bigr) = %
\prod_{1 \leq k < l \leq N} \Bigl( ( \bu_1)_l - ( \bu_1 )_k + %
\sum_{j = 2}^m c_j \bigl( ( \bu_j )_l - ( \bu_j )_k \bigr) \Bigr).
\]
Thus, $p$ is a polynomial in $c_2$, \ldots, $c_m$ that is a product of 
non-zero factors that are either linear or constant, by the first observation.
It follows that $\Psi( \bc )$ has distinct coordinates for all $\bc$ not in
$p^{-1}( \{ 0 \} )$, which has zero Lebesgue measure.

We now assume that $A$ is irreducible, which implies that $A$
does not have a zero row. By the Perron--Frobenius theorem,
we may take $\bu_1$ to be the
Perron eigenvector, which lies in $( 0, \infty )^N$. We can then
choose a positive but sufficiently small $\epsilon$ so that
$\Psi( \bc )$ has all coordinates positive whenever
$\bc \in ( 0, \epsilon )^{m - 1}$. Since this set has positive
Lebesgue measure, there exists some
$\bc \in ( 0, \epsilon )^{m - 1} \setminus p^{-1}( \{ 0 \} )$
and $\bu' = \Psi( \bc )$ has positive and distinct coordinates.
Finally, we let $\bu = \beta \bu'$, where
$\beta = 1 / ( 1 + c_2 + \cdots + c_m )$.

We next suppose that $A = A_1 \oplus A_2$, where
$A_1$ and $A_2$ have vectors
$\bu_1$ and $\bu_2$ with positive entries such that $A_1 \geq \bu_1 \bu_1^T$
and $A_2 \geq \bu_2 \bu_2^T$. A short calculation shows that
\begin{multline*}
\begin{array}{@{}c@{}}
\begin{bmatrix} \bx_1^T & \bx_2^T \end{bmatrix} \\
\mathstrut
\end{array}
\biggl( \begin{bmatrix} A_1 & 0 \\ 0 & A_2 \end{bmatrix} - %
\begin{bmatrix} \mu_1 \bu_1 \\ \mu_2 \bu_2 \end{bmatrix}
\begin{array}{@{}c@{}}
\begin{bmatrix} \mu_1 \bu_1^T & \mu_2 \bu_2^T \end{bmatrix} \\
\mathstrut
\end{array}
\biggr) \begin{bmatrix} \bx_1 \\ \bx_2 \end{bmatrix} \\[1ex]
 = \bx_1^T A_1 \bx_1 - 2 \mu_1^2 ( \bx_1^T \bu_1 )^2 + %
\bx_2^T A_2 \bx_2 - 2 \mu_2^2 ( \bx_2^T \bu_2 )^2
+ ( \mu_1 \bx_1^T \bu_1 - \mu_2 \bx_2^T \bu_2 )^2,
\end{multline*}
so any $\bu$ of the form $\mu_1 \bu_1 \oplus \mu_2 \bu_2$, with
$\mu_1^2 < 1 / 2$ and $\mu_2^2 < 1 / 2$, is such that $A \geq \bu \bu^T$.
To ensure that $\bu$ has distinct and positive entries, we fix suitable positive
$\mu_1$ and take~$\mu_2$ positive but
sufficiently small to ensure that every entry of $\mu_2 \bu_2$ is smaller
than every every of $\mu_1 \bu_1$. Since $A$ may be written,
up to a simultaneous re-indexing of rows and columns, 
in Frobenius normal form as a block-diagonal sum of irreducible matrices
and at most one zero, the result follows.
If $A$ has a zero row then zero appears in the appropriate
coordinate of $\bu$, and otherwise all the entries of $\bu$ are positive.
\end{proof}}

We \editAB{now use} Theorem~\ref{Tkt} to show \editAB{that}
strict positive definiteness \editAB{holds generically for}
polynomial positivity preservers.

\begin{proof}[Proof of Theorem~\ref{Tlmi2}]
The proof of part~(1) is similar to that of the corresponding
\editAB{parts of the proof}
of Theorem~\ref{Tlmi}, with \editAB{a few minor} modifications.
\editAB{To see that (a) implies (c) here, we note first that if
$A$ has rank one then we may assume $A = \bu \bu^*$.
As before, the matrix $h[ \bu \bu^* ]$ is the sum of $N$ rank-one
matrices and their column spaces are spanned by
$\{ \bu^{\circ n_0}, \ldots, \bu^{\circ n_{N - 1}} \}$.
This set is linearly independent, as the
generalized Vandermonde determinant of
$\bu^{\circ \bn} = ( u_i^{n_{j - 1}} )_{i, j = 1}^N$
is non-zero if $u_1$, \ldots, $u_N > 0$ and
$n_0 < n_1 < \cdots < n_{N - 1}$
\cite[Example~XIII.8.1]{Gant2}. In the case where $u_i = 0$ for some $i$
then the $i$th row of the matrix~$\bu^{\circ \bn}$ equals $( 1, 0, \ldots, 0 )$
and expanding the determinant along this row reduces the matter
to the former situation.
When $A = \bu \bu^*$ we are now done; otherwise
we are in the setting of Definition~\ref{Dsetup}(3)
and we emply Loewner monotonicity as in the proof of Theorem~\ref{Tlmi}.

The arguments to show that (c) implies (b) and that (c) and (d) are equivalent
are unchanged and the fact that (b) implies (a) follows immediately from
Theorem~\ref{Tdom}.}

For part~(2), we first suppose as in the proof of Theorem~\ref{Tlmi}(2)
that $A = \bu \bu^*$ has rank one, and $\det g[\bu \bu^*] = 0$. By suitably
specializing~(\ref{Ejacobi-trudi2}), \editAB{we see that}
\[
\sum_{j = 0}^{N - 1} \frac{\det (\bu^{\circ \bn_j})^2}{c_j} = %
\cC \det (\bu^{\circ \bn})^2 = \det (\bu^{\circ \bn})^2
\sum_{j = 0}^{N - 1} %
\frac{V(\bn_j)^2}{V(\bn)^2} \frac{\rho^{M - n_j}}{c_j},
\]
where $\bn_j$ and $\bn$ are as in~(\ref{Erankone}). Moreover, by the
hypotheses we have $\det (\bu^{\circ \bn}) \neq 0$. Thus,
\begin{equation}\label{Eratio}
\sum_{j = 0}^{N - 1} %
\frac{\det (\bu^{\circ \bn_j})^2}{c_j \det (\bu^{\circ \bn})^2} = %
\sum_{j = 0}^{N - 1} %
\frac{\rho^{| \bn_j - \bn |} V( \bn_j )^2}{c_j V( \bn )^2}.
\end{equation}
By Theorem~\ref{Tkt}, each summand on the left is strictly less than
the corresponding one on the right \editAB{whenever}
$\bu \in [ 0, \rho ]^N_{\neq}$ \editAB{and so $g[ \bu \bu^* ]$
is positive definite.
The remaining case occurs when $\bu$ has a zero entry, in which case
$n_0 = 0$ and
$\bn_0 := ( n_1 < \cdots < n_{N - 1} < M )$ lies in $( 0, \infty )^N_<$.
Then $\bu^{\circ \bn_0}$ has a zero row and therefore zero determinant,
whereas if $\bm$ is such that $m_0 = 0$ then
$\det( \bu^{\circ \bm} )^2 = \det( \bu_\times^{\circ \bm'} )^2$,
where $\bu_\times$ is $\bu$ with the zero entry removed,
so that $\bu_\times \in ( 0, \sqrt{\rho} ]^{N - 1}_{\neq}$,
and $\bm' := ( m_1 < \cdots < m_{N - 1} )$.
Hence
\[
\frac{\det( \bu^{\circ \bn_j} )^2}{\det( \bu^{\circ \bn} )^2} = %
\frac{\det( \bu_\times^{\circ \bn_j'} )^2}%
{\det( \bu_\times^{\circ \bn'} )^2} \leq %
\rho^{| \bn_j' - \bn' |} \frac{V( \bn_j' )^2}{V( \bn' )^2} \leq %
\rho^{| \bn_j - \bn |} \frac{V( \bn_j )^2}{V( \bn)^2}
\]
for $j = 1$, \ldots, $N - 1$. (The final inequality holds because
if $m_k \leq n_k$ for $k = 0$, \ldots, $N - 1$ and $m_0 = n_0$
then $V( \bm ) / V( \bm' ) \leq V( \bn ) / V( \bn' )$.)
Thus the equality (\ref{Eratio}) fails to hold
once again and we see that} $g[\bu \bu^*]$ is positive definite.

\editAB{The proof for general $A$ is identical to that part of}
the proof of Theorem~\ref{Tlmi}(2),
\editAB{and the} same holds for the proof of the final part.
\end{proof}

\editAB{We conclude this section with the following observation.}

\begin{remark}
\editAB{As noted in the introduction, and explained by Loewner}
(see Horn's thesis \cite{horn}), \editAB{a necessary
condition for any smooth function $f : ( 0, \rho ) \to \R$
to preserve positive semidefiniteness}
when applied entrywise
to matrices in $\cP_n\bigl( ( 0, \rho ) \bigr)$
\editAB{is that $f$, $f'$, \ldots, $f^{( n - 1 )}$ must}
be non-negative on $( 0, \rho )$.

Now a natural question is \editAB{as follows}: is Loewner's necessary condition also
sufficient? \editAB{For} power functions \editAB{of the form}
$p(x) \equiv x^\alpha$ then this condition is indeed sufficient, as shown by
FitzGerald and Horn~\cite{FitzHorn}. However, \editAB{this}
necessary condition is not sufficient \editAB{in general}.

\editAB{From Theorem~\ref{Tthreshold} with $c_0$, $c_1 > 0$,
$\bn = ( 0, 1 )$ and $M = 2$,
we see that the quadratic polynomial $p( x ) = c_0 + c_1 x + c' x^2$}
preserves \editAB{positive semidefiniteness} on $\cP_2\bigl( ( 0, 1 ) \bigr)$
if and only if
\begin{equation}\label{Esharp}
c' \geq \frac{-c_0 c_1}{4 c_0 + 2 c_1}.
\end{equation}
\editAB{On} the other hand, Loewner's \editAB{result provides a
lower bound for} the \editAB{coefficient $c'$ which} can be computed as
follows. \editAB{As $p'$ is non-negative on $[ 0, 1 ]$, we have that}
$2 c' x + c_1 \geq 0$ for any $x \in [ 0, 1 ]$, so $c' \geq -c_1 / 2$.
\editAB{If $x \in [ 0, 1]$ then this implies that}
\[
p( x ) = c' x^2 + c_1 x + c_0 \geq \frac{-c_1}{2} x^2 + c_1 x + c_0 = %
c_1 x \Bigl( 1 - \frac{x}{2} \Bigr) + c_0 \geq c_0 > 0.
\]
\editAB{(Alternatively, one may observe that $p$ is non-decreasing
on $[ 0, 1 ]$, since $p'( x ) > 0$ for any choice of $x \in ( 0, 1 )$, 
nd so $f$ is bounded below by $p( 0 ) = c_0$.)}
Thus, \editAB{the} lower bound \editAB{on~$c'$ to ensure that
Loewner's condition holds} is $-c_1/2$, which
\editAB{is} strictly smaller than the bound in~\eqref{Esharp}. Hence
Loewner's necessary condition is not sufficient, even for polynomial functions.
We thank Siddhartha Sahi for raising this question.
\end{remark}
%}}}

%{{{1 Section 3 - Rank properties and continuity of the Rayleigh quotient
%on strata
\section{Rank properties on strata}\label{Srank}

Theorem~\ref{Tlmi} \editAB{provides readily verified
criteria to classify when a
matrix}~$A \in \cP_N\bigl( \overline{D}( 0, \rho ) \bigr)$
\editAB{is such that} $f[ A ]$ is non-singular, and also
\editAB{implies that
there are at most two choices of $A$ for which $f[ A ]$ is} zero.
This section significantly refines both of these results, by
\editAB{provding a method to compute the} rank of the
matrix~$f[ A ]$.
\editAB{A tool developed in previous work
\cite{BGKP-pmp,BGKP-strata},}
a Schubert cell-type stratification of the cone $\cP_N( \C )$,
turns out to be \editAB{crucial}:
the rank of $A$ depends solely on which stratum $A$ lies in.
We begin by recalling the relevant notions.

\begin{definition}
Given an integer $N \geq 2$, denote by $( \Pi_N, \preccurlyeq )$ the
poset of all partitions of the set $\{ 1, \ldots, N \}$, ordered
such that $\pi' \preccurlyeq \pi$ if and only if $\pi$ is a refinement of
$\pi'$: \editAB{every set in $\pi$ is a subset of some set in $\pi'$.

We let $| \pi |$ denote the number of sets in $\pi$ and $| I |$ denote
the number of elements in a set $I \in \pi$.
\editAB{We insist that $N \geq 2$ throughout this section
to avoid uninteresting trivialities.}

Given non-empty sets $I$, $J \subseteq \{ 1, \ldots, N \}$ and an
$N \times N$ complex matrix $A$, we let  $A_{I \times J}$ denote the
$| I | \times | J |$ submatrix of $A$ with row indices in $I$
and column indices in $J$.}
\end{definition}

\begin{proposition}[{\cite[Propositions~2.4 and~2.6]{BGKP-strata}}]\label{Pexists}
Fix an integer $N \geq 2$ and a multiplicative
subgroup~$G \leq \C^\times$.
\begin{enumerate}
\item \editAB{For any $N \times N$ complex matrix} $A$,
there exists a unique
\editAB{minimal partition $\pi \in \Pi_N$ such
that the entries of the submatrix $A_{I \times J}$ lie in a
single $G$-orbit for all $I$, $J \in \pi$.}

In particular, there exists \editAB{an $| \pi | \times | \pi |$ complex matrix}
$C$ such that $A$ is a block matrix with
$A_{I \times J} = c_{I J} \bone_{| I | \times | J |}$ for all
$I$, $J \in \pi$. Moreover, $A$ and $C$ have equal rank.

\item \editAB{There is a} stratification of the set of $N \times N$
complex matrices,
\[
\C^{N \times N} = \bigsqcup_{\pi \in \Pi_N} \stratumsymb^G_\pi,
\]
\editAB{where} the \emph{stratum}
\[
\stratumsymb^G_\pi := \{ A \in \C^{N \times N} : \pi^G( A ) = \pi \}
\]
\editAB{and $\pi^G( A )$ is the partition from (1). The}
set $\stratumsymb^G_\pi$ has closure
\begin{equation}\label{Eschubert}
\overline{\stratumsymb^G_\pi} = %
\bigsqcup_{\pi' \preccurlyeq \pi} \stratumsymb^G_{\pi'}
\end{equation}
when $\C^{N \times N}$ is equipped with its usual topology.
\end{enumerate}
\end{proposition}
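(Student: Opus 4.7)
The plan is to establish part~(1) by constructing $\pi^G(A)$ from an explicit equivalence relation on the index set, and then to derive part~(2) from uniqueness together with a continuity argument and a generic perturbation construction.

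For part~(1), I would define an equivalence relation $\sim$ on $\{1, \ldots, N\}$ by declaring $i \sim j$ if and only if, for every column index $k$, the entries $a_{ik}$ and $a_{jk}$ lie in a common $G$-orbit, and symmetrically for columns: for every row index $k$, the entries $a_{ki}$ and $a_{kj}$ share a $G$-orbit. That $\sim$ is an equivalence relation follows directly from $G$ being a multiplicative subgroup of $\C^\times$: reflexivity from $1 \in G$, symmetry from closure under inverses, and transitivity from closure under products. Let $\pi$ be the resulting partition. By construction, for any $I, J \in \pi$ with $i, i' \in I$ and $j, j' \in J$, chaining $a_{ij} \sim_G a_{i'j}$ (row equivalence) with $a_{i'j} \sim_G a_{i'j'}$ (column equivalence) yields $a_{ij} \sim_G a_{i'j'}$, so $A_{I \times J}$ has entries in a single $G$-orbit. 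For minimality under $\preccurlyeq$, suppose $\pi'$ is any partition with the block-orbit property; if $i$ and $j$ share a block of $\pi'$, then for each $J' \in \pi'$ the entries of $A_{\{i,j\} \times J'}$ lie in a single orbit, forcing $a_{ik}$ and $a_{jk}$ to share an orbit for every $k$, and the symmetric argument on columns forces $i \sim j$, so $i$ and $j$ share a block of $\pi$. Thus every block of $\pi'$ is contained in a block of $\pi$, giving $\pi \preccurlyeq \pi'$ and confirming $\pi$ is the coarsest partition with the property. The existence of $C$ with $A_{I \times J} = c_{IJ} \bone_{|I| \times |J|}$ and the equality $\rk A = \rk C$ then follow, because within each $\pi$-block the rows (and columns) of $A$ are linearly dependent modulo the $G$-action, so passing to one representative per block contracts $A$ into the $|\pi| \times |\pi|$ matrix $C$ without altering the rank.

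For part~(2), disjointness of the strata is immediate from uniqueness in part~(1). The inclusion $\overline{\stratumsymb^G_\pi} \subseteq \bigsqcup_{\pi' \preccurlyeq \pi} \stratumsymb^G_{\pi'}$ follows from entrywise continuity: if $A_n \to A$ with each $A_n \in \stratumsymb^G_\pi$, then $\pi$ satisfies the block-orbit condition for every $A_n$, and the closedness of this condition in the entries transfers it to the limit $A$; hence by minimality $\pi^G(A) \preccurlyeq \pi$. For the reverse inclusion, given $A \in \stratumsymb^G_{\pi'}$ with $\pi' \preccurlyeq \pi$, each block of $\pi'$ is a disjoint union of blocks of $\pi$, and I would construct $A_t \to A$ as $t \to 0^+$ by perturbing the reduced data of $A$ so that the $\pi$-subblocks of each $\pi'$-block become pairwise $G$-inequivalent. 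The main obstacle is ensuring that $A_t$ lands in $\stratumsymb^G_\pi$ and not in some intermediate stratum $\stratumsymb^G_{\pi''}$ with $\pi' \preccurlyeq \pi'' \prec \pi$; this reduces to the genericity claim that the subset of $\C^{|\pi| \times |\pi|}$ whose induced partition is strictly coarser than the discrete partition is a proper closed (indeed analytic) subset, hence generic small perturbations of the reduced data avoid it.
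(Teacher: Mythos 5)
A preliminary remark: the paper does not prove Proposition~\ref{Pexists} at all --- it is imported verbatim from \cite{BGKP-strata} --- so there is no internal proof to compare against and I assess your argument on its own terms. Your treatment of part~(1) is correct and surely the intended one: entrywise $G$-orbit comparison of the rows and columns indexed by $i$ and $j$ defines an equivalence relation, the chaining argument shows that each block $A_{I \times J}$ of the resulting partition $\pi$ meets a single orbit, and your converse argument shows that any partition $\pi'$ with the block-orbit property refines $\pi$, so $\pi \preccurlyeq \pi'$, giving minimality and uniqueness. The ``in particular'' clause, however, is only justified for the trivial group: for general $G$ a block whose entries share an orbit need not be constant (take $G = \{ \pm 1 \}$ and the block $\begin{pmatrix} 1 & -1 \\ -1 & 1 \end{pmatrix}$), and ``linearly dependent modulo the $G$-action'' is not a rank argument. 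For $G = \{ 1 \}$, the only case used later in the paper, the honest argument is that rows (and columns) within one block of $\pi$ are literally equal, so deleting all but one representative row and column per block leaves the rank unchanged and produces exactly $C$.

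In part~(2) there are two genuine gaps, located precisely where the generality in $G$ matters. The inclusion $\overline{\stratumsymb^G_\pi} \subseteq \bigsqcup_{\pi' \preccurlyeq \pi} \stratumsymb^G_{\pi'}$ rests on your assertion that the block-orbit condition is closed under entrywise limits; this is true for $G = \{ 1 \}$, where it is equality of entries, but it is unproved and false for arbitrary subgroups, since the relation $\{ ( x, y ) \in \C^2 : y \in G x \}$ need not be closed. For instance, with $G = \C^\times$ the matrices $A_n = \begin{pmatrix} 1 & 1/n \\ 1/n & 1 \end{pmatrix}$ all lie in $\stratumsymb^G_{\pi_\wedge}$ for the one-block partition $\pi_\wedge$, yet $A_n \to \Id_2$, whose minimal partition is the discrete one, which is not $\preccurlyeq \pi_\wedge$; so the closedness step (and the closure formula, read literally for every subgroup) needs a hypothesis on $G$ that you never isolate, namely closedness of the orbit-equivalence relation on $\C \times \C$, which fails even for $G = \C^\times$ because orbits degenerate at $0$. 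The reverse inclusion has the same defect: your genericity claim --- that the locus of $| \pi | \times | \pi |$ matrices inducing a partition coarser than the discrete one is a proper closed subset --- is correct for $G = \{ 1 \}$, where it is a finite union of proper linear subspaces (equality of a pair of rows together with the corresponding pair of columns), but false for $G = \C^\times$, where every matrix without zero entries induces the one-block partition, so that locus is dense. Either record the required hypothesis on $G$ or restrict to $G = \{ 1 \}$, and in that setting also make the perturbation explicit, for example $A_t := A + t B$ with $B$ constant on the blocks of $\pi$ and with generic block values, which keeps the $\pi$-blocks constant and, for small generic $t$, avoids the finitely many proper subspaces just described.
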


Using the above isogenic block stratification, we now refine the results
in the preceding section. We \editAB{let}
$\pi_\vee := \{ \{ 1 \}, \ldots, \{ N \} \}$ \editAB{denote}
the maximum element of the lattice of partitions $\Pi_N$
and we work henceforth with $\pi^G( A )$ only for
the trivial subgroup $G = \{ 1 \}$.
\editAB{To lighten notation, we write
$\stratumsymb_\pi^{\{1\}} = \stratumsymb_\pi$
and $\pi^{\{1\}}( A ) = \pi( A )$.}

\begin{theorem}\label{Tstrata-rank}
\editAB{Let $f$ be as in Definition~\ref{Dsetup}(4), so
that $n_0$ and $M$ are non-negative integers and $n_j = n_0 + j$
for $j = 0$, \ldots, $N -1$, where $N \geq 2$.
Suppose $c_0$, \ldots, $c_{N - 1} > 0$
and $c' > -\cC^{-1}$,
where $\cC$ is as in (\ref{DC}).}
\editAB{Let} $A \in \cP_N\bigl( \overline{D}( 0, \rho ) \bigr)$,
with $n_0 = 0$ if $A$ has a zero row. Then
\begin{equation}
\rk f[ A ] = \rk h[ A ] = | \pi( A ) |,
\end{equation}
while $\rk g[ A ] = | \pi( A ) |$ if
\begin{enumerate}
\item[(a)] $A \not \in \stratumsymb_{\pi_\vee}$ or
\item[(b)] $A \in \stratumsymb_{\pi_\vee}$ and $A$ has a row
with distinct entries, with $n_0 = 0$ if any entry in this row is zero.
\end{enumerate}
\end{theorem}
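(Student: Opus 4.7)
The plan is to exploit the block structure of matrices in the stratum $\stratumsymb_{\pi(A)}$ provided by Proposition~\ref{Pexists}(1), reducing the problem to an $n \times n$ matrix to which Theorem~\ref{Tlmi} may be applied directly, where $n := |\pi(A)|$.

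Write $\pi := \pi(A)$. By Proposition~\ref{Pexists}(1), the matrix $A$ has the block form $A_{I \times J} = c_{IJ} \bone_{|I| \times |J|}$ for all $I, J \in \pi$, and the $n \times n$ matrix $C := (c_{IJ})_{I, J \in \pi}$ coincides with any principal submatrix of $A$ obtained by selecting a single representative from each block of $\pi$. Thus $C \in \cP_n\bigl( \overline{D}( 0, \rho ) \bigr)$, the rows of $C$ are distinct by minimality of $\pi$, and $C$ has a zero row if and only if $A$ does. Letting $E$ be the $N \times n$ incidence matrix with $E_{i, I} = 1$ when $i \in I$ and zero otherwise, we have $A = E C E^T$ and, for any polynomial $\phi$, $\phi[A] = E \phi[C] E^T$. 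Since $E$ has full column rank $n$, we obtain $\rk \phi[A] = \rk \phi[C]$, so it suffices to prove that $f[C]$, $h[C]$ and (under the assumptions of~(a) or~(b)) $g[C]$ are positive definite. A straightforward embedding, by padding matrices in $\cP_n\bigl( \overline{D}( 0, \rho ) \bigr)$ with zeros into dimension $N$, also shows that $\cC_n \leq \cC$, where $\cC_n$ denotes the sharp constant associated to the polynomial with powers $n_0$, \ldots, $n_{n - 1}$, $M$ at dimension $n$.

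Applying Theorem~\ref{Tlmi}(1)(c) at dimension $n$ to $C$ yields that $h_n[C]$ is positive definite, where $h_n(z) := \sum_{j = 0}^{n - 1} c_j z^{n_j}$; the remaining terms $\sum_{j = n}^{N - 1} c_j C^{\circ n_j}$ are PSD by the Schur product theorem, so $h[C] \geq h_n[C]$ is positive definite. For $f[C]$, the chain of inequalities $c' > -\cC^{-1} \geq -\cC_n^{-1}$ lets us apply Theorem~\ref{Tlmi}(1)(d) at dimension $n$ to conclude that $f_n[C] := h_n[C] + c' C^{\circ M}$ is positive definite, and hence so is $f[C] = f_n[C] + \sum_{j = n}^{N - 1} c_j C^{\circ n_j}$. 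For $g[C]$ in case~(b), we have $n = N$ and $C = A$, so Theorem~\ref{Tlmi}(2) applies directly using the hypothesis that $A$ has a row with distinct entries.

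In the remaining case~(a) we have $n < N$. Corollary~\ref{Clmi} at dimension $n$ gives $C^{\circ M} \leq \cC_n h_n[C] \leq \cC_n h[C]$ and therefore
\[
g[C] = h[C] - \cC^{-1} C^{\circ M} \geq \bigl( 1 - \cC_n / \cC \bigr) h[C].
\]
Since $h[C]$ is positive definite, $g[C]$ is positive definite provided $\cC_n < \cC$ strictly whenever $n < N$. Establishing this strict inequality is the main technical obstacle of the proof: it follows from the formula~(\ref{DC}) together with the observation that the additional positive contributions indexed by $j = n$, \ldots, $N - 1$ dominate any decrease arising from the Vandermonde ratios of the $j = 0$, \ldots, $n - 1$ terms; equivalently, the rank-one extremal matrix realising the sharp threshold $\cC$ at dimension $N$ has no analogue in any strictly smaller dimension.
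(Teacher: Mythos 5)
Your reduction to the compressed matrix $C$ is fine and is essentially equivalent to the paper's use of the compression--inflation maps $\down$ and $\up$ (Proposition~\ref{Pexists}(1) already records that $A$ and $C$ have equal rank, and entrywise polynomials commute with the block structure), and your treatment of case~(b) matches the paper. The gap is the inequality $\cC_n \leq \cC$ (and its strict form needed in case~(a)), which you assert via a ``straightforward padding'' and a dominance heuristic: with your definition of $\cC_n$ --- same exponents $n_0, \ldots, n_{n-1}$, the \emph{same} top power $M$, and the bottom coefficients $c_0, \ldots, c_{n-1}$ --- the inequality is simply false in general. Take $N = 3$, $n = 2$, $n_j = j$, $M = 3$, $\rho = 1$: then $\cC = 1/c_0 + 9/c_1 + 9/c_2$, while your $\cC_2 = 4/c_0 + 9/c_1$, so $\cC_2 > \cC$ whenever $9/c_2 < 3/c_0$ (for instance $c_0 = c_1 = 1$, $c_2 = 100$). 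The ``extra positive terms'' of $\cC$ carry weights $1/c_j$ and can be made arbitrarily small, so they cannot dominate the growth of the Vandermonde ratios caused by dropping coordinates while keeping $M$ fixed; likewise the padding argument only says something about the full $N$-term polynomial $h$ applied to the padded matrix, not about the truncation $h_n$, so it cannot compare the two sharp constants. Note also that your positive-definiteness claim for $f[C]$ (via $c' > -\cC^{-1} \geq -\cC_n^{-1}$) rests on the same false inequality.

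The paper's proof gets around exactly this obstruction by changing both the coefficients and the exponent before comparing constants: writing $B = \down(A)$, it factors $h[B] \geq B^{\circ(n_0 + N - m)} \circ h_m[B]$ by the Schur product theorem, where $h_m$ uses the \emph{top} coefficients $c_{N-m}, \ldots, c_{N-1}$, and bounds $h_m[B] \geq \cC_m^{-1} B^{\circ(M - n_0 - N + m)}$ with $\cC_m$ the sharp constant at dimension $m$ for the \emph{reduced} exponent $M - n_0 - N + m$. With this choice the comparison $\cC_m < \cC_{m+1} \leq \cC_N = \cC$ does hold, by the explicit ratio
\[
{M - N + m + 1 \choose j + 1}{M - N + m \choose j}^{-1} = \frac{M - N + m + 1}{j + 1} > 1,
\]
and then Lemma~\ref{Llmi} with $C = h[B]$, $D = \cC_m^{-1} B^{\circ M}$ and $t = \cC_m/\cC_N \in (0,1)$ shows that $g[B] = h[B] - \cC_N^{-1} B^{\circ M}$ has the same rank as the positive definite matrix $h[B]$; positive definiteness of $f[B]$ then follows from $f[B] \geq g[B]$ rather than from a dimension-$n$ threshold. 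To repair your argument you would need to replace your claimed comparison by this (or an equivalent) genuinely proved inequality; as written, the central step of your case~(a) and of your treatment of $f$ is unsupported and, in the form stated, false.
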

In particular, for any partition $\pi \in \Pi_N$ and any positive
semidefinite matrix $A \in \stratumsymb_\pi$, both $f[ A ]$
and $h[ A ]$ have rank equal to the number of blocks in $\pi$
\editAB{(as long as $n_0 = 0$ whenever $A$ has a zero row).}

When $N > 2$, the identity matrix is an element of
$\stratumsymb_{\pi_\vee}$ which has no row with distinct
entries. It follows that \editAB{Theorem~\ref{Tstrata-rank}(b)}
is a sufficient but not necessary
condition for \editAB{the rank of} $g[ A ]$ to equal~$| \pi( A )|$.

\begin{remark}\label{Rmore-equiv}
Theorem~\ref{Tstrata-rank} is intertwined with Theorem~\ref{Tlmi} in
two ways. First, \editAB{the matrices}
$f[ A ]$ and $h[ A ]$ have rank equal to
$|\pi( A )|$, so are never zero.
Second, the four equivalent
assertions in Theorem~\ref{Tlmi}(1) are also equivalent to
\editAB{the following}:
\begin{enumerate}
\item[(e)] \editAB{The matrix $A$ lies in} $\stratumsymb_{\pi_\vee}$, the top cell of the stratification.
\end{enumerate}
\editAB{Since $\stratumsymb_{\pi_\vee}$ is dense in $\cP_N$, we see
again that $f[ A ]$ is positive definite for generic $A$.}
\end{remark}

The proof of Theorem~\ref{Tstrata-rank} employs the block decomposition
of Proposition~\ref{Pexists}, as well as the inflation and compression
operators for the entrywise calculus studied elsewhere
\cite[Section~4]{BGKP-strata},
\editAB{\cite{BGKP-inertia}}. We begin by recalling these operators and
some basic properties.

\begin{definition}[{\cite[Definition~4.1]{BGKP-strata}}]\label{Dcomp}
\editAB{Suppose $\pi = \{ I_1, \ldots, I_m \} \in \Pi_N$ for some
$N \geq 1$.
Given $i$, $j \in \{ 1, \ldots, m \}$, we} let $E_{i j}$ denote the
elementary \editAB{$m \times m$}
matrix with $( i, j )$ entry equal to~$1$ and all other
entries~$0$, and let \editAB{$\bone[ I_i \times I_j ]$ denote} the
$N \times N$ matrix with $1$ in each entry of the $I_1 \times I_j$
block and $0$ elsewhere.

\begin{enumerate}
\item Define the linear \emph{inflation} map
\[
\up : \C^{m \times m} \to \C^{N \times N}; \ %
E_{i j} \mapsto \bone[ I_i \times I_j ] \quad ( i, j = 1, \ldots, m )
\]
and note that the range of $\up$ is $\overline{\stratum{\pi}}$.

\item Define the linear \emph{compression} map
\[
\down : \C^{N \times N} \to \C^{m \times m}; \ %
\down( A )_{i j} := 
\frac{1}{| I_i | \, | I_j |} \sum_{p \in I_i, q \in I_j} a_{p q} %
\quad \editAB{( i, j = 1, \ldots, m ),}
\]
so that the image \editAB{$\down( A ) = ( b_{i j} )_{i, j = 1}^m$}
is such that $b_{i j}$ is the
average of the entries in~$A_{I_i \times I_j}$.
\end{enumerate}
\end{definition}

The operators $\up$ and $\down$ are well behaved with respect to the entrywise
calculus:

\begin{theorem}[{\cite[Theorem~4.2]{BGKP-strata}}]\label{Tentrywise}
Let $\overline{\stratum{\pi}}$ and $\C^{m \times m}$
\editAB{each} be equipped with
the entrywise product, so that the units for this product are
$\one{N}$ and $\one{m}$, respectively.
The maps
\[
\down : \overline{\stratum{\pi}} \to \C^{m \times m} %
\qquad  \text{and} \qquad %
\up : \C^{m \times m} \to \overline{\stratum{\pi}}
\]
are mutually inverse, rank-preserving isomorphisms of unital commutative
$*$-algebras. Moreover, $A \in \overline{\stratum{\pi}}$ is positive
semidefinite if and only if~$\down( A )$ is.
\end{theorem}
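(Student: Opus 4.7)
The plan is to decompose $A$ via the isogenic block stratification and reduce the rank computations to a compressed $k \times k$ problem, where $k = | \pi( A ) |$. By Proposition~\ref{Pexists} we write $A = \up( C )$ for a unique $C = \down( A ) \in \cP_k\bigl( \overline{D}( 0, \rho ) \bigr)$; the minimality of $\pi$ forces the rows of $C$ to be pairwise distinct. Since entrywise polynomial evaluation preserves the block-constant structure of $\overline{\stratumsymb_\pi}$, we have $f[ A ] = \up( f[ C ] )$, $h[ A ] = \up( h[ C ] )$, and $g[ A ] = \up( g[ C ] )$. By Theorem~\ref{Tentrywise} the map $\up$ is rank-preserving, so it suffices to show that $f[ C ]$, $h[ C ]$, and $g[ C ]$ are positive definite $k \times k$ matrices in the relevant cases.

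I would first establish positive definiteness of $h[ C ]$ by imitating the implication (a)$\Rightarrow$(c) in Theorem~\ref{Tlmi}(1): Theorem~\ref{Tdom} (in the non-negative case) or an elementary linear-algebra argument (in the complex case) produces $\bu \in \C^k$ with distinct entries such that $C \geq \bu \bu^*$, with $u_i = 0$ iff the $i$th row of $C$ is zero (so $n_0 = 0$ by hypothesis). Then $h[ C ] \geq h[ \bu \bu^* ] = \sum_j c_j \bu^{\circ n_j} ( \bu^{\circ n_j} )^*$, which has rank $k$ because any $k$ of the $N \geq k$ generalized-Vandermonde vectors $\bu^{\circ n_j}$ are linearly independent. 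The positive definiteness of $f[ C ]$ then follows: if $c' \geq 0$, trivially $f[ C ] \geq h[ C ]$; while if $-\cC^{-1} < c' < 0$, compressing Corollary~\ref{Clmi} gives $C^{\circ M} \leq \cC h[ C ]$, and Lemma~\ref{Llmi} applied with $t = | c' | \cC \in [ 0, 1 )$ to the pair $( h[ C ], \cC^{-1} C^{\circ M} )$ shows that $f[ C ] = h[ C ] - t \cC^{-1} C^{\circ M}$ has the same rank as $h[ C ]$. Case (b) of the statement on $g[ A ]$ is then immediate from Theorem~\ref{Tlmi}(2), since $A \in \stratumsymb_{\pi_\vee}$ means no compression is needed and $A$ has a row of distinct entries by hypothesis.

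The main obstacle is case (a): showing $g[ C ]$ is positive definite when $k < N$. Here $C$ has distinct rows but no row of $C$ need have distinct entries (indeed, no row of $A$ does whenever $| \pi | < N$), so the size-$k$ analogue of Theorem~\ref{Tlmi}(2) cannot be applied directly to $C$. The strategy is to exploit the over-parametrization of $g$, which contains $N > k$ positive monomials providing slack in the sharp inequality $\cC h \geq z^M$ at size $k$. For the rank-one case $C = \bu \bu^*$ with $\bu$ of distinct entries (again via Theorem~\ref{Tdom}; the entries of $\bu$ lie in $[ 0, \sqrt{\rho} ]$ automatically since $\bu \bu^*\leq C$ has entries in the prescribed disc), I would expand $\det g[ \bu \bu^* ]$ via the size-$k$ form of Theorem~\ref{Tjacobi-trudi} as a signed sum over $k$-subsets of $\{ n_0, \ldots, n_{N - 1}, M \}$, then control each negative contribution (from those subsets containing $M$) by positive contributions using the strict Schur monotonicity of Theorem~\ref{Tkt}. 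The key combinatorial point is that each $( k - 1 )$-subset $\bn''$ of $\bn$ admits $N - k + 1 \geq 2$ distinct replacements of $M$ by some $n_j \notin \bn''$, and combining these bounds with the Schur strictness yields the strict inequality $\det g[ \bu \bu^* ] > 0$. The general rank case then follows from the rank-one case via the Loewner-interpolation identity \cite[(3.16)]{BGKP-fixeddim}, exactly as in the proof of Theorem~\ref{Tlmi}(2), supplemented by Lemma~\ref{Lpd} when $n_0 > 0$ to absorb the factor $\bu^{\circ n_0}$.
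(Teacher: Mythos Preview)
Your proposal does not address the stated Theorem~\ref{Tentrywise} at all; that result is simply quoted from \cite{BGKP-strata} and has no proof in this paper. What you have actually sketched is a proof of Theorem~\ref{Tstrata-rank}, so I will assess it as such. Your treatment of $h[C]$ and $f[C]$, and of case~(b) for $g$, coincides with the paper's and is fine.

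The real divergence is case~(a), showing $g[C]$ positive definite when $k = |\pi(A)| < N$. The paper's argument is much shorter and uses none of the machinery you propose: it drops the bottom $N-k$ monomials of $h$, applies Corollary~\ref{Clmi} at size~$k$ to the truncated polynomial $h_m(z) = \sum_{j=0}^{m-1} c_{N-m+j} z^{n_0+N-m+j}$ to obtain $h[B] \geq \cC_m^{-1} B^{\circ M}$ with an explicit $\cC_m$, checks by a one-line binomial estimate that $\cC_m < \cC$, and concludes via Lemma~\ref{Llmi} with $t = \cC_m/\cC \in (0,1)$ that $g[B] = h[B] - t\,\cC_m^{-1} B^{\circ M}$ has the same rank as $h[B]$. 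No rank-one reduction, no Jacobi--Trudi expansion, no Schur monotonicity is invoked.

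Your route through the size-$k$ Jacobi--Trudi expansion has a genuine gap. The expansion of $\det g[\bu\bu^*]$ has $\binom{N}{k}$ positive terms (indexed by $k$-subsets $\bn' \subset \bn$) and $\binom{N}{k-1}$ negative terms (indexed by $(k-1)$-subsets $\bn'' \subset \bn$, with exponent set $\bn'' \cup \{M\}$). Your ``key combinatorial point''---that each $\bn''$ has $N-k+1$ completions in $\bn$---does not by itself give the inequality, because each positive term indexed by $\bn'$ is claimed by $k$ different negative terms (one for each element of $\bn'$ that could be swapped for $M$), so any na\"ive matching double-counts. To make this work you would need, for each $\bn''$, an inequality of the form
\[
(\det \bu^{\circ(\bn''\cup\{M\})})^2 \ \leq \ \cC \sum_{n_l \in \bn \setminus \bn''} \alpha_{\bn'',l}\, c_l\, (\det \bu^{\circ(\bn''\cup\{n_l\})})^2
\]
with weights satisfying $\sum_{\bn'' : n_l \notin \bn''} \alpha_{\bn'',l} \leq 1$ for every $l$. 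Theorem~\ref{Tkt} only supplies termwise ratio bounds $(\det \bu^{\circ(\bn''\cup\{M\})})^2 / (\det \bu^{\circ(\bn''\cup\{n_l\})})^2 \leq \rho^{M-n_l} V(\bn''\cup\{M\})^2 / V(\bn''\cup\{n_l\})^2$, and your sketch gives no indication of how to assemble these into an inequality governed by the specific constant $\cC = \cC_N$, which is calibrated to the full size-$N$ problem rather than to any particular $\bn''$. Carrying this out would amount to re-deriving the paper's inequality $\cC_m < \cC$ by a substantially harder path. A secondary concern: your appeal to \cite[(3.16)]{BGKP-fixeddim} for the general-rank reduction uses a $\bu$ from Theorem~\ref{Tdom} rather than one built from a row of $C$, so you should verify that the integrand in that identity still lands in the domain where \cite[(3.7)]{BGKP-fixeddim} applies.
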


\editAB{To summarize the preceeding material in} plain language,
the main picture adapted to the trivial group $G = \{ 1 \}$ is the following:
a \editAB{real symmetric} matrix $A = ( a_{i j} )_{i, j = 1}^N$ \editAB{respects the
block structure} associated to a partition $\pi = \{ I_1, \ldots, I_m\}$ if the
\editAB{entry $a_{i j}$ is independent of} $i$, $j \in I_k$ for some $k$.
The compression map collapses each cell $I_k$ to a single  entry, projecting the \editAB{matrix}~$A$ to the \editAB{$m \times m$ matrix with entries given by the constant} values along the fibres of the
projection map. The reverse inflation map restores the repetitions
of matrix entries in $A$. \editAB{These are linear, mutually inverse maps that preserve rank,
positive semidefiniteness, and the entrywise product.}

With these tools at hand, we proceed.

\begin{proof}[Proof of Theorem~\ref{Tstrata-rank}]
\editAB{As} $f$ is equal to $h$ when $c' = 0$, we need only
consider $f[ A ]$ and~$g[ A ]$. For convenience, we let $\pi := \pi( A )$.

\editAB{Suppose $A = a \one{N}$ for some $a \geq 0$. Then
$g( a ) > 0$ if $a > 0$,} by the last part of the proof of 
Theorem~\ref{Tlmi}, \editAB{and $g( 0 ) = c_0 > 1$ when
$A = \bzero_{N \times N}$. Since $f( a ) \geq g( a )$,
the rank-one case is established.}

Next, we note that $\pi = \pi_\vee$ if and only if the rows of $A$ are
distinct, so the result follows from Theorem~\ref{Tlmi} in this case.

\editAB{Otherwise, we suppose} that $m := | \pi |$ is strictly between
$1$ and $N$. \editAB{The matrix $g[ A ]$ is positive semidefinite,
by Theorem~\ref{Tthreshold}, and therefore, if
$B := \down( A )$, so is $g[ B ] = \down\bigl( g[ A ] \bigr)$,
by Theorem~\ref{Tentrywise}.
If $g[ B ]$ is positive definite then
so is $f[ B ]$, since $f[ B ] \geq g[ B ]$, and therefore
both of these matrices have rank $m$. Another application of
Theorem~\ref{Tentrywise} then gives that the matrices
$g[ A ] = \up\bigl( g[ B ] \bigr)$
and $f[ A ] = \up\bigl( g[ A ] \bigr)$ have rank $m$, as
required.}

It thus remains to show that $g[ B ]$ is positive definite. For this, we
will use Lemma~\ref{Llmi} with
$C = h[ B ]$ and $D = \cC_m^{-1} B^{\circ M}$ for a suitable 
positive scalar $\cC_m$. The Schur product theorem gives that
\editAB{$C$ and $D$ are both positive semidefinite.
Furthermore, as $B$ has distinct rows and $n_0 = 0$ if
$B$ has a zero row, Theorem~\ref{Tlmi}(1) gives that
\[
C_0 := \sum_{j = 0}^{m - 1} c_j B^{\circ n_j}
\]}
is positive definite.
\editAB{Since $h[ B ] = C \geq C_0$, we have that $h[ B ]$ is positive
definite as well. We now let
\[
h_m( z ) := \sum_{j = 0}^{m - 1} c_{N - m + j} z^j
\]
and let $\cC_m$ equal $\cC$ as in (\ref{DC}) but with $N$ replaced with $m$,
$\bn = ( 0, \ldots, m - 1)$, $M$ replaced by $M - n_0 - N + m$
and $\bc = ( c_{N - m}, \ldots, c_{N - 1} )$, so that
\[
\cC_m = \sum_{j = 0}^{m - 1}
{M - N + m \choose j}^2 {M - N + m - j - 1 \choose m - j - 1}^2 %
\frac{\rho^{M - n_0 - N + m - j}}{c_{N - m + j}}.
\]
By Corollary~\ref{Clmi}},
\[
h_m[ B ] \geq \cC_m^{-1} B^{\circ ( M - n_0 - N + m )}
\]
\editAB{and therefore,} by the Schur product theorem, we have that
\begin{equation}
C = h[ B ] \geq B^{\circ ( n_0 + N - m)} \circ h_m[ B ] \geq %
\cC_m^{-1} B^{\circ M} = D.
\end{equation}
Moreover, $\cC_m < \cC_{m + 1} \leq \cC_N$,
\editAB{where the} constant $\cC_N$
is precisely \editAB{$\cC$ as in (\ref{DC})}, 
\editAB{since
\[
{M - N + m + 1 \choose j + 1} {M - N + m \choose j}^{-1} = %
\frac{M - N + m + 1}{j + 1} > 1 \qquad \textrm{for } j = 0, \ldots, m - 1.
\]
Hence}
\[
g[ B ] = h[ B ] - \cC_N^{-1} B^{\circ M} = C - t D,
\]
where $t = \cC_m / \cC_N \in ( 0, 1 )$. By Lemma~\ref{Llmi}, this has the
same rank as \editAB{$C = h[ B ]$,} which was shown above to be positive definite.
This completes the proof.
\end{proof}

As \editAB{in the previous section, there is an analogue of}
Theorem~\ref{Tstrata-rank} \editAB{that holds in the other cases
set out in Definition~\ref{Dsetup}, in the same way that Theorem~\ref{Tlmi}
becomes Theorem~\ref{Tlmi2}.}

\begin{theorem}
\editAB{Let $f$ and $\cP_0$ be as in Definition~\ref{Dsetup}(1--3)
and suppose $c_0$, \ldots, $c_{N - 1} > 0$ and $c' > -\cC^{-1}$,
where $\cC$ is as in (\ref{DC}). Let $A \in \cP_0$ and suppose $n_0 = 0$
if $A$ has a zero row.}
The conclusions of Theorem~\ref{Tstrata-rank} hold once again.
\end{theorem}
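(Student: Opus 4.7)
The plan is to mimic the proof of Theorem~\ref{Tstrata-rank}, using Theorem~\ref{Tlmi2} in place of Theorem~\ref{Tlmi}. For Definition~\ref{Dsetup}(1) and~(2), every $A \in \cP_0$ has rank at most one, so $A = \bu \bu^T$ and the compression $B = \down( A ) = \bv \bv^T$ is a rank-one $m \times m$ matrix, with $m := | \pi( A ) |$ and $\bv$ having distinct entries. The equivalence of conditions~(b) and~(c) in Theorem~\ref{Tlmi2}(1), which is independent of $c'$, applied in dimension~$m$ to the $m$-term polynomial $h_{\mathrm{sub}}( z ) := \sum_{j = 0}^{m - 1} c_j z^{n_j}$, gives that $h_{\mathrm{sub}}[ B ]$, and therefore $h[ B ] \geq h_{\mathrm{sub}}[ B ]$, is positive definite. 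Compressing Corollary~\ref{Clmi} yields $h[ B ] \geq \cC^{-1} B^{\circ M}$; combining this with Lemma~\ref{Llmi} (and the trivial case $c' \geq 0$) shows that $f[ B ]$ has rank~$m$. As Theorem~\ref{Tentrywise} preserves rank under inflation, the stated rank identities for $f[ A ]$ and $h[ A ]$ follow, and Theorem~\ref{Tlmi2}(2) handles $g[ A ]$ analogously under its extra hypothesis.

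For case~(3), I would proceed exactly as for Theorem~\ref{Tstrata-rank}. The scalar case and the case $\pi( A ) = \pi_\vee$ are immediate from Theorem~\ref{Tlmi2}. Otherwise $B := \down( A )$ is a positive semidefinite $m \times m$ matrix with distinct rows, and the arguments above (without modification) show that $h[ B ]$ and $f[ B ]$ are positive definite.

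The main obstacle, as in the proof of Theorem~\ref{Tstrata-rank}, is the positive definiteness of $g[ B ] = h[ B ] - \cC^{-1} B^{\circ M}$. I would again aim to produce an auxiliary constant $\cC_m < \cC$ with $h[ B ] \geq \cC_m^{-1} B^{\circ M}$, whereupon Lemma~\ref{Llmi} with $t := \cC_m / \cC \in ( 0, 1 )$ would finish the argument. The proof of Theorem~\ref{Tstrata-rank} constructs such a $\cC_m$ via an auxiliary polynomial with powers $0, 1, \ldots, m - 1$, exploiting the relation $n_j = n_0 + j$ to identify $B^{\circ( n_0 + N - m )} \circ h_m[ B ]$ with the last $m$ terms of $h[ B ]$; this identification may fail in case~(3), where the shifted quantities $n_{N - m + j} - n_0 - N + m$ are not guaranteed to be non-negative when the original powers cluster in $[ N - 1, \infty )$. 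I would attempt to circumvent this by choosing a sub-polynomial adapted to the given powers — say the $m$ largest of the $n_j$ together with $c' z^M$ — and proving the required strict inequality $\cC_m < \cC$ either by an inductive argument on $N - m$ as in the original proof, or by appealing to the Rayleigh-quotient interpretation of $\cC$ from Section~\ref{Scont}, which should give that the supremum defining $\cC_m$ in dimension~$m$ is strictly smaller than that defining~$\cC$ in dimension~$N$. Navigating this comparison without the combinatorial shortcut of case~(4) is the principal technical hurdle I anticipate.
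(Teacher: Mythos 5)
Your reduction is the right one and matches the paper's strategy: compress to $B = \down(A)$, handle $h[B]$ and $f[B]$ via Theorem~\ref{Tlmi2}, Corollary~\ref{Clmi} and Lemma~\ref{Llmi}, and identify the positive definiteness of $g[B]$ (for $1 < m < N$) as the crux, to be settled by producing a constant $\cC_m < \cC$ with $h[B] \geq \cC_m^{-1} B^{\circ M}$. Your proposed sub-polynomial --- the $m$ largest exponents with their coefficients $c_{N-m}, \ldots, c_{N-1}$ and the same $M$ --- is exactly the paper's choice (note it should not include the $c' z^M$ term: $\cC_m$ is the sharp constant from~(\ref{DC}) for the positive-coefficient terms alone). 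But the step you defer, the strict inequality $\cC_m < \cC$, is precisely the content of the proof and is left unproven; as written the argument has a genuine gap. Your two suggested routes are also not what is needed: no induction on $N - m$ and no Rayleigh-quotient argument is required, because the inequality follows from a direct termwise comparison. Writing
\[
\cC = \sum_{j=0}^{N-1} \frac{V(\bn_j)^2}{V(\bn)^2}\,\frac{\rho^{M-n_j}}{c_j},
\qquad
\cC_m = \sum_{j=N-m}^{N-1} \Bigl(\prod_{\substack{N-m \leq k \leq N-1 \\ k \neq j}} \frac{M-n_k}{n_j-n_k}\Bigr)^{\!2}\,\frac{\rho^{M-n_j}}{c_j},
\]
and recalling that $V(\bn_j)/V(\bn) = \prod_{k \neq j}(M-n_k)/(n_j-n_k)$ in absolute value, the coefficient of $\rho^{M-n_j}/c_j$ in $\cC$ exceeds the corresponding coefficient in $\cC_m$ by the factor $\prod_{k=0}^{N-m-1}\bigl((M-n_k)/(n_j-n_k)\bigr)^2 > 1$, since $M > n_j > n_k$ whenever $j \geq N-m$ and $k \leq N-m-1$; discarding the remaining (positive) summands of $\cC$ with $j < N-m$ gives $\cC_m < \cC$, and Lemma~\ref{Llmi} with $t = \cC_m/\cC \in (0,1)$ then finishes exactly as you describe.

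One secondary imprecision: for settings (1)--(2) you dismiss the $g[A]$ case with ``Theorem~\ref{Tlmi2}(2) handles $g[A]$ analogously under its extra hypothesis,'' but the conclusion $\rk g[A] = |\pi(A)|$ for $A \notin \stratum{\pi_\vee}$ carries no such hypothesis. This happens to be harmless there because $B$ is rank one with a generating vector of distinct entries, so $B$ does have a row with distinct entries (and a zero entry in it forces a zero row of $A$, whence $n_0 = 0$); but you should say this, since in setting (3) the compression $B$ need not have any row with distinct entries --- which is exactly why the $\cC_m < \cC$ route is unavoidable there. Finally, in setting (3) the Loewner comparisons $h[B] \geq h_m[B] \geq \cC_m^{-1} B^{\circ M}$ require the FitzGerald--Horn result that entrywise powers in $\Z_+ \cup [N-2,\infty)$ preserve positive semidefiniteness, which deserves explicit mention.
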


\begin{proof}
The proof \editAB{proceeds in the same manner as}
that of Theorem~\ref{Tstrata-rank}, \editAB{with appeals to
Theorem~\ref{Tlmi}} replaced by employing Theorem~\ref{Tlmi2}
\editAB{in its place. As there, it suffices to assume that $m = | \pi |$
is strictly between $1$ and $N$, and show} that the positive
semidefinite matrix $g[ B ]$ is in fact positive definite,
where $B = \down(A)$. Here are the steps of the proof, modified to work
for Settings (1)--(3) in Definition~\ref{Dsetup}.

\editAB{To see that $C := h[ B ]$ and $D := \cC_m B^{\circ M}$ are
positive semidefinite, where $\cC_m$ is a positive constant to be determined,
we use the result of FitzGerald and Horn \cite{FitzHorn} that
the function~$x \mapsto x^\alpha$ acts entrywise to
preserve positive semidefiniteness}
on $N \times N$ real matrices with positive entries whenever
$\alpha \in \Z_+ \cup [ N - 2, \infty )$. 
\editAB{As above, the matrix $C_0$ is 
positive definite, now} by Theorem~\ref{Tlmi2}(1), and hence so is $C$.

\editAB{We now let}
\[
h_m( z ) := \sum_{j = N - m}^{N - 1} c_j z^{n_j}
\]
\editAB{and take $\cC_m$ to be as in (\ref{DC}) with $N = m$,
$\bn = ( n_{N - m}, \ldots, n_{N - 1} )$, $M$ unchanged and
$\bc = ( c_{N - m}, \ldots, c_{N - 1} )$.}
Once again using the result from \cite{FitzHorn},
\editAB{together with Corolllary~\ref{Clmi},} we have that
\[
C := h_N[ B ] \geq h_m[ B ] \geq \cC_m^{-1} B^{\circ m} =: D.
\]
We now claim that $\cC_m < \cC_N$; given this,
the proof is then completed as for Theorem~\ref{Tstrata-rank}.

To show this claim, we note that
\[
\cC_N = \sum_{j = 0}^{N - 1} b_j^2 \frac{\rho^{M - n_j}}{c_j} %
\qquad \text{and} \qquad %
\cC_m = \sum_{j = N - m}^{N - 1} a_j^2 \frac{\rho^{M - n_j}}{c_j},
\]
\editAB{where
\[
b_j = \prod_{k \in \{ 0, \ldots, N - 1 \} \setminus \{ j \}} %
\Bigl( \frac{M - n_k}{n_j - n_k} \Bigr)^2 \quad \textrm{and} \quad %
a_j = \prod_{k \in \{ N - m, \ldots, N - 1 \} \setminus \{ j \}} %
\Bigl( \frac{M - n_k}{n_j - n_k} \Bigr)^2
\]}
Hence
\[
\cC_N - \cC_m \geq \sum_{j=N-m}^{N-1} (b_j^2 - a_j^2) \frac{\rho^{M -
n_j}}{c_j},
\]
so it suffices to show that $b_j^2 > a_j^2$ for $j \geq N - m$. This
holds because
\[
\frac{b_j^2}{a_j^2} = \prod_{k = 0}^{N - m - 1} %
\Bigl( \frac{M - n_k}{n_j - n_k} \Bigr)^2 > 1
\]
\editAB{since} $M > n_j > n_k$ for $j \geq N - m$.
\end{proof}
%}}}

%{{{1 Section 4 - Continuity of the Rayleigh quotient on strata
\section{Continuity of the Rayleigh quotient on strata}\label{Scont}

As well as its relevance for calculating the rank, as seen in
Section~\ref{Srank}, it was shown in~\cite{BGKP-fixeddim}
that the constant-block stratification of Proposition~\ref{Pexists} plays
a crucial role in studying \editAB{the following} Rayleigh quotient:
\begin{equation}\label{ErayleighIntro}
R = R( A, \bu, \bc, M ) := \frac{\bu^* A^{\circ M} \bu}%
{\bu^* ( c_0 A^{\circ n_0} + c_1 A^{\circ n_1} + \cdots + %
c_{N - 1} A^{\circ n_{N - 1}} ) \bu}.
\end{equation}

This Rayleigh quotient is connected to the \editAB{isogenic}
stratification of the cone $\cP_n(\C)$, and this theme was
\editAB{developed} in \cite[Sections~4 and~5]{BGKP-fixeddim}
(for consecutive non-negative integer
exponents) and later in \cite[Section~11]{KT} (for more general exponents).

The optimisation of~(\ref{ErayleighIntro}) gives an \editAB{alternative}
approach for establishing Theorem~\ref{Tthreshold}. Namely,
\editAB{if the coefficients $c_0$, \ldots, $c_{N - 1}$ are positive
and the exponents $n_0$, \ldots, $n_{N - 1}$ are non-negative then,
given any $A \in \cP_N\bigl( ( 0, \rho ) \bigr)$, or
$A \in \cP_N( \C )$ if the exponents are integral,}
there exists a constant $\cC' \geq 0$ such that
\[
A^{\circ M} \leq \cC' \sum_{j = 0}^{N - 1} c_j A^{\circ n_j} = %
\cC' h[ A ]
\]
The smallest such constant $\cC_R = \cC_R( A, h, M )$ may be regarded
as a Rayleigh quotient, and it was shown in
\editAB{\cite[Remark~4.6]{BGKP-fixeddim}
and \cite[Proposition~11.1]{KT}}
that
\begin{equation}\label{Erayleigh}
\cC_R = %
\varrho( h[ A ]^{\dagger / 2} A^{\circ M} h[ A ]^{\dagger / 2} ),
\end{equation}
where $B ^{\dagger / 2} := ( B^\dagger )^{1 / 2}$
\editAB{for any square matrix $B$, with $B^\dagger$ the
Moore--Penrose pseudo-inverse of $B$,}
and $\varrho( \cdot )$ denotes the spectral radius.

\editAB{If} $A = \bu \bu^T$ for a vector
$\bu \in ( 0, \infty )^N_{\neq}$ then $h[ \bu \bu^T ]$ is invertible,
\editAB{since} the generalized Vandermonde matrix $\bu^{\circ \bn}$ is,
and
\[
\cC_R = %
( \bu^{\circ M} )^T h[ \bu \bu^T ]^{-1} \bu^{\circ M} = %
\sum_{j = 0}^{N - 1} %
\frac{( \det \bu^{\circ \bn_j} )^2}{c_j ( \det \bu^{\circ \bn} )^2};
\]
\editAB{see \cite[Corollary~4.5]{BGKP-fixeddim} and
\cite[Proposition~11.2]{KT}.}
This explains the connection to the
sharp threshold in Theorem~\ref{Tthreshold}.

We recall from \cite{BGKP-fixeddim, KT} that an alternate approach to
proving Theorem~\ref{Tthreshold} is to find the maximum of the
bound~(\ref{Erayleigh}) over all $A$
\editAB{in the relevant test set~$\cP_0$.}
The difficulty with this approach lies in the fact that the Rayleigh-quotient
map is not continuous when \editAB{crossing} strata.

\editAB{Our focus in this section is} on the  bound~(\ref{Erayleigh}) for
a single matrix $A$. We are not concerned with the radius $\rho$
\editAB{that appeared previously and}
we do not insist that $M > n_{N-1}$, only that $M > n_0 = 0$.
\editAB{In this setting we obtain} continuity of the Rayleigh quotient on each individual stratum.

\begin{theorem}\label{Textremecriticalvaluecontinuity}
\editAB{Let $h( z ) = \sum_{j = 0}^{N - 1} c_j z^{n_j}$,
where $N \geq 1$, the coefficients $c_0$, \ldots, $c_{N - 1}$
are positive and the exponents
$n_0$, \ldots, $n_{N - 1} \in \Z_+ \cup [ N - 1, \infty )$
are distinct, with $n_0 = 0$.
Fix~$M > 0$ and let $\cP_0 := \cP_N( \C )$
if the exponents $n_0$, \ldots, $n_{N - 1}$
and $M$ are integers and otherwise let
$\cP_0 := \cP_N\bigl( [ 0, \infty ) \bigr)$.}
The map $A \mapsto \cC_R( A, h, M )$ is continuous
on~$\cP_0 \cap \stratum{\pi}$
for any partition $\pi \in \Pi_N$.
\end{theorem}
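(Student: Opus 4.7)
The plan is to reduce the computation of $\cC_R( A, h, M )$ on the stratum $\stratum{\pi}$ to an analogous computation for the compressed matrix $B := \down( A )$, which lies in $\cP_m$ for $m := | \pi |$. Writing $\pi = \{ I_1, \ldots, I_m \}$, I introduce the block-indicator matrix $U \in \{ 0, 1 \}^{N \times m}$, defined by $U_{k i} = 1$ if and only if $k \in I_i$, and one verifies directly that $A = U B U^T$, $h[ A ] = U h[ B ] U^T$, and $A^{\circ M} = U B^{\circ M} U^T$. The key structural observation is that the hypothesis $\pi( A ) = \pi$ forces the rows of $B$ to be distinct: if two rows of $B$ coincided, then the corresponding row-blocks of $A$ would be identical, so $\pi$ could be strictly coarsened, contradicting its minimality.

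Since $B$ is positive semidefinite (by Theorem~\ref{Tentrywise}) and has distinct rows, the implication (b) $\Rightarrow$ (c) of Theorem~\ref{Tlmi2}(1) applied to $B$ as an $m \times m$ matrix with $n_0 = 0$ yields that $h[ B ]$ is positive definite; the exponents satisfy $n_j \in \Z_+ \cup [ m - 1, \infty )$ because $m \leq N$. Because $U$ has full column rank~$m$, its Moore--Penrose pseudo-inverse equals $U^\dagger = ( U^T U )^{-1} U^T$, from which one verifies that $h[ A ]^\dagger = ( U^\dagger )^T h[ B ]^{-1} U^\dagger$. A thin QR factorisation $U = Q R$, together with the fact that $\tilde{U} X \tilde{U}^T$ and $X$ have the same non-zero spectrum whenever $\tilde{U}$ has orthonormal columns, then yields the identification
\[
\cC_R( A, h, M ) =
\varrho\bigl( h[ A ]^{\dagger/2} A^{\circ M} h[ A ]^{\dagger/2} \bigr) =
\varrho\bigl( h[ B ]^{-1/2} B^{\circ M} h[ B ]^{-1/2} \bigr).
\]

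Continuity now follows by composition: the compression map $A \mapsto B$ is linear and so continuous; the maps $B \mapsto h[ B ]$ and $B \mapsto B^{\circ M}$ are continuous on $\cP_m$ (polynomial when the exponents are integers, and entrywise real powers of non-negative numbers otherwise); the set of positive-definite $m \times m$ matrices is open, and on it the assignment $H \mapsto H^{-1/2}$ is continuous; finally, the spectral radius is a continuous function of matrix entries. The main obstacle to address carefully is the second step, namely the positive definiteness of $h[ B ]$ for every $A \in \cP_0 \cap \stratum{\pi}$; this is exactly the enhanced positivity proved in Section~\ref{Sstrict}. Without this full-rank reduction of $h[ A ]$ within the stratum, the pseudo-inverse $h[ A ]^\dagger$ would fail to vary continuously, which is precisely the obstruction to the continuity of~$\cC_R$ across strata alluded to in the discussion preceding the theorem.
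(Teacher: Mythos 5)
Your argument is correct and rests on the same two pillars as the paper's proof: compress $A$ to $B = \down( A )$, observe that minimality of $\pi$ forces $B$ to have distinct rows, invoke the strict positivity results of Section~\ref{Sstrict} to conclude that $h[ B ]$ is positive definite, and then realise $\cC_R$ as a composition of operations that are continuous in $B$, hence in $A$. Where you genuinely diverge is in the transfer step: the paper uses the weighted compression--inflation maps $\ccdown$ and $\ccup$ of Theorem~\ref{Tfunctional} as a black box converting pseudo-inversion and the square-root functional calculus on $\overline{\stratum{\pi}}$ into ordinary invertible matrix calculus at size $m$, whereas you re-derive exactly the piece of that machinery you need by hand, via the block-indicator matrix $U$ (so that $A = U B U^T$, $h[ A ] = U h[ B ] U^T$ and $U^T U = D_\pi$), the explicit Moore--Penrose formula for $h[ A ]^\dagger$, and a QR/similarity argument; this yields the explicit identity $\cC_R( A, h, M ) = \varrho\bigl( h[ B ]^{-1/2} B^{\circ M} h[ B ]^{-1/2} \bigr)$ on the stratum, which is a slightly cleaner endpoint than the paper's $\ccup$-conjugated expression, at the cost of reproving a special case of Theorem~\ref{Tfunctional} (your pseudo-inverse and spectral computations do check out, using $\varrho( X Y ) = \varrho( Y X )$ and the invertibility of the triangular factor). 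One citation should be adjusted: when the exponents are integers the test set is $\cP_N( \C )$, so $B$ may have complex or negative entries, and the positive definiteness of $h[ B ]$ must then be drawn from Theorem~\ref{Tlmi}(1) rather than Theorem~\ref{Tlmi2}(1), whose settings from Definition~\ref{Dsetup}(1--3) are real and entrywise non-negative; the paper's own proof appeals to both theorems at this point.
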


The proof employs weighted variants of the inflation and compression
operators used in Section~\ref{Srank}
\editAB{that were introduced in} \cite{BGKP-strata}.

\begin{definition}[\cite{BGKP-strata}]\label{Dweighted}
\editAB{Given a partition $\pi = \{ I_1, \ldots, I_m \} \in \Pi_N$,
where $N \geq 2$, we
use} the diagonal matrix
$D_\pi := \diag\bigl( | I_1 |, \ldots, | I_m | \bigr)$
\editAB{to define} the linear operators
\begin{align*}
\ccdown : \C^{N \times N} \to \C^{m \times m}; \ %
& A \mapsto D_\pi^{1 / 2} \down( A ) D_\pi^{1 / 2} \\[1ex]
\text{and} \quad %
\ccup : \C^{m \times m} \to \C^{N \times N}; \ %
& B \mapsto \up( D_\pi^{-1 / 2} B D_\pi^{-1 / 2} ).
\end{align*}
\end{definition}

Just as $\up$ and $\down$ work well with the entrywise calculus, the maps
$\ccup$ and $\ccdown$ are well behaved with respect to the functional
calculus, \editAB{as the following result demonstrates.}

\begin{theorem}[{\cite[Theorem~5.2]{BGKP-strata}}]\label{Tfunctional}
The maps $\ccdown$ and $\ccup$ are mutually inverse, rank-preserving
isomorphisms between the unital $*$-algebras
$\overline{\stratum{\pi}}$ and $\C^{m \times m}$ equipped with the
usual matrix multiplication. Moreover,\editAB{a matrix}
$A \in \overline{\stratum{\pi}}$
is positive semidefinite if and only if $\ccdown(A)$ is.
\end{theorem}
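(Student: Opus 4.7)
The plan is to lift Theorem~\ref{Tentrywise}, which realizes the isomorphism of \emph{entrywise}-product $*$-algebras between $\overline{\stratum{\pi}}$ and $\C^{m \times m}$, to an isomorphism with respect to the ordinary matrix product. The role of the diagonal weight $D_\pi^{1/2}$ is precisely to absorb the block-size factors $|I_k|$ that appear when forming matrix products inside $\overline{\stratum{\pi}}$.

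That $\ccup$ and $\ccdown$ are mutually inverse follows immediately on substituting into the identities $\up \circ \down = \Id$ on $\overline{\stratum{\pi}}$ and $\down \circ \up = \Id$ on $\C^{m \times m}$, since the inner weights cancel via $D_\pi^{1/2} D_\pi^{-1/2} = \Id_m$. Rank preservation is inherited from $\down$ via conjugation by the invertible matrix $D_\pi^{1/2}$, and preservation of the $*$-operation is similarly automatic, since $\down$ commutes with conjugate transpose (it averages entries) and $D_\pi^{1/2}$ is real diagonal.

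The crux of the argument is multiplicativity. By Proposition~\ref{Pexists}, any $A \in \overline{\stratum{\pi}}$ has a block decomposition $A_{I_i \times I_j} = c_{ij} \bone_{|I_i| \times |I_j|}$; writing $B$ similarly with scalars $d_{ij}$ and using the elementary identity $\bone_{p \times q} \bone_{q \times r} = q \, \bone_{p \times r}$, a block computation yields
\begin{equation*}
(AB)_{I_i \times I_j} = \sum_{k = 1}^m c_{ik} d_{kj} |I_k| \, \bone_{|I_i| \times |I_j|}.
\end{equation*}
Therefore $\down(AB) = \down(A) \, D_\pi \, \down(B)$, and inserting the weights produces
\begin{equation*}
\ccdown(AB) = D_\pi^{1/2} \down(A) D_\pi \down(B) D_\pi^{1/2} = \ccdown(A) \, \ccdown(B),
\end{equation*}
as required. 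In particular $\overline{\stratum{\pi}}$ is closed under matrix multiplication, and since $\ccdown$ is a bijective algebra homomorphism onto $\C^{m \times m}$, its unit is forced to be $\ccup(\Id_m) = \up(D_\pi^{-1})$, which can also be checked directly by the same block calculation.

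For the positive-semidefiniteness equivalence, write $A = \ccup(B)$ and let $C := D_\pi^{-1/2} B D_\pi^{-1/2}$ have entries $c_{ij}$, so that $A_{I_i \times I_j} = c_{ij} \bone_{|I_i| \times |I_j|}$. Partitioning any $x \in \C^N$ as $(x_{I_1}, \ldots, x_{I_m})$, a short computation gives
\begin{equation*}
x^* A x = \sum_{i, j = 1}^m c_{ij} \overline{s_i} s_j, \qquad \textrm{where} \qquad s_i := \sum_{k \in I_i} x_k.
\end{equation*}
As $x$ varies over $\C^N$ the vector $s = (s_1, \ldots, s_m)^T$ covers all of $\C^m$, so $A \geq 0$ if and only if $C \geq 0$, which is in turn equivalent to $\ccdown(A) = D_\pi^{1/2} C D_\pi^{1/2} \geq 0$ because conjugation by the invertible diagonal $D_\pi^{1/2}$ preserves positive semidefiniteness. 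The only point needing genuine care, rather than mechanical bookkeeping, is isolating the factor $|I_k|$ in the block product and recognizing that the definition of $\ccdown$ is precisely calibrated to absorb it.
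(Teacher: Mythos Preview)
The paper does not actually prove this theorem: it is quoted verbatim from \cite[Theorem~5.2]{BGKP-strata} and used as a black box in the proof of Theorem~\ref{Textremecriticalvaluecontinuity}. So there is no ``paper's own proof'' to compare against.

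That said, your argument is correct and self-contained. The block computation $(AB)_{I_i \times I_j} = \sum_k c_{ik} d_{kj} |I_k|\,\bone_{|I_i|\times|I_j|}$ is exactly the point, and you correctly identify that the conjugation by $D_\pi^{1/2}$ is what converts the twisted product $\down(AB) = \down(A)\,D_\pi\,\down(B)$ into genuine multiplicativity for $\ccdown$. Your identification of the unit as $\ccup(\Id_m) = \up(D_\pi^{-1})$ is also right (and worth noting, since $\Id_N$ itself does not lie in $\overline{\stratum{\pi}}$ unless $\pi = \pi_\vee$). For the positive-semidefiniteness equivalence you give a direct quadratic-form computation; a marginally shorter route would be to invoke the corresponding clause of Theorem~\ref{Tentrywise} and then observe that conjugation by the invertible $D_\pi^{1/2}$ preserves positivity, but your approach is perfectly valid and arguably more transparent.
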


With these preliminaries at hand, we proceed.

\begin{proof}[Proof of Theorem~\ref{Textremecriticalvaluecontinuity}]
Suppose $A \in \cP_0 \cap \stratum{\pi}$ and let
$H := h[ A ]$ for brevity. \editAB{As $A$ is positive semidefinite, so
is $B = \down( A )$, which has distinct rows by construction.
We have that~$h[ B ]$ has no zero row, since $n_0 = 0$,
so either} Theorem~\ref{Tlmi} or Theorem~\ref{Tlmi2}
\editAB{implies that} $\down( H ) = h[ B ]$ is positive definite,
\editAB{where this identity holds} by Theorem~\ref{Tentrywise}.
\editAB{Hence the matrix $\down( H )$ has full rank, and therefore
so does} $\ccdown( H ) = \ccdown( \up( \down( H ) ) )$,
\editAB{by Theorems~\ref{Tentrywise} and~\ref{Tfunctional}.
The matrix $\ccdown( H )$ is therefore invertible, and}
\[
H^\dagger = \ccup( \ccdown( H )^\dagger ) = \ccup( \ccdown( H )^{-1} )
\]
by Theorem~\ref{Tfunctional}. Hence,
\[
H^{\dagger / 2} A^{\circ M} H^{\dagger / 2} = %
\ccup( \ccdown( H )^{-1 / 2} \ccdown( A^{\circ M} ) %
\ccdown( H )^{-1 / 2} ),
\]
and since all the operations $A \mapsto H = h[ A ]$,
$A \mapsto A^{\circ M}$, $B \mapsto B^{-1 / 2}$, $\ccdown$,
$\ccup$ and $\varrho( \cdot )$ are continuous, this gives the claim.
\end{proof}

We conclude with \editAB{two questions. A version of the first
was originally posed in} \cite{BGKP-fixeddim}.

\begin{question}\label{Rrayleigh}
When is the Rayleigh-quotient inequality an equality? More precisely,
\editAB{given $h( z ) = \sum_{j = 0}^{N - 1} c_j z^{n_j}$,
where $N \geq 1$, the coefficients $c_0$, \ldots, $c_{N - 1}$ are
positive and the exponents $n_0 < \cdots < n_{N - 1} < M$ lie in
$\Z_+ \cup [ N - 1 ,\infty )$, when is
$A \in \cP_N\bigl( [ 0, \rho ] \bigr)$ such that the inequality}
\[
\cC_R = %
\varrho( h[ A ]^{\dagger / 2} A^{\circ M} h[ A ]^{\dagger / 2} ) %
\leq \cC_V
\]
\editAB{is an equality, where $\cC_V = \cC$ as in (\ref{DC})? We see from
Theorems~\ref{Tlmi}(2) and \ref{Tlmi2}(2) that equality is not attained if
$A$ has a row with distinct entries, so lies in in the top
stratum~$\stratum{\pi_\vee}$ (and $n_0 = 0$ if
any entry in this row is zero), since
this implies that the matrix $g[ A ] = h[ A ] - \cC_V^{-1} A^{\circ M}$
is positive definite and $h[ A ] - \cC_R^{-1} A^{\circ M}$
is not, because
\[
\bu^* h[ A ]^{1 / 2} %
( \Id_N - \cC_R^{-1} h[ A ]^{-1 / 2} A^{\circ M} h[ A ]^{-1 / 2} ) %
h[ A ]^{1 / 2} \bu = 0
\]
if $\bu = h[ A ]^{-1 / 2} \bv$ and $\bv$ is an eigenvector
corresponding to the maximum eigenvalue
of~$h[ A ]^{-1 / 2} A^{\circ M} h[ A ]^{-1 / 2}$.}
\end{question}

For our next question, we first present another extension of
Theorem~\ref{Tlmi}. \editAB{This} result and its proof involve the linear
matrix inequality~(\ref{Elmi}), in which the matrix $A^{\circ M}$ is
bounded above by powers of lower order. When restricted to the
closure of a particular stratum, this inequality can be strengthened to
involve fewer terms.

\begin{proposition}\label{Pblocklmi}
\editAB{Let the partition $\pi \in \Pi_N$, where $N \geq 2$,
and suppose $\pi$ has $m$~blocks, where $m \geq 1$.
Suppose $c_0$, \ldots, $c_{m - 1}$ are positive and
$n_0$, \ldots, $n_{m - 1}$, $M \in \Z_+ \cup [ N - 1, \infty )$
are distinct, with $n_0 < \cdots < n_{m - 1} < M$.
Given any $\rho > 0$, we
let $\cP_0$ equal~$\cP_N\bigl( \overline{D}( 0, \rho ) \bigr)$
if $n_0$, \ldots, $n_{N - 1}$ and $M$ are integers
and $\cP_N\bigl( [ 0, \rho ] \bigr)$ otherwise.}
We have the bound
\begin{equation}\label{Elmi2}
A^{\circ M} \leq \cC_m \sum_{j =0 }^{m - 1} c_j A^{\circ n_j} %
\qquad \textrm{for all } A \in \cP_0 \cap \overline{\stratum{\pi}},
\end{equation}
\editAB{where $\cC_m$ equals $\cC$ as in (\ref{DC}) with
$\bc = ( c_0, \ldots, c_{m - 1} )$ and
$\bn = ( n_0, \ldots, n_{m - 1} )$.}
Equality is achieved if and only if either $m = 1$ and
$A = \rho \one{N}$, or $n_0 > 0$ and $A = \bzero_{N \times N}$.

Furthermore, if $\cC_m $ is replaced by any larger
constant, and $n_0 = 0$ if $A \in \cP_0 \cap \stratum{\pi}$
has a zero row, then the inequality~(\ref{Elmi2}) is strict for $A$
upon applying $\down$.
\end{proposition}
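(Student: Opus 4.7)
The plan is to reduce the $N \times N$ inequality on $\overline{\stratum{\pi}}$ to the corresponding $m \times m$ inequality for the compressed matrix, where Corollary~\ref{Clmi} applies directly. Given $A \in \cP_0 \cap \overline{\stratum{\pi}}$, set $B := \down( A )$, so that $A = \up( B )$ by Theorem~\ref{Tentrywise}. Since $\down$ takes arithmetic means of entries of $A$, the matrix $B$ has entries in the same convex domain as $A$, and $B$ is positive semidefinite by the \emph{moreover} clause of Theorem~\ref{Tentrywise}. The exponents $n_0, \ldots, n_{m - 1}, M$ automatically satisfy the required hypotheses at size $m$, since $\Z_+ \cup [ N - 1, \infty ) \subset \Z_+ \cup [ m - 1, \infty )$, so Corollary~\ref{Clmi} applied to $B$ with the polynomial $h_m( z ) := \sum_{j = 0}^{m - 1} c_j z^{n_j}$ delivers $B^{\circ M} \leq \cC_m h_m[ B ]$ in the Loewner order.

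To transfer this back to $A$, the crucial observation is that inflation preserves the Loewner order: if $C \geq D$ in $\C^{m \times m}$, then $\down( \up( C - D ) ) = C - D \geq 0$, whence $\up( C ) - \up( D ) = \up( C - D )$ is positive semidefinite by reapplying the \emph{moreover} clause of Theorem~\ref{Tentrywise}. Since $\up$ also commutes with entrywise powers, applying $\up$ to the $m \times m$ bound yields $A^{\circ M} \leq \cC_m h_m[ A ]$, which is~(\ref{Elmi2}). Conversely, if equality holds in~(\ref{Elmi2}) at $A$, then applying $\down$ gives equality $B^{\circ M} = \cC_m h_m[ B ]$; the final clause of Theorem~\ref{Tlmi} or Theorem~\ref{Tlmi2} at size $m$ then forces either $m = 1$ and $B = \rho$, so $A = \rho \one{N}$, or $n_0 > 0$ and $B = \bzero_{m \times m}$, so $A = \bzero_{N \times N}$.

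For the strict assertion, suppose $A \in \cP_0 \cap \stratum{\pi}$, so that $\pi( A ) = \pi$ is minimal. Then the rows of $B$ must be pairwise distinct: two equal rows of $B$ would allow us to merge the corresponding blocks of $\pi$ into a strictly coarser partition still adapted to $A$, contradicting minimality. Combined with the hypothesis that $n_0 = 0$ whenever $A$, and hence $B$, has a zero row, Theorem~\ref{Tlmi}(1) or Theorem~\ref{Tlmi2}(1) applied to $B$ at size $m$ with $c' = -( \cC_m + \varepsilon )^{-1} > -\cC_m^{-1}$ shows that the matrix $( \cC_m + \varepsilon ) h_m[ B ] - B^{\circ M}$ is positive definite for every $\varepsilon > 0$; this is exactly the strictness of the compressed inequality when $\cC_m$ is replaced by any larger constant. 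The only non-routine step is the Loewner-order preservation under $\up$ noted in the second paragraph; everything else is bookkeeping with theorems already in hand.
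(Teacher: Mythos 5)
Your proof is correct and follows essentially the same route as the paper: compress via $\down$ using Theorem~\ref{Tentrywise}, invoke Corollary~\ref{Clmi} and the final clauses and part~(1)(d) of Theorems~\ref{Tlmi} and~\ref{Tlmi2} at size $m$, and transfer back with $\up$. The only difference is that you spell out details the paper leaves implicit, such as the preservation of the Loewner order under inflation and the fact that $\down(A)$ has distinct rows when $A \in \stratum{\pi}$.
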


\begin{proof}
By Theorem~\ref{Tentrywise}, the maps $\up$ and $\down$ can be used to
transfer the setting to~\editAB{either}
$\cP_m\bigl( \overline{D}( 0, \rho ) \bigr)$ \editAB{or
$\cP_m\bigl( [ 0, \rho ] \bigr)$.}
The assertions then follow directly from their counterparts in
Corollary~\ref{Clmi} and Theorems~\ref{Tlmi} and~\ref{Tlmi2};
\editAB{the final statement holds by (1)(d) of each.}
\end{proof}

\begin{question}
An explicit expression for the supremum
of the function $A \mapsto \cC_R( A, h, M )$
on each stratum~$\stratum{\pi} \cap \cP_0$
is known for $\pi = \pi_\wedge$
\cite[Corollary~4.5]{BGKP-fixeddim}
and $\pi = \pi_\vee$
\cite{BGKP-fixeddim, KT}
\editAB{since~$\stratum{\pi_\vee}$ contains
all matrices of the form $A = \bu \bu^T$ where $\bu \in ( 0, \infty )^N$
has distinct coordinates, and so the supremum of $\cC_R( A, h, M )$
is at least, so exactly, $\cC$ from Theorem~\ref{Tthreshold}.}
A natural conjecture, supported by Proposition~\ref{Pblocklmi}, is that the
supremum depends only on the number of blocks in the partition $\pi$
\editAB{and not on any further data from $\pi$.}
\end{question}
%}}}

\subsection{Acknowledgements}
\editAB{A.B.~was partially supported by Lancaster University while
the intial phase of this work was carried out.}

D.G.~was partially supported by a University of Delaware Research
Foundation grant, by a Simons Foundation
collaboration grant for mathematicians, and by a University of
Delaware Research Foundation Strategic Initiative grant.

A.K.~was partially supported by the Ramanujan Fellowship SB/S2/RJN-121/2017,
MATRICS grant MTR/2017/000295, and SwarnaJayanti Fellowship grants
SB/SJF/2019-20/14 and DST/SJF/MS/2019/3 from SERB and DST (Govt.~of
India), grant F.510/25/CAS-II/2018(SAP-I) from UGC (Govt.~of India),
a Young Investigator Award from the Infosys Foundation,
a Shanti Swarup Bhatnagar Award from CSIR (Govt.\ of India), and
the DST FIST program 2021 [TPN--700661].

M.P~was supported by a  Simons Foundation collaboration grant for mathematicians.

\editAB{The authors thank the Institute for Advanced Study, Princeton
\editAB{and the American Institute of Mathematics, Pasadena}
for \editAB{their} hospitality while this work was concluded.}

\subsection{List of symbols}

We collect below some notation used throughout the text.

\begin{itemize}
\item $\overline{D}( 0, \rho )$ is the closed disc in $\C$ with radius
$\rho$ centered at the origin.

\item $\cP_N^k( I )$ is the set of positive semidefinite $N \times N$
matrices of rank at most $k$ with entries in the set $I \subset \C$.
Such matrices are necessarily Hermitian.

\item $\cP_N( I ) := \cP_N^N( I )$.

\item $\bone_{N \times N'}$ is the $N \times N'$ matrix with each entry
equal to~$1$.

\item $f[ A ]$ is the matrix obtained by applying the
function~$f$ to each of the entries of the matrix~$A$.

\item $A^{\circ \alpha}$ is the matrix obtained by taking the
$\alpha$th power of each of the entries of the matrix $A$,
whenever this is well defined.

\item $\bu^\alpha = ( u_i^\alpha )_{i = 1}^m$
for any real number $\alpha$ and column vector 
$\bu = ( u_i )_{i = 1}^m$ whenever the entries are well defined.

\item $\bu^{\circ \bn} = ( u_i^{n_j} )_{i, j = 1}^m$
for any column vector $\bu = ( u_i )_{i = 1}^m$ and
row vector $\bn = ( n_1, \ldots, n_m )$
whenever these quantities are well defined.

\item $V( \bu )$ is the Vandermonde determinant
of the column vector $\bu = ( u_i )_{i = 1}^m$
or the row vector $\bu = ( u_1, \ldots u_m )$, so that
$V( \bu) = \prod_{1 \leq k < l \leq m} ( u_l - u_k )$.

\item $S^N_{\neq}$ is the collection of all $N$-tuples in $S$
with distinct entries and $S^N_<$ the subset of $S^N_{\neq}$ consisting
of $N$-tuples with strictly increasing entries.

\item $A^\dagger$ is the Moore--Penrose
pseudo-inverse of the matrix~$A$.

\item $\varrho( A )$ is the spectral radius of the matrix $A$.

\item $( \Pi_N, \preccurlyeq )$ is the poset of partitions of
$\{ 1, \ldots, N \}$, where $\pi' \preccurlyeq \pi$ if~$\pi$ is a
refinement of~$\pi'$, so that every set in $\pi$ is a subset of
some set in $\pi'$.

\item $D_\pi$ is the $m \times m$ diagonal matrix with $( i , i)$
entry $| I_i| $, where $\pi = \{ I_1, \ldots, I_m \} \in \Pi_N$.

\item $\down$ and $\up$ are defined in Definition~\ref{Dcomp}.

\item $\ccdown$ and $\ccup$ are defined in Definition~\ref{Dweighted}.

\end{itemize}

%{{{1 Bibliography

%}}}

\end{document}